\newcommand{\norm}[1]{\left\lVert#1\right\rVert}
\newcommand{\abs}[1]{\lvert#1\rvert}
\newcommand{\lrabs}[1]{\left\lvert#1\right\rvert}
\newcommand{\lrbr}[2]{\left\langle#1,{#2}\right\rangle}
\newcommand{\dualdel}[3]{\sideset{_{#1^\ast}}{_{#1}}{\mathop{\left\langle{#2},{#3}\right\rangle}}}
\newcommand{\Ascr} {{\mathscr A}}
\newcommand{\Bscr} {{\mathscr B}}
\newcommand{\Escr} {{\mathscr E}}
\newcommand{\Fscr} {{\mathscr F}}
\newcommand{\Hscr} {{\mathscr H}}
\newcommand{\Wscr} {{\mathscr W}}
\renewcommand{\tilde}{\widetilde}
\renewcommand{\hat}{\widehat}
\renewcommand\a{\alpha}
\newcommand{\eps}{{\varepsilon}}
\renewcommand{\phi}{\varphi}
\renewcommand{\emptyset}{\varnothing}
\newcommand{\Z}{\mathbbm{Z}}
\newcommand{\R}{\mathbbm{R}}
\newcommand{\N}{\mathbbm{N}}
\renewcommand{\P}{\mathbbm{P}}
\newcommand{\E}{\mathbbm{E}}
\renewcommand{\div}{\operatorname{div}}
\newcommand{\Lip}{\operatorname{Lip}}
\newcommand{\rs}[1]{{\restriction}_{#1}}
\newcommand{\Id}{\operatorname{Id}}
\newcommand{\sgn}{\operatorname{sgn}}
\renewcommand{\limsup}{\varlimsup}
\renewcommand{\liminf}{\varliminf}
\newcommand{\BIGOP}[1]{\mathop{\mathchoice%
{\raise-0.22em\hbox{\huge $#1$}}%
{\raise-0.05em\hbox{\Large $#1$}}{\hbox{\large $#1$}}{#1}}}
\newcommand{\BIGboxplus}{\mathop{\mathchoice%
{\raise-0.35em\hbox{\huge $\boxplus$}}%
{\raise-0.15em\hbox{\Large $\boxplus$}}{\hbox{\large $\boxplus$}}{\boxplus}}}
\numberwithin{equation}{section}
\newtheorem{theorem}{Theorem}[section] 
\theoremstyle{plain}
\newtheorem{hypo}[theorem]{Hypothesis}
\newtheorem{defi}[theorem]{Definition}
\newtheorem{lem}[theorem]{Lemma}
\newtheorem{ex}[theorem]{Example}
\newtheorem{prop}[theorem]{Proposition}
\newtheorem{rem}[theorem]{Remark}
\newtheorem{thm}[theorem]{Theorem}
\newtheorem{cor}[theorem]{Corollary}
\newtheorem*{bem*}{Bemerkung}
\newcommand{\ol}{\overline}
\newcommand{\Sgn}{\operatorname{Sgn}}
\renewcommand{\subset}{\subseteq}
\renewcommand{\supset}{\supseteq}
\title{Multi-valued, singular stochastic evolution inclusions}
\author[B. Gess \& J. M. T\"olle]{Benjamin Gess and Jonas M. T\"olle}
\email{gess@math.tu-berlin.de}
\email{jonasmtoelle@gmail.com}
\address{Institut f\"ur Mathematik, Technische Universit\"at Berlin (MA 7-5)\\
Stra\ss{}e des 17. Juni 136, 10623 Berlin, Germany}
\thanks{{\bf Acknowledgements:} Financial support of the German Science Foundation (DFG), in particular of the IRTG 1132 ``Stochastics and Real World Models'', Beijing--Bielefeld and the Forschergruppe 718 ``Analysis and Stochastics in Complex Physical Systems'', Berlin--Leipzig is gratefully acknowledged.\\
The authors would like to thank Max von Renesse for inspiring discussions.}
\keywords{singular stochastic evolution equation, singular diffusion equation, total variation flow, logarithmic diffusion, plasma diffusion, fast diffusion equation, $p$-Laplacian equation, curve shortening flow, stochastic multi-valued evolution inclusion, ergodic semigroup, pathwise solutions.}
\subjclass[2010]{47D07, 60H15; 35R60}
\date{\today}
\begin{document}
\begin{abstract}
We provide an abstract variational existence and uniqueness result for multi-valued, monotone, non-coercive stochastic evolution inclusions in Hilbert spaces with general additive and Wiener multiplicative noise. As examples we discuss certain singular diffusion equations such as the stochastic $1$-Laplacian evolution (total variation flow) in all space dimensions and the stochastic singular fast diffusion equation. In case of additive Wiener noise we prove the existence of a unique weak-$*$ mean ergodic invariant measure.
\end{abstract}

\maketitle

\section{Introduction}

We consider the following evolution inclusion in a separable Hilbert space $H$
\begin{equation}\label{eqn:det:int}\begin{split}
  dX_t+A(t,X_t)\,dt&\ni\, dg_t,\quad t>0,\\
  X_0&=x.
\end{split}\end{equation}
Here $A$ is (among other assumptions) required to be a possibly multi-valued, singular, maximal monotone operator and $g$ is a c\`adl\`ag path in $H$. The meaning of the expression $dg_t$ will be specified below.

In particular, we are interested in stochastic evolution inclusions of the type
\begin{equation}\label{eqn:stoch_add}\begin{split}
  dX_t+A(t,X_t)\,dt&\ni dN_t,\quad t>0,\\
  X_0&=x,
\end{split}\end{equation}
where $\{N_t\}_{t\ge 0}$ is a c\`adl\`ag, adapted $H$-valued stochastic process on a filtered probability space $(\Omega,\Fscr,\{\Fscr_t\}_{t\ge 0},\P)$ and in inclusions of the form
\begin{equation}\label{eqn:stoch_mult}\begin{split}
  dX_t+A(t,X_t)\,dt&\ni B_t(X_t)\,dW_t,\quad t>0,\\
  X_0&=x,
\end{split}\end{equation}
where for some separable Hilbert space $U$, $B: [0,T] \times \Omega \times S \to L_2(U;H)$ takes values in the space of Hilbert-Schmidt operators from $U$ to $H$ and $\{W_t\}$ is a cylindrical Wiener process.

We prove the unique existence of solutions to \eqref{eqn:det:int}--\eqref{eqn:stoch_mult} as well as the unique existence of a weak-$*$ mean ergodic invariant measure for \eqref{eqn:stoch_mult} with additive noise, i.e.\ with $B_t(x) \equiv B_t$ independent of $x$.

We now comment on the main mathematical difficulties arising for singular evolution inclusions of type \eqref{eqn:det:int}--\eqref{eqn:stoch_mult}. The standard variational approach to (S)PDE of type \eqref{eqn:stoch_mult} requires the drift operator $A$ to be single-valued and to extend to a hemicontinuous, coercive operator $A: V \to V^*$ for some Gelfand triple $V \subseteq H \subseteq V^*$ (cf.\ \cite{KrRo0,Pard,PrRoe}). The reflexivity of $V$ and $V^*$ is crucial for the construction of solutions. Therefore, the standard approach cannot be applied to highly singular (S)PDE such as the total variation flow, the two phase Stefan problem, plasma diffusion and the curve shortening flow. In all of these examples the space $V$ degenerates in the sense that $V$ or $V^*$ fail to be reflexive. While recently increasing interest has been paid to this kind of singular, possibly multi-valued SPDE (cf.\ e.g.\ \cite{BDP3,BDPR,BenRas,Ciot3,Ras,Step2}), the unique existence of solutions could only be shown for additive noise and under strong dimensional restrictions. The principal idea of most of these works is the concept of (stochastic) 
evolution variational inequalities (EVI), thus weakening the notion of solutions to \eqref{eqn:stoch_mult}\footnote{EVIs are also referred to as SVIs, that is, stochastic variational inequalities. We are following this naming convention in Appendix \ref{SVIapp}.}. However, the approach via EVI has multiple drawbacks. First, it relies on the transformation of \eqref{eqn:stoch_mult} into a random PDE and hence is restricted to simple structures of noise, such as additive or linear multiplicative noise. Second, due to the weaker notion of solutions it is hard to prove uniqueness. In fact, so far uniqueness of EVI could only be proven in case of sufficiently regular additive noise. Third, the construction of solutions to EVI still requires a coercivity condition of the type
  \begin{equation}\label{coerc}\dualdel{V}{A(u)}{u}\ge c\norm{u}_V^\alpha.\end{equation}
for some $\alpha\ge 1$, $c>0$, which leads to restrictions on the dimension or the coercivity exponent $\alpha$. 

In order to remedy these obstacles for the class of equations considered in this paper, we introduce another Hilbert space $S$ embedded compactly and densely into $H$ such that
  \[S\subset V\subset H\equiv H^\ast\subset V^\ast\subset S^\ast.\]
Subsequently, we will drop the intermediate space $V$ and formulate the conditions of our hypotheses solely with respect to $S$. We assume that the drift $A$ is maximal monotone and of at most linear growth in $S^\ast$. We are able to replace the strong coercivity assumption \eqref{coerc} by weak dissipativity in $S$ formulated in an approximative way (cf.\ (A4) below). 

Once existence and uniqueness of solutions for additive noise has been shown, limiting solutions for multiplicative noise may be constructed by ``freezing the diffusion coefficients'' and iteration methods. This is a well-known technique relying on the monotonicity of the drift (cf.\ e.g.\ \cite{BDPR3,BDPR09}). However, the solutions constructed this way are in general not known to satisfy the actual SPDE in any way. By proving regularity properties (i.e.\ $X_t$ taking values in $S$ a.s.) we are able to identify the limiting solutions as variational solutions to \eqref{eqn:stoch_mult} (cf.\ Definition \ref{def:soln_mult} below).

Both existence and uniqueness of invariant measures for singular SPDE of the form \eqref{eqn:stoch_mult} with additive noise are difficult problems in general. Since only in dimension $d=1$ we expect a compact embedding of the energy space $V$ into $H$, for higher dimensions the standard approach to prove existence of invariant measures via the Krylov-Bogoliubov Theorem does not apply. Accordingly, for example, the existence of invariant measures for the stochastic total variation flow, i.e.
 $$ dX_t = \operatorname{div}\left[\abs{\nabla X_t}^{-1}\nabla X_t\right]\,dt+dB_t,$$
could so far only be shown in one dimension in \cite{BDPR}\footnote{In the recent work \cite{Kim12} the existence and uniqueness of an invariant measure for the total variation flow on $\R^d$ has been proven for all $d\in\N$. This work became available after the present paper was finished and poses the case of bounded domains as an open problem which will be solved here.}. Due to the lack of strong monotonicity of the drift operators $A$ considered in this paper, also the dissipativity approach to prove uniqueness and ergodicity of invariant measures (cf.\ e.g.\ \cite{BDP} and the references therein) cannot be used. In \cite{ESvR} a general abstract result from \cite{KPS} has been applied to prove uniqueness of the invariant measure for the stochastic curve shortening flow in one dimension. However, their proof still relies on the compactness of the embedding $V \subseteq H$ of the underlying Gelfand triple. By refining their analysis we manage to avoid the need of compactness and thus to prove existence and uniqueness of an invariant measure for the stochastic total variation flow also for $d = 2$. In case of the stochastic plasma diffusion equation (cf.\ \eqref{FD-ln} below) the situation is even more involved, since the homogeneity of the drift operator $A$ is lost. By deriving new, locally uniform decay estimates for its deterministic 
counterpart we manage to prove existence and uniqueness of invariant measures nonetheless. 

For additive noise, we choose a pathwise approach to construct the solutions, which allows to include general noise, such as L\'evy noise and fractional Brownian motion with arbitrary Hurst parameter. Since the literature on SPDE driven by L\'evy noise is extensive, we make no attempt to provide a complete list but only refer to \cite{G82,PZ07} and the references therein. Another advantage of a pathwise construction is that the existence of a stochastic flow and a random dynamical system (RDS) are immediate consequences. Thus, assuming that the noise $\{N_t\}$ in \eqref{eqn:stoch_add} has c\`adl\`ag paths in $S$, satisfies some spatial regularity and has strictly stationary increments we prove that there is an RDS associated to \eqref{eqn:stoch_add}. In case of additive Wiener noise this yields the Markov property for the associated semigroups $\{P_t\}$. 

Using stochastic calculus we prove that in case of multiplicative Wiener noise we can relax the spatial regularity assumptions on the noise while preserving the 
regularity of the solutions. For additive Wiener noise, we prove the existence and uniqueness of a weak-$*$ mean ergodic measure using recent methods by Komorowski, Peszat and Szarek \cite{KPS}.

Let us now describe the concrete applications covered by our general results in more detail. As a first example we include certain singular diffusion equations such as the stochastic $p$-Laplacian evolution, $p \in [1,2)$ (including the total variation flow, i.e.\ $p=1$, where
the equation becomes a multi-valued inclusion) in all space dimensions. More precisely,                                                                                                                                                 \begin{equation}\label{pLap}\begin{split}
  dX_t&\in\operatorname{div}\left[\abs{\nabla X_t}^{p-2}\nabla X_t\right]\,dt+\left\{\begin{aligned}&\,dN_t,\\&B_t(X_t)\,dW_t,\end{aligned}\right.\quad t>0,\\
  X_0&=x,
\end{split}\end{equation}
where we use the curly bracket to indicate that we either consider c\`adl\`ag additive noise $N_t$ as specified above or multiplicative Wiener noise. For all equations considered in this paper, unique existence of solutions is proven for general additive c\`adl\`ag noise and multiplicative Wiener noise in all space dimensions. As applied to \eqref{pLap} our results partially generalize the results of \cite{BDPR} (known to hold only in space dimensions $d=1,2$ and for Wiener additive noise) and \cite{BR12} (space dimensions $d=1,2,3$, linear multiplicative noise). In addition, we solve the open problem posed in \cite{BDPR} of uniqueness of the invariant measure for \eqref{pLap} as mentioned above. We note, however, that a key point of \cite{BDPR,BR12} is the well-posedness and characterization of solutions for initial conditions $x\in L^2(\Lambda)$ while we only construct solutions in a limiting sense for $x\in L^2(\Lambda)$ (using the well-posedness results from \cite{BDPR,BR12} these two notions of solutions coincide, cf.\ Appendix \ref{SVIapp} below). Equation \eqref{pLap} is a stochastically perturbed version of the deterministic $p$-Laplace equation which in its deterministic form appears in geometry, quasi-regular mappings, non-Newtonian fluid dynamics, plasma physics and image restoration (cf.\ e.g.\ \cite{Lad,ROF92}).

A further application of our general existence and uniqueness results, is the (1+1)-dim. stochastic curve shortening flow:
\begin{equation}\begin{split}
  dX_t(\xi)&=\frac{\partial_\xi^2 X_t(\xi)}{1+(\partial_\xi X_t(\xi))^2}\,dt+\left\{\begin{aligned}&\,dN_t,\\&B_t(X_t)\,dW_t,\end{aligned}\right.\quad\xi\in [0,1],\quad t>0,\\
  X_0(\xi)&=x(\xi),\end{split}
\end{equation}
which has been studied by Es-Sarhir and von Renesse \cite{ESvR} (see also \cite{ESvRS}). We are able to generalize their result about unique existence of solutions to the case of general additive noise.

More generally, we study the existence and uniqueness of solutions, as well as ergodicity (for additive Wiener noise) for
generalized $\Phi$-Laplacian equations with Neumann (zero average for ergodicity) or Dirichlet boundary conditions on bounded domains $\Lambda\subset\R^d$, $d\in\N$:
\begin{equation}\begin{split}dX_t&\in \div \Phi(\nabla X_t)\,dt+\left\{\begin{aligned}&\,dN_t,\\&B_t(X_t)\,dW_t,\end{aligned}\right.\quad t> 0,\\
X_0&=x,\end{split}\end{equation}
where $\Phi:\R^d\to 2^{\R^d}$ is given as the subdifferential of a convex, continuous function of subquadratic growth and $\Phi(0)=0$. 

Further admissible nonlinearities $\Phi$ include the ones appearing in the
evolution problem for plastic antiplanar shear (cf.\ \cite{Zhou}) and the so-called minimal surface flow (cf.\ \cite{LichTem}).

We also consider the stochastic singular fast diffusion equation for $r\in [0,1)$ (again including the critical, multi-valued case $r = 0$):
\begin{equation}\label{FD}\begin{split}
dX_t&\in\Delta\left[\abs{X_t}^{r-1}X_t\right]\,dt+\left\{\begin{aligned}&\,dN_t,\\&B_t(X_t)\,dW_t,\end{aligned}\right.\quad t>0,\\
X_0&=x.
\end{split}\end{equation}
This generalizes results given in \cite{BDP2} where only additive case has been considered. Moreover, we solve the open problem posed in \cite{BDP2,BDPR12} of uniqueness of the invariant measure for \eqref{FD} in one space dimension. Equation \eqref{FD} is a randomly perturbed version of the deterministic fast diffusion equation which, for example, arises in plasma physics (cf. \cite{Vaz2} and the references therein). In its stochastically perturbed form \eqref{FD} appears in the study of self-organized criticality (cf.\ \cite{BDPR2} and references therein). In models of self-organized criticality, the spontaneous, stable approach of a critical state, i.e.\ ergodicity and mixing properties are key features. Under dimensional restrictions, Liu and the second named author proved ergodicity and polynomial decay for the 
equations \eqref{pLap} and \eqref{FD}, excluding the limit cases $p=1$, $r=0$ in \cite{LiuToe1}.

Stochastic porous media equations and stochastic fast diffusion equations have been intensively investigated in recent years (cf.\ \cite{DPRRW06,RRW,BDPR09} and references therein). The stochastic fast diffusion equation ($0 < r < 1$) with linear multiplicative space-time noise has been first solved in \cite{RRW}. Subsequently, extinction in finite time with positive probability has been shown in \cite{BDPR2,BDPR09-3}.

We are also able to treat the so called stochastic diffusion equation of plasma for any space dimension:
\begin{equation}\label{FD-ln}\begin{split}
dX_t&=\Delta\left[\log\left(\abs{X_t}+1\right)\sgn(X_t)\right]\,dt+\left\{\begin{aligned}&\,dN_t,\\&B_t(X_t)\,dW_t,\end{aligned}\right.\quad t>0,\\
X_0&=x,
\end{split}\end{equation}
previously studied in an EVI setting in (1+1)-dimensions by Ciotir \cite{Ciot3}. Equation \eqref{FD-ln} is the stochastically perturbed version of a PDE arising in plasma physics, gas kinetics and statistical mechanics (cf.\  \cite{BerryHolland82}). For additive Wiener noise, we are able to
prove the existence of a unique invariant measure in one and two space dimensions.

For general symmetric negative-definite Dirichlet operators $L$ on $L^2(E,\Bscr,\mu)$, that
satisfy a spectral gap condition (here $(E,\Bscr,\mu)$ is an abstract separable measure space), we consider the so called generalized fast-diffusion equation
\begin{equation}\begin{split}dX_t&\in L\left[\phi({X_t})\right]\,dt+\left\{\begin{aligned}&\,dN_t,\\&B_t(X_t)\,dW_t,\end{aligned}\right.\quad t> 0,\\
X_0&=x,\end{split}\end{equation}
where $\phi:\R\to 2^\R$ is a monotone graph ($0\in\phi(0)$) with sublinear growth.

Future applications to RDS and random attractors similar to \cite{BGLR,Gess1,GessLiuRoe} as well as continuity results in the parameter $p$ in \eqref{pLap} ($r$ in \eqref{FD} resp.), similar to
\cite{CiotToe,CiotToe2}, are in preparation.

\section{Deterministic case}\label{sec:det_case}

We start by considering the deterministic, time-inhomogeneous evolution inclusion \eqref{eqn:det:int}. Let $H$ be a separable Hilbert space with dual $H^\ast$. 
Suppose that there is another Hilbert space $S$ embedded densely and compactly into $H$. We thus have a Gelfand triple
  \[S\subset H \subset S^\ast\]
and it holds that
  \[\dualdel{S}{v}{u}=(v,u)_H, \quad\text{whenever } u\in S,\;v\in H.\]
Let $i_S:S\to S^\ast$ denote the Riesz map of $S$. We note that the scalar product $(\cdot,\cdot)_S$ defines a bilinear, $S$-bounded, $S$-coercive form on $H$. By the Lax-Milgram Theorem there is a linear, positive definite, self-adjoint operator $T: D(T) \subseteq H \to H$ with $D(T^{1/2}) = S$ and $(T^{1/2}u,T^{1/2}v)_H = (u,v)_S$. We define $J_n = (1+\frac{T}{n})^{-1}$, $n\in\N$, to be the resolvent associated to $T$ and $T_n = TJ_n = n(1-J_n)$ to be the Yosida approximation of $T$. Then
  \[(x,y)_n:=(x,T_n y)_H,\quad x,y\in H,\]
form a sequence of new inner products on $H$, the induced norms $\norm{\cdot}_n$ are all equivalent to $\norm{\cdot}_H$ and
  \[\forall x\in S\; : \quad \norm{x}_n\uparrow\norm{x}_S\quad\text{as $n\to\infty$}.\]
Let $H_n:=(H,(\cdot,\cdot)_n)$. We will use the norms $\|\cdot\|_{n}$ as smooth approximations of the stronger norm $\|\cdot\|_S$. Due to the equivalence of $\|\cdot\|_{n}$ to $\|\cdot\|_{H}$ we may apply It\={o}'s formula to $\|\cdot\|_{n}^2$ and then pass to the limit $n \to \infty$. This will allow to derive estimates on the $S$-norm of approximating solutions to  \eqref{eqn:det:int}, see Lemma \ref{boundlem} below.

Let $g: [0,T] \to S$ be a c\`adl\`ag function (for simplicity $g(0)=0$). We consider the integral evolution inclusion
\begin{equation}\label{eqn:det}
   X_t \in x - \int_0^t A(s,X_s)\,ds+ g_t, 
\end{equation}
where $A:[0,T] \times S \to 2^{S^\ast}$ is a multi-valued operator. We will construct solutions to this equation by first using the transformation $Y_t = X_t - g_t$ which leads to the evolution inclusion
\begin{equation}\label{eqn:trans_det}\left\{\begin{aligned}
    dY_t+A(t,Y_t+g_t)\,dt&\ni 0,\quad t>0,\\
    Y_0&=x.
\end{aligned}\right.\end{equation}

We will prove unique existence for \eqref{eqn:trans_det} which leads to unique existence for \eqref{eqn:det} by transforming back, i.e.\ by defining $X_t := Y_t + g_t$.

Let $W^{1,2}(0,T)$ be defined by
\[W^{1,2}(0,T):=\left\{u\in L^2([0,T];S)\;\big\vert\;u\;\text{abs. cont.},\;\tfrac{d}{dt}u\in L^2([0,T];S^\ast)\right\},\]
cf.\ e.g.\ \cite[\S III.1]{Show} and let $D([0,T];H)$ be the space of all c\`adl\`ag functions in $H$ endowed with the topology of \textit{uniform convergence}. A solution of \eqref{eqn:trans_det} is a function $Y\in W^{1,2}(0,T)$ such that
    \[\frac{d}{dt}Y_t = -\zeta_t,\quad \text{ for a.e. } t \in [0,T],\]
$Y_0=x$ and $\zeta \in L^2([0,T];S^\ast)$ such
that $\zeta_t\in A(t,Y_t + g_t)$ for a.e. $t\in [0,T]$.
   
\begin{defi}\label{def:det}
  A \emph{solution} of \eqref{eqn:det} is a c\`adl\`ag function $X \in D([0,T];H)$ such that
    \[ X_t = x - \int_0^t \eta_s \,ds + g_t,\]
  for all $t \in [0,T]$ as an equation in $S^*$, where $\eta \in L^2([0,T];S^\ast)$ such that $\eta_t \in A(t,X_t)$ for a.e. $t\in [0,T]$.
\end{defi}

\begin{defi}\label{defi:measurable}
  Let $(E,\Bscr,\mu)$ be a $\sigma$-finite complete measure space and $Y$ be a Polish space. A map $F:E \to 2^Y$ with non-empty, closed values is called $\Bscr$-\emph{measurable} or simply \emph{measurable} if $\{F(\cdot)\cap O\not=\emptyset\}\in\Bscr$ for each open set $O\subset Y$.
\end{defi}

\begin{hypo}\label{hyp:A}
  Suppose that $A:[0,T] \times S \to 2^{S^\ast}$ satisfies the following conditions: There is a constant $C > 0 $ such that 
    \begin{enumerate}
  \item[\textup{(A1)}] \emph{Maximal monotonicity:} For all $t\in [0,T]$, the map $x\mapsto A(t,x)$ is maximal monotone with nonempty values.
  \item[\textup{(A2)}] \emph{Linear growth:} For all $x\in S$, for Lebesgue a.a.\ $t\in [0,T]$,
  for all $y\in A(t,x)$:
     \[\norm{y}_{S^\ast}\le f(t) + C\norm{x}_S,\]
     with some $f \in L^2([0,T])$.
  \item[\textup{(A3)}] \emph{Weak coercivity in $S$:} For all $x\in S$, for Lebesgue a.a. $t\in [0,T]$,
  for all $y\in A(t,x)$, and for all $ n\in\N$:
      $$\label{tn1} 2\dualdel{S}{y}{T_n (x)}\ge - \gamma(t) - C\norm{x}_S^2,$$ 
    with some $\gamma \in L^1([0,T])$, with $C,\gamma$ independent of $n$.
  \item[\textup{(A4)}] \emph{Measurability:}
      The map $t\mapsto A(t,x)$ is measurable w.r.t.\ the Lebesgue $\sigma$-algebra for all $x\in S$.
\end{enumerate}
\end{hypo}

A condition similar to (A3) has been used in \cite{Liu3,RW}. Note that (A4) can be dropped if $A$ is independent of time.

\begin{rem}
Using a standard transformation, it is sufficient for the unique existence of solutions to \eqref{eqn:det} to require that Hypothesis \ref{hyp:A} is satisfied for $A+\lambda \Id_H$ for some $\lambda > 0$. In other words, we may allow $A$ to be \emph{maximal quasi-monotone} with nonempty values.
\end{rem}

\begin{hypo}\label{hyp:g} 
   $g \in L^2([0,T];D(T^{3/2}))$.
\end{hypo}
Here, the operator $(T^{3/2},D(T^{3/2}))$ is defined in terms of the spectral theorem and the Hilbert space
$D(T^{3/2})$ is equipped with the graph norm.

In order to construct solutions to \eqref{eqn:trans_det} we will consider a viscosity approximation. Let $\eps>0$ and consider the perturbed problem
\begin{equation}\label{pereq}\left\{\begin{aligned}
    \frac{d}{dt}Y_t^\eps + \eps i_S(Y_t^\eps) &\in -A(t,Y_t^\eps+g_t),\quad t>0,\\
    Y_0^\eps&=x,
\end{aligned}\right.\end{equation}
The unique existence of variational solutions to these approximating problems is proved in Proposition \ref{prop:unique_ex_approx} below. Application of the transformation $X^\varepsilon_t:=Y^\varepsilon_t+g_t$ for the approximating equation yields
\begin{equation}\label{eqn:approx_det}
  X_t^\eps \in x - \int_0^t A(s,X_s^\eps) - \eps i_S(X_s^\eps-g_s)\ ds + g_t  ,\quad t>0.
\end{equation}
Letting $\eps \to 0$ we will prove the existence of solutions to  \eqref{eqn:trans_det}. Defining $X_t := Y_t + g_t$ we will then obtain
  
\begin{thm}\label{thm:unique_ex_det_smooth}
Assume Hypotheses \ref{hyp:A} and \ref{hyp:g} and let $x \in S$. Then \eqref{eqn:det} has a unique solution in the sense of Definition \ref{def:det} satisfying $X \in L^\infty([0,T];S)$ with
  \[ \sup_{t \in [0,T]}\|X_t^\eps-X_t\|_H^2+ \int_0^T\norm{X^\eps_t-X_t}_S^2\,dt\to 0\]
and $X$ is right-continuous in $S$. 
\end{thm}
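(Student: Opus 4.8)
The plan is to realise the solution as the vanishing-viscosity limit $\eps \to 0$ of the approximations \eqref{pereq}, transformed back via $X^\eps_t = Y^\eps_t + g_t$ as in \eqref{eqn:approx_det}. First I would invoke Proposition \ref{prop:unique_ex_approx} to obtain, for each $\eps > 0$, a unique variational solution $Y^\eps \in W^{1,2}(0,T)$ of \eqref{pereq}; the added term supplies the coercivity $\dualdel{S}{\eps i_S(u)}{u} = \eps\norm{u}_S^2$ that the standard monotone-operator theory on the Gelfand triple $S \subset H \subset S^\ast$ requires. The heart of the scheme is the bound on $\sup_\eps \norm{X^\eps}_{L^\infty([0,T];S)}$ uniform in $\eps$, provided by Lemma \ref{boundlem}. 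I would derive it by differentiating $t \mapsto \norm{Y^\eps_t}_n^2 = (Y^\eps_t, T_n Y^\eps_t)_H$ (legitimate since $\norm{\cdot}_n$ is equivalent to $\norm{\cdot}_H$ and $Y^\eps \in W^{1,2}(0,T)$), inserting the equation, and splitting $T_n Y^\eps_t = T_n X^\eps_t - T_n g_t$. The contribution $-\dualdel{S}{\eta^\eps_t}{T_n X^\eps_t}$ is controlled via the weak coercivity (A3), the term $\dualdel{S}{\eta^\eps_t}{T_n g_t}$ via (A2) together with $\norm{T_n g_t}_S \le \norm{T^{3/2} g_t}_H$ (which is precisely why Hypothesis \ref{hyp:g} demands $g \in L^2([0,T];D(T^{3/2}))$), and the viscosity contributes a favourable sign. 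A Gronwall argument and the monotone convergence $\norm{\cdot}_n \uparrow \norm{\cdot}_S$ then give the $\eps$-uniform $S$-bound, and with (A2) also bounds on $\eta^\eps$ and $\tfrac{d}{dt}Y^\eps$ in $L^2([0,T];S^\ast)$.

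Next I would pass to the limit. The uniform bounds yield a subsequence with $Y^\eps \rightharpoonup Y$ weak-$\ast$ in $L^\infty([0,T];S)$ and weakly in $L^2([0,T];S)$, with $\tfrac{d}{dt}Y^\eps \rightharpoonup \tfrac{d}{dt}Y$ and $\eta^\eps \rightharpoonup \eta$ weakly in $L^2([0,T];S^\ast)$, and $\eps i_S(Y^\eps) \to 0$ in $L^2([0,T];S^\ast)$, so that $\tfrac{d}{dt}Y = -\eta$. Since $S \hookrightarrow H$ compactly, the Aubin--Lions--Simon lemma upgrades this to strong convergence $Y^\eps \to Y$ in $C([0,T];H)$. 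It then remains to identify $\eta_t \in A(t, X_t)$ for a.e.\ $t$, with $X_t = Y_t + g_t$. For this I would use that the realization of $A$ on $L^2([0,T];S)$ is maximal monotone (by (A1), (A4)) and apply the Minty trick: it suffices to verify $\limsup_\eps \int_0^T \dualdel{S}{\eta^\eps_t}{X^\eps_t}\,dt \le \int_0^T \dualdel{S}{\eta_t}{X_t}\,dt$. Writing $\dualdel{S}{\eta^\eps_t}{X^\eps_t}$ through the equation, the term $-\int_0^T \dualdel{S}{\frac{d}{dt}Y^\eps_t}{Y^\eps_t}\,dt = \tfrac12(\norm{x}_H^2 - \norm{Y^\eps_T}_H^2)$ passes to the limit by the $C([0,T];H)$-convergence, the cross term with $g$ by weak convergence of $\tfrac{d}{dt}Y^\eps$, and the viscosity term $-\eps\int_0^T\norm{Y^\eps_t}_S^2\,dt \le 0$ only helps; this gives the required inequality, hence the inclusion, so $X = Y + g$ solves \eqref{eqn:det} and lies in $L^\infty([0,T];S)$.

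Uniqueness follows directly from (A1): for two solutions the difference satisfies $\tfrac12\tfrac{d}{dt}\norm{Y^1_t - Y^2_t}_H^2 = -\dualdel{S}{\eta^1_t - \eta^2_t}{X^1_t - X^2_t} \le 0$ with equal initial data, forcing $X^1 = X^2$. For the convergence, the clean device is the energy identity for $Y^\eps - Y$: since $Y^\eps - Y = X^\eps - X$ and monotonicity gives $\dualdel{S}{\eta^\eps_t - \eta_t}{X^\eps_t - X_t} \ge 0$, integration yields
\[ \tfrac12\norm{Y^\eps_t - Y_t}_H^2 + \eps\int_0^t \norm{Y^\eps_s}_S^2\,ds + \int_0^t \dualdel{S}{\eta^\eps_s - \eta_s}{X^\eps_s - X_s}\,ds = \eps\int_0^t (Y^\eps_s, Y_s)_S\,ds. \]
The right-hand side is $O(\eps)$, and since each term on the left is nonnegative, $\sup_{t}\norm{X^\eps_t - X_t}_H^2 \to 0$ follows at once. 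Bounding the right-hand side by Young's inequality as $\tfrac{\eps}{2}\int_0^T\norm{Y^\eps_s}_S^2\,ds + \tfrac{\eps}{2}\int_0^T\norm{Y_s}_S^2\,ds$ and cancelling gives $\int_0^T \norm{Y^\eps_s}_S^2\,ds \le \int_0^T \norm{Y_s}_S^2\,ds$, so $\limsup_\eps\int_0^T\norm{X^\eps_s}_S^2\,ds \le \int_0^T\norm{X_s}_S^2\,ds$; combined with weak lower semicontinuity this forces convergence of the $L^2([0,T];S)$-norms, which together with $X^\eps \rightharpoonup X$ in the Hilbert space $L^2([0,T];S)$ upgrades to $\int_0^T\norm{X^\eps_s - X_s}_S^2\,ds \to 0$.

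Finally, for right-continuity of $X$ in $S$ it is enough to treat $Y = X - g$. Being bounded in $L^\infty([0,T];S)$ and continuous into $H$, $Y$ is weakly continuous into $S$; combined with right upper semicontinuity of $t \mapsto \norm{Y_t}_S$ — which I would extract by passing the differential inequality for $\norm{\cdot}_n^2$ to the limit and integrating over $[t, t+h]$ — the Hilbert-space fact that weak convergence together with norm convergence implies strong convergence gives right-continuity in $S$. The main obstacle I anticipate is not the limit passage itself but the uniform $S$-estimate of Lemma \ref{boundlem}: because the usual coercivity \eqref{coerc} is unavailable and only the one-sided bound (A3) holds, the whole scheme rests on the interplay of the approximating norms $\norm{\cdot}_n$, the weak coercivity, and the precise $D(T^{3/2})$-regularity of $g$, and forcing the cross terms to close with constants independent of both $\eps$ and $n$ is the delicate point.
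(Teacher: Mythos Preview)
Your argument is correct, but the paper takes a different and somewhat more economical route. Instead of using Aubin--Lions--Simon to first obtain $C([0,T];H)$ compactness and then running the Minty trick to identify $\eta$, the paper exploits the specific Yosida structure of the viscosity term: from the difference inequality \eqref{eqn:eps->0} one reads off $\int_0^T\dualdel{S}{\eps i_S(Y^\eps_s)-\delta i_S(Y^\delta_s)}{Y^\eps_s-Y^\delta_s}\,ds\le 0$, and then invokes \cite[Lemma 1.4\,(ii)]{BCP} directly to get convergence of the norms $\int_0^T\norm{Y^{\eps_n}}_S^2\,dt\to\int_0^T\norm{Y}_S^2\,dt$, hence strong convergence in $L^2([0,T];S)$, \emph{before} knowing anything about the limit. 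With strong $L^2([0,T];S)$ convergence in hand, the identification $\eta\in A(\cdot,X)$ follows from the strong--weak sequential closedness of the realization $\tilde A$ on $L^2([0,T];S)$ (Proposition~\ref{prop:graphclosed}), a property strictly weaker and easier to establish than the maximal monotonicity you need for Minty; the $C([0,T];H)$ convergence is then a consequence of the same difference inequality. Your route is the standard monotone-operator template and is more portable (it would work for other regularizations than $\eps i_S$), whereas the paper's route leverages the Yosida structure to avoid Aubin--Lions and the Minty computation altogether. Incidentally, your energy-identity derivation of $\limsup_\eps\int_0^T\norm{Y^\eps}_S^2\le\int_0^T\norm{Y}_S^2$ is essentially an unpacking of the BCP lemma the paper cites as a black box.
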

\begin{proof}
See section \ref{proof:unique_ex_det_smooth}.
\end{proof}

By monotonicity of the drift we can extend the unique existence of solutions to all initial conditions $x \in H$ at the cost of passing to limiting solutions in the sense
\begin{defi}[Limit solution]\label{def:det_limit}
  We say that a function $X \in D([0,T];H)$ is a \emph{limit solution} to \eqref{eqn:det} with starting point $x \in H$, if $X_0=x$ and for each approximation $\{x^\delta\}$ with (eventually) $x^\delta \in S$ and $x^\delta \to x$ in $H$ the associated solutions $\{X^\delta\}$ converge to $X$ in $D([0,T];H)$.
\end{defi}

We obtain
\begin{thm}[Extension to all initial conditions $x \in H$]\label{thm:unique_ex_det_all}
  Suppose that Hypotheses \ref{hyp:A} and \ref{hyp:g} hold and let $x \in H$. Then there is a unique limit solution $X$.
\end{thm}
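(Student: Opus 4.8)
The plan is to obtain the unique limit solution directly from a uniform-in-time $H$-contraction estimate for the solutions constructed in Theorem~\ref{thm:unique_ex_det_smooth}, exploiting the monotonicity of $A$ together with the density of $S$ in $H$. The entire argument rests on a single stability estimate, after which existence, well-definedness and uniqueness follow formally.

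First I would establish the stability estimate. Fix the forcing $g$ and let $x^1,x^2\in S$, with $X^1,X^2$ the corresponding solutions of \eqref{eqn:det} from Theorem~\ref{thm:unique_ex_det_smooth}. Writing $Y^i_t:=X^i_t-g_t$, the transformed functions lie in $W^{1,2}(0,T)$ and satisfy $\frac{d}{dt}Y^i_t=-\eta^i_t$ with $\eta^i_t\in A(t,X^i_t)$; since the c\`adl\`ag part $g$ cancels, $Y^1-Y^2=X^1-X^2\in W^{1,2}(0,T)$, using $X^i\in L^\infty([0,T];S)$ and $g\in L^2([0,T];S)$ (which follows from Hypothesis~\ref{hyp:g}). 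The integration-by-parts formula for the Gelfand triple $S\subset H\subset S^\ast$ (cf.\ \cite[\S III.1]{Show}) then gives, for all $t\in[0,T]$,
\[ \norm{X^1_t-X^2_t}_H^2-\norm{x^1-x^2}_H^2 = -2\int_0^t \dualdel{S}{\eta^1_s-\eta^2_s}{X^1_s-X^2_s}\,ds \le 0, \]
where the inequality is the monotonicity (A1) evaluated along the solutions (both $X^i_s\in S$ for a.e.\ $s$). Hence
\[ \sup_{t\in[0,T]}\norm{X^1_t-X^2_t}_H \le \norm{x^1-x^2}_H. \]
In the maximal quasi-monotone situation of the Remark this becomes $\frac{d}{dt}\norm{X^1_t-X^2_t}_H^2\le 2\lambda\norm{X^1_t-X^2_t}_H^2$, and Gronwall yields the same Lipschitz conclusion up to a factor $e^{\lambda T}$.

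Next I would construct the limit solution. Given $x\in H$, choose by density of $S$ in $H$ any sequence $\{x^\delta\}\subset S$ with $x^\delta\to x$ in $H$, and let $X^\delta$ be the associated solutions. The contraction estimate gives $\sup_t\norm{X^\delta_t-X^{\delta'}_t}_H\le\norm{x^\delta-x^{\delta'}}_H\to 0$, so $\{X^\delta\}$ is Cauchy in $D([0,T];H)$ endowed with the uniform norm. As this space is complete and a uniform limit of c\`adl\`ag functions is again c\`adl\`ag, $X^\delta\to X$ for some $X\in D([0,T];H)$, and $X_0=\lim_\delta x^\delta=x$. The limit is independent of the approximating sequence: for a second sequence $\{\tilde x^\delta\}\subset S$ with $\tilde x^\delta\to x$, the same estimate gives $\sup_t\norm{X^\delta_t-\tilde X^\delta_t}_H\le\norm{x^\delta-\tilde x^\delta}_H\to 0$, so both families of solutions converge to the same $X$. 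Thus $X$ satisfies Definition~\ref{def:det_limit}, i.e.\ it is a limit solution.

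Finally, uniqueness is immediate: if $X$ and $X'$ are both limit solutions with starting point $x$, fix one approximating sequence $\{x^\delta\}$; by Definition~\ref{def:det_limit} the solutions $X^\delta$ converge both to $X$ and to $X'$ in $D([0,T];H)$, whence $X=X'$. The conceptual heart of the proof is that the contraction estimate is \emph{uniform in time}, which is exactly what permits passage to the limit in the uniform topology of $D([0,T];H)$ rather than merely in $L^2([0,T];H)$. The only point requiring genuine (if modest) care is the justification of the integration-by-parts formula for the difference $X^1-X^2$, which hinges on verifying $X^1-X^2\in W^{1,2}(0,T)$; I expect this, rather than the monotonicity computation itself, to be the main technical step.
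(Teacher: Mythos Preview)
Your proof is correct and follows essentially the same approach as the paper: both arguments derive the uniform-in-time $H$-contraction $\sup_t\norm{X^1_t-X^2_t}_H\le\norm{x^1-x^2}_H$ from the chain rule and monotonicity (A1), and then use density of $S$ in $H$ to pass to the limit. The paper phrases the estimate in terms of the transformed variables $Y^i=X^i-g$ (which is the same thing since $g$ cancels in the difference), and is somewhat terser about the $W^{1,2}(0,T)$ justification you flag; your treatment of that point is entirely in order.
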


\section{Stochastic evolution inclusions with additive noise}\label{sec:add_noise}

We now consider stochastic evolution inclusions with additive noise of the type \eqref{eqn:stoch_add}. Let $(\Omega,\Fscr,\{\Fscr_t\}_{t\ge 0},\P)$ be a filtered probability space (not necessarily complete, nor right-continuous), $N: [0,T] \times \Omega \to S$ be an $\{\Fscr_t\}_{t\ge 0}$-adapted stochastic process with c\`adl\`ag paths in $S$ and $N_0=0$. We define $L^0(\Omega,\Fscr_0;H)$ to be the space of all $\Fscr_0$-measurable, $H$-valued random variables and let $x \in L^0(\Omega,\Fscr_0;H)$. For each $\omega\in\Omega$ we consider the following integral equation in $S^\ast$
\begin{equation}\label{aeq}
   X_t(\omega) \in x(\omega)-\int_0^t A(s,X_s(\omega))\,ds + N_t(\omega).
\end{equation}

\begin{defi}\label{def:stoch_soln_add_pathwise}
  An $\{\Fscr_t\}_{t \in [0,T]}$-adapted stochastic process $X:[0,T]\times\Omega\to H$ is a \emph{pathwise (limit) solution} to \eqref{aeq} with starting point $x \in L^0(\Omega,\Fscr_0;H)$ if for all $\omega \in \Omega$, $X(\omega)$ is a (limit) solution for \eqref{aeq} with $g_\cdot = N_\cdot(\omega)$.
\end{defi}

Setting $g_t := N_t(\omega)$ for fixed $\omega \in \Omega$, Theorem \ref{thm:unique_ex_det_smooth} and Theorem \ref{thm:unique_ex_det_all} yield the existence of a pathwise (limit) solution $X$ as long as $A$ satisfies Hypothesis \ref{hyp:A} and $N_\cdot(\omega)$ satisfies Hypothesis \ref{hyp:g}, i.e.\ $N_\cdot(\omega) \in L^2([0,T];D(T^{3/2}))$ for each $\omega \in \Omega$. The $\{\Fscr_t\}_{t \in [0,T]}$-adaptedness of $X$ is proved in Section \ref{sec:pathwise_additive_noise} below. We obtain

\begin{theorem}\label{thm:unique_ex_stoch_add_pathw}
  Assume that $A$ satisfies Hypothesis \ref{hyp:A}
and for each $\omega\in\Omega$ the path $g_\cdot=N_\cdot(\omega)$ satisfies Hypothesis \ref{hyp:g}. For $x \in L^0(\Omega,\Fscr_0;S)$ there is a unique pathwise solution to \eqref{aeq} in the sense of Definition \ref{def:stoch_soln_add_pathwise} satisfying $X(\omega) \in L^\infty([0,T];S)$ and
    \[\sup_{t \in [0,T]}\|X_t^\eps(\omega)-X_t(\omega)\|_H^2 + \int_0^T\norm{X^\eps_t(\omega)-X_t(\omega)}_S^2\,dt \to 0, \quad \forall \omega \in \Omega.\]
  Moreover, $X(\omega)$ is right-continuous in $S$. For $x \in L^0(\Omega,\Fscr_0;H)$ there is a unique pathwise limit solution to \eqref{aeq}. 
\end{theorem}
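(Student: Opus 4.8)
The plan is to obtain everything except adaptedness directly from the deterministic theory, and to prove adaptedness by tracking the causal, measurable dependence of the viscosity approximations on the data. First I would fix $\omega \in \Omega$ and set $g_\cdot = N_\cdot(\omega)$. By assumption $A$ satisfies Hypothesis \ref{hyp:A} and $N_\cdot(\omega)$ satisfies Hypothesis \ref{hyp:g}, so for $x(\omega) \in S$ Theorem \ref{thm:unique_ex_det_smooth} applies and produces a unique solution $X(\omega) \in L^\infty([0,T];S)$ of \eqref{aeq}, right-continuous in $S$, together with the stated pathwise convergence $\sup_t \|X_t^\eps(\omega) - X_t(\omega)\|_H^2 + \int_0^T \|X_t^\eps(\omega) - X_t(\omega)\|_S^2\,dt \to 0$ of the viscosity approximations \eqref{eqn:approx_det}. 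Pathwise uniqueness is immediate, since two pathwise solutions agree for each $\omega$ by the deterministic uniqueness in Theorem \ref{thm:unique_ex_det_smooth}. Thus the only genuinely stochastic assertion left is that the process $X$ so defined is $\{\Fscr_t\}$-adapted.

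To prove adaptedness I would first reduce to the approximations. The pathwise convergence above gives $X_t^\eps(\omega) \to X_t(\omega)$ in $H$ for every fixed $t$ and $\omega$; since $H$ is separable, a pointwise limit of $\Fscr_t$-measurable $H$-valued maps is again $\Fscr_t$-measurable, and crucially this requires neither completeness nor right-continuity of the filtration. Hence it suffices to show that $X_t^\eps$ is $\Fscr_t$-measurable for each $\eps > 0$ and $t \in [0,T]$.

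The key point is that the regularized problem \eqref{pereq} is coercive, the term $\eps i_S$ restoring coercivity, so its solution map depends continuously, and causally, on the data $(x,g)$. Concretely, I would express $X_t^\eps$ as a measurable function of $\bigl(x, g\rs{[0,t]}\bigr)$: solving \eqref{pereq} on $[0,t]$ uses only the restriction of $g$ to $[0,t]$, and the solution at time $t$ depends continuously on this restriction in the relevant topology. The cleanest way to make the measurability transparent is via the implicit Euler scheme, where each step advances the solution by applying a resolvent of the maximal monotone operator $A(t,\cdot) + \eps i_S$, which is single-valued and Lipschitz, hence continuous, while the only new datum entering the step over $[t_{k-1},t_k]$ is the increment of $g = N(\omega)$, which is $\Fscr_{t_k}$-measurable. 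Iterating, the discretized solution at time $t$ is $\Fscr_t$-measurable, and passing to the continuous-time limit preserves this. Composing the continuous solution map with the $\Fscr_t$-measurable map $\omega \mapsto \bigl(x(\omega), N_\cdot(\omega)\rs{[0,t]}\bigr)$ then yields $\Fscr_t$-measurability of $X_t^\eps$. I expect this causal measurability of the approximating solution map to be the main obstacle, precisely because it must be carried out without augmenting the possibly incomplete, non-right-continuous filtration.

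Finally, for the case $x \in L^0(\Omega,\Fscr_0;H)$ I would invoke Theorem \ref{thm:unique_ex_det_all} pathwise to obtain the unique limit solution $X(\omega)$, and establish adaptedness by approximation: set $x^n := J_n x$, which is $\Fscr_0$-measurable since $J_n$ is bounded linear, and which takes values in $D(T) \subset S$ with $x^n \to x$ in $H$. By the previous step each associated solution $X^n$ is adapted, and Definition \ref{def:det_limit} gives $X^n \to X$ in $D([0,T];H)$, i.e.\ uniformly in $H$; the pointwise $H$-limit of the $\Fscr_t$-measurable maps $X_t^n$ is again $\Fscr_t$-measurable, so $X$ is adapted. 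Uniqueness of the limit solution follows from deterministic uniqueness as before.
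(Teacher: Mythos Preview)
Your overall plan is sound, and the reduction of adaptedness to the $\eps$-approximations via pointwise $H$-limits is correct. However, the paper takes a more direct route for adaptedness that avoids both the passage to $X^\eps$ and the implicit Euler scheme entirely. Namely, the paper shows that the \emph{full} solution map
\[
F: S \times \bigl(L^2([0,T];S)\cap L^2_w([0,T];D(T^{3/2}))\bigr) \to C([0,T];H),\qquad (x,g)\mapsto Y,
\]
is continuous. The argument is a one-line energy estimate: for two data $(x^n,g^n)\to(x,g)$ the uniform $L^\infty([0,T];S)$ bound of Lemma~\ref{boundlem} makes the selections $\eta^n,\eta$ bounded in $L^2([0,T];S^*)$ via (A2), and then monotonicity of $A$ yields
\[
\|Y_t^n-Y_t\|_H^2 \le -2\int_0^t \dualdel{S}{\eta_s^n-\eta_s}{g_s-g_s^n}\,ds \le 2\|\eta^n-\eta\|_{L^2([0,T];S^*)}\|g^n-g\|_{L^2([0,T];S)} \to 0.
\]
Kuratowski's theorem then gives $\Fscr_t$-measurability of $\omega\mapsto N_\cdot(\omega)\rs{[0,t]}$ as a map into $L^2([0,t];D(T^{3/2}))$, and composition with the continuous $F$ gives adaptedness of $Y$ (hence of $X$) directly. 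This buys you a cleaner argument with no discretization and no intermediate limit in~$\eps$.

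By contrast, your Euler-scheme route has a genuine incomplete step: you assert that ``passing to the continuous-time limit preserves this,'' but the convergence of the implicit Euler scheme to the solution $Y^\eps$ of \eqref{pereq} is not established anywhere in the paper (existence of $Y^\eps$ comes from Theorem~\ref{thm:Hupapa}, not from a time discretization), and proving it in this multi-valued, time-dependent setting is nontrivial. Your alternative suggestion of using a ``continuous solution map'' at the $\eps$-level would work and is close in spirit to the paper's argument, but then there is no reason to descend to the $\eps$-level at all: the same monotonicity/boundedness computation works directly for $Y$. Your treatment of the $x\in L^0(\Omega,\Fscr_0;H)$ case via $x^n:=J_n x$ is fine.
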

If the noise is two-sided and strictly stationary then the solutions generate a random dynamical system (RDS). Let $((\Omega,\Fscr,\P),(\theta_t)_{t \in \R})$ be a metric dynamical system, i.e. $(t,\omega) \mapsto \theta_t(\omega)$ is $(\mathscr{B}(\R) \otimes \Fscr,\Fscr)$ measurable, $\theta_0 =\Id$, $\theta_{t+s} = \theta_t \circ \theta_s$ and $\theta_t$ is $\P$-preserving, for all $s,t \in \R$. We assume that $N: \R \times \Omega \to S$ satisfies
\begin{hypo}\label{hyp:N}
  \item[\textup{(N)}] For all $t \ge s$ and $\omega \in \Omega$
    \[ N_t(\omega) - N_s(\omega) = N_{t-s}(\theta_s\omega). \]
\end{hypo}
By \cite[Lemma 3.1]{GessLiuRoe} for each $S$ valued process $\tilde N_t$ with $\tilde N_0 = 0$ a.s., stationary increments and a.s.\ c\`adl\`ag paths there exists a metric dynamical system $((\Omega,\Fscr,\P),(\theta_t)_{t \in \R})$ and a version $N_t$ of $\tilde N_t$ on $((\Omega,\Fscr,\P),(\theta_t)_{t \in \R})$ such that $N_t$ satisfies $(N)$. In particular, applications include all L\'evy processes and fractional Brownian motion with arbitrary Hurst parameter.

Since we constructed the solution for each path, we obtain
\begin{cor}\label{cor:gen_RDS}
  Assume that $A$ satisfies Hypothesis \ref{hyp:A}, is independent of time $t$ and $N$ satisfies Hypothesis \ref{hyp:g} and \ref{hyp:N} for all $\omega \in \Omega$. Then
    \[ \phi(t,\omega)x := X_t^x(\omega),\quad x \in H,\ t \in \R_+,\ \omega \in \Omega \]
  defines a continuous RDS associated to \eqref{eqn:stoch_add}, where $X_t^x(\omega)$ is the pathwise limit solution starting at $x$ obtained in Theorem \ref{thm:unique_ex_stoch_add_pathw}.
\end{cor}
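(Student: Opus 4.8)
The plan is to verify the three defining properties of a (continuous) random dynamical system for the map $\phi(t,\omega)x := X_t^x(\omega)$: that $\phi(0,\omega)=\Id$ and the cocycle identity $\phi(t+s,\omega) = \phi(t,\theta_s\omega)\circ\phi(s,\omega)$ hold for all $s,t\ge 0$ and $\omega\in\Omega$; that $x\mapsto\phi(t,\omega)x$ is continuous on $H$; and that $\phi$ is jointly measurable. First I would note that, since $T>0$ is arbitrary and solutions on nested intervals agree by the uniqueness in Theorem \ref{thm:unique_ex_stoch_add_pathw}, the pathwise limit solution extends consistently to a function $X_\cdot^x(\omega)$ on all of $\R_+$, so $\phi$ is well defined, and the identity $\phi(0,\omega)=\Id$ is immediate from $X_0^x(\omega)=x$.

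The core of the argument is the cocycle property, which I would derive from uniqueness together with Hypothesis (N). Fix $\omega\in\Omega$, $s\ge 0$ and $x\in S$. Writing the defining inclusion \eqref{aeq} (with $A$ time-independent) between times $s$ and $s+r$ and using (N) in the form $N_{s+r}(\omega)-N_s(\omega)=N_r(\theta_s\omega)$, the shifted path $r\mapsto X_{s+r}^x(\omega)$ satisfies
\[X_{s+r}^x(\omega) \in X_s^x(\omega) - \int_0^r A\bigl(X_{s+v}^x(\omega)\bigr)\,dv + N_r(\theta_s\omega),\]
i.e.\ it is a solution of \eqref{aeq} over the driving path $N_\cdot(\theta_s\omega)$ with starting point $X_s^x(\omega)=\phi(s,\omega)x$. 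Since $N_\cdot(\theta_s\omega)$ again satisfies Hypothesis \ref{hyp:g} (being a difference of shifts of $N_\cdot(\omega)\in L^2([0,T];D(T^{3/2}))$) and $\phi(s,\omega)x\in S$ by the $L^\infty([0,T];S)$-regularity asserted in Theorem \ref{thm:unique_ex_stoch_add_pathw}, uniqueness forces $X_{s+r}^x(\omega)=X_r^{\phi(s,\omega)x}(\theta_s\omega)=\phi(r,\theta_s\omega)\phi(s,\omega)x$, which is the cocycle identity for $x\in S$.

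To pass to general $x\in H$ and to obtain continuity, I would invoke the $H$-contraction of the solution map. Monotonicity of $A$ yields $\|X_t^x(\omega)-X_t^{x'}(\omega)\|_H\le\|x-x'\|_H$ for $x,x'\in S$, which both underlies the well-posedness of limit solutions in Theorem \ref{thm:unique_ex_stoch_add_pathw} and shows that $x\mapsto\phi(t,\omega)x$ is $1$-Lipschitz, hence continuous, on $H$; the cocycle identity then extends from the dense set $S$ to all of $H$ by continuity of both sides. For measurability I would use a Carath\'eodory argument: for fixed $(t,x)$ the map $\omega\mapsto\phi(t,\omega)x$ is $\Fscr_t$-measurable by the adaptedness established in Section \ref{sec:pathwise_additive_noise}, while for fixed $\omega$ the map $(t,x)\mapsto\phi(t,\omega)x$ is continuous in $x$ and right-continuous (c\`adl\`ag) in $t$; separability of $\R_+\times H$ then gives joint $\Bscr(\R_+)\otimes\Fscr\otimes\Bscr(H)$-measurability.

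The main obstacle I anticipate is the cocycle identity at the level of \emph{limit} solutions for initial data $x\in H\setminus S$: here the intermediate state $\phi(s,\omega)x$ need not lie in $S$, so the restart-and-uniqueness argument above does not apply directly, and one must instead approximate $x$ by $x^\delta\in S$, use the cocycle for each $x^\delta$, and pass to the limit via the Lipschitz contraction, uniformly in the shifted driver $N_\cdot(\theta_s\omega)$. Checking that this limit commutes cleanly with the restart --- that is, that approximating $x$ in $H$ produces approximations of $\phi(s,\omega)x$ which are themselves admissible restart data for the $\theta_s\omega$-system --- is the delicate point, although it ultimately reduces to the continuous-dependence estimate already available from Theorem \ref{thm:unique_ex_stoch_add_pathw} and Definition \ref{def:det_limit}.
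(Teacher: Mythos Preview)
Your proposal is correct and follows the route the paper has in mind. The paper itself does not give a written proof of this corollary: it simply prefaces the statement with ``Since we constructed the solution for each path, we obtain'', treating the RDS structure as a direct consequence of the pathwise uniqueness in Theorem~\ref{thm:unique_ex_stoch_add_pathw} together with the strict stationarity assumption~(N). Your argument --- deterministic uniqueness plus~(N) for the cocycle on $S$, then the $H$-contraction to pass to limit solutions and to get continuity, and finally adaptedness plus c\`adl\`ag paths for joint measurability --- is exactly the standard verification that this sentence leaves to the reader; the flow and cocycle identities you derive are in fact the ones the paper later invokes explicitly in the proof of Proposition~\ref{prop:markov_gen}. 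The one point you flag as delicate (restarting from $\phi(s,\omega)x\in H\setminus S$) is handled precisely as you indicate, by approximating $x$ in $H$, using the cocycle for the $S$-valued approximants, and passing to the limit via the uniform $1$-Lipschitz bound; no additional ingredient is needed.
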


\section{Stochastic evolution inclusions with multiplicative noise}

In this section we consider stochastic evolution inclusions driven by multiplicative Wiener noise, i.e.\ inclusions of the form \eqref{eqn:stoch_mult}. In this case, we may apply stochastic calculus and in particular It\=o's formula to strengthen our results in two directions: First, we prove that the regularity assumptions on the noise (Hypothesis \ref{hyp:g}) may be relaxed (cf.\ Hypothesis \ref{hyp:noise_2} below) while preserving the regularity of the solution. Second, we allow multiplicative noise, i.e.\ state-dependent diffusion coefficients $B_t$.

In order to be able to use stochastic calculus we require a normal filtered probability space $(\Omega,\hat\Fscr,\{\hat\Fscr_t\}_{t\ge 0},\P)$ with a cylindrical Wiener process $\{W_t\}_{t\ge 0}$ in $U$, where $U$ is some separable Hilbert space. We further require the diffusion coefficients $B: [0,T] \times \Omega \times S \to L_2(U,H)$ to be progressively measurable (i.e.\ for every $t \in [0,T]$ the map $B:[0,t] \times \Omega \times S\to L_2(U;H)$ is $\mathscr{B}([0,t])\otimes \hat\Fscr_t \otimes \mathscr{B}(S)$-measurable) and the following random version of Hypothesis \ref{hyp:g}:

\begin{hypo}\label{hyp:noise}
  \begin{enumerate}
   \item[\textup{(B1)}] There is an $h \in L^1([0,T]\times\Omega)$ such that
        \begin{equation*}\|B_t(x)\|_{L_2(U,H)}^2 \le C \|x\|_S^2 + h_t,\quad\textup{(growth)},\end{equation*}
        for all $t \in [0,T]$, $x\in S$ and $\omega \in \Omega$.
   \item[\textup{(B2)}] There is a $C > 0$ such that
        \begin{equation*}\|B_t(x)-B_t(y)\|_{L_2(U,H)}^2 \le C \|x-y\|_H^2,\quad\textup{(Lipschitz continuity)},\end{equation*}
        for all $t \in [0,T]$, $x,y\in S$ and $\omega \in \Omega$.
  \end{enumerate}
\end{hypo}
For the existence of variational solutions (in the sense of Definition \ref{def:soln_mult} below) we further require
\begin{hypo}\label{hyp:noise_2}
  \begin{enumerate}
   \item[\textup{(B3)}] There is an $h \in L^1([0,T]\times\Omega)$ such that
          \begin{equation*}\|B_t(x)\|_{L_2(U,S)}^2 \le C \|x\|_S^2 + h_t,\quad\textup{($S$-growth)},\end{equation*}
        for all $t \in [0,T]$, $x\in S$ and $\omega \in \Omega$.
  \end{enumerate}
\end{hypo}

\begin{defi}\label{def:soln_mult}
  We say that a continuous $\{\hat{\Fscr}_t\}_{t\ge 0}$-adapted stochastic process $X:[0,T]\times\Omega\to H$ is a solution to
  \begin{equation}\label{maine:mult_noise}\begin{split}
    dX_t + A(t,X_t)\,dt &\ni B_t(X_t)\,dW_t \\
     X(0) &= x 
  \end{split}\end{equation}
  if $X \in L^2(\Omega;C([0,T];H))\cap L^2([0,T]\times \Omega;S)$ and $X$ solves the following integral equation in $S^\ast$
    \[ X_t = X_0 -\int_0^t \eta_s \,ds + \int_0^t B_s(X_s)\, dW_s,\]
  $\P$-a.s.\ for all $t \in [0,T]$, where $\eta \in A(\cdot,X)$, $dt\otimes\P$-a.e.
\end{defi}

\begin{thm}[Multiplicative noise]\label{thm:unique_ex_stoch_mult}
  Let $x \in L^2(\Omega,\hat{\Fscr}_0;S)$. Assume that $A$ satisfies Hypothesis \ref{hyp:A} and $B$ satisfies Hypothesis \ref{hyp:noise} and \ref{hyp:noise_2}. Then there exists a unique solution $X$ to \eqref{maine:mult_noise} in the sense of Definition \ref{def:soln_mult} satisfying
    \[ \E\sup_{t \in [0,T]} \|X_t\|_S^2 < \infty \]
  and $X$ is $\P$-a.s.\ right-continuous in $S$.
\end{thm}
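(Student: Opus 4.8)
The plan is to combine the ``freezing the diffusion coefficients'' technique with the stochastic monotonicity method: I would build the solution as the limit of a Picard-type iteration and then upgrade it to a genuine variational solution by means of It\=o's formula. I would first dispose of \emph{uniqueness}, which is the cleanest ingredient. Given two solutions $X,\tilde X$ in the sense of Definition \ref{def:soln_mult}, I would apply It\=o's formula to $t\mapsto\norm{X_t-\tilde X_t}_H^2$ (legitimate since both processes lie in $L^2([0,T]\times\Omega;S)$ with $S^\ast$-valued drift). The drift contribution $-2\int_0^t\langle\eta_s-\tilde\eta_s,X_s-\tilde X_s\rangle\,ds\le 0$ is nonpositive by the monotonicity (A1), the martingale term has zero expectation, and the It\=o correction is controlled via $\norm{B_s(X_s)-B_s(\tilde X_s)}_{L_2(U,H)}^2\le C\norm{X_s-\tilde X_s}_H^2$ from (B2). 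Gronwall's lemma then forces $X=\tilde X$.

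For \emph{existence} I would set up the iteration $X^0:=x$ and, given the progressively measurable $X^k\in L^2(\Omega;C([0,T];H))\cap L^2([0,T]\times\Omega;S)$, let $X^{k+1}$ be the unique solution of the frozen problem
\[ dX^{k+1}_t + A(t,X^{k+1}_t)\,dt \ni B_t(X^k_t)\,dW_t, \qquad X^{k+1}_0 = x, \]
in which the diffusion $B_t(X^k_t)$ no longer depends on the unknown. Each frozen problem is a monotone evolution inclusion driven by a given square-integrable martingale noise, which I would solve by the viscosity/Yosida scheme of Section \ref{sec:det_case}, now carried out with It\=o's formula: regularize $A$ by its Yosida approximation and add $\eps i_S$ to recover coercivity in the reflexive triple $S\subset H\subset S^\ast$, solve the resulting single-valued coercive variational SPDE, and pass to the limit. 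The crucial point, and the reason multiplicative noise forces a genuinely stochastic argument rather than the pathwise reduction of Section \ref{sec:add_noise}, is the $S$-estimate: applying It\=o's formula to the equivalent smooth norms $\norm{\cdot}_n^2$ (valid since $\norm{\cdot}_n\sim\norm{\cdot}_H$), using the weak coercivity (A3) to absorb $2\langle\eta_s,T_n X_s\rangle$ and the $S$-growth (B3) on the It\=o correction, and then letting $n\to\infty$, yields
\[ \E\sup_{t\in[0,T]}\norm{X^{k+1}_t}_S^2 \le C\Big(\E\norm{x}_S^2 + \E\int_0^T h_t\,dt + \E\int_0^T\norm{X^k_t}_S^2\,dt\Big), \]
a bound made uniform in $k$ by a Gronwall argument.

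Next I would show the iteration contracts in $L^2(\Omega;C([0,T];H))$. Subtracting the equations for $X^{k+1}$ and $X^k$, applying It\=o to $\norm{X^{k+1}_t-X^k_t}_H^2$, discarding the nonpositive monotone drift term, estimating the It\=o correction by (B2), and applying the Burkholder--Davis--Gundy inequality to the martingale part gives
\[ \E\sup_{s\le t}\norm{X^{k+1}_s-X^k_s}_H^2 \le C\int_0^t\E\norm{X^k_s-X^{k-1}_s}_H^2\,ds. \]
Iterating this bound shows $\{X^k\}$ is Cauchy, hence converges to some $X\in L^2(\Omega;C([0,T];H))$; the uniform $S$-bound yields, along a subsequence, $X^k\rightharpoonup X$ weakly in $L^2([0,T]\times\Omega;S)$ and $\E\sup_t\norm{X_t}_S^2<\infty$ by weak lower semicontinuity.

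The main obstacle is the final \emph{identification} step: extracting a selection $\eta\in A(\cdot,X)$ in the limit. By (A2) the selections $\eta^k\in A(\cdot,X^k)$ are bounded in $L^2([0,T]\times\Omega;S^\ast)$, so $\eta^k\rightharpoonup\eta$ weakly along a subsequence and $X$ satisfies the integral equation in $S^\ast$ with this $\eta$; strong $H$-convergence of $X^k$ together with (B2) gives $B(X^k)\to B(X)$ strongly in $L^2([0,T]\times\Omega;L_2(U,H))$, so the stochastic integral passes to the limit. It remains to verify $\eta_t\in A(t,X_t)$ via the maximal monotonicity of $A$, i.e.\ that for every $(v,w)$ with $w\in A(\cdot,v)$ one has $\E\int_0^T\langle\eta_s-w_s,\,X_s-v_s\rangle\,ds\ge 0$. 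The delicate term is $\limsup_k\E\int_0^T\langle\eta^k_s,X^k_s\rangle\,ds$, which I would control through It\=o's formula for $\norm{X^k_t}_H^2$,
\[ \E\int_0^T\langle\eta^k_s,X^k_s\rangle\,ds = \tfrac12\E\norm{x}_H^2 - \tfrac12\E\norm{X^k_T}_H^2 + \tfrac12\E\int_0^T\norm{B_s(X^{k-1}_s)}_{L_2(U,H)}^2\,ds, \]
passing to the limit by the lower semicontinuity $\liminf_k\E\norm{X^k_T}_H^2\ge\E\norm{X_T}_H^2$ and the strong convergence of $B(X^{k-1})$. This reproduces the Krylov--Rozovskii monotonicity trick in the present multivalued, non-coercive setting and closes the identification. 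Right-continuity of $X$ in $S$ then follows as in the additive case from the variational structure and the $S$-regularity just obtained, and since uniqueness was already established, the proof is complete.
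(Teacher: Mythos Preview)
Your proof is correct and follows the same overall philosophy as the paper (freeze the diffusion, solve the resulting additive problem, iterate), but the execution differs in two respects.

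First, the paper packages the frozen problem into a single solution map $F:Z\mapsto X$ (using Proposition~\ref{prop:unique_ex_stoch_add} as a black box), shows that $F$ is a contraction on the closed set $\Wscr^{K}=\{Z:\|Z\|_{L^2(\Omega;C([0,T];S))}^{2}\le K\}\subset L^2(\Omega;C([0,T];H))$ for $T$ small, verifies that $F$ leaves $\Wscr^{K}$ invariant via the a-priori $S$-bound \eqref{eqn:S_bound_add_case}, and invokes Banach's fixed-point theorem. The global solution is then obtained by concatenation over short intervals. You instead run a Picard iteration directly on $[0,T]$ and extract the limit from the iterated estimate $\E\sup_{s\le t}\|X^{k+1}_s-X^{k}_s\|_H^2\le C\int_0^t\E\|X^{k}_s-X^{k-1}_s\|_H^2\,ds$, which gives global convergence without a small-time restriction. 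Your uniform $S$-bound likewise follows from iterating the discrete Gronwall inequality in $k$.

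Second, and more substantively, the paper avoids the identification step entirely: once $X=F(X)$, Proposition~\ref{prop:unique_ex_stoch_add} applied with input $B(X)$ already delivers a selection $\eta\in A(\cdot,X)$, so $X$ is a solution in the sense of Definition~\ref{def:soln_mult} by construction. Your route instead passes to weak limits $\eta^{k}\rightharpoonup\eta$ in $L^2([0,T]\times\Omega;S^\ast)$ and invokes Minty's trick, comparing the It\=o expansions of $\|X^{k}_T\|_H^2$ and $\|X_T\|_H^2$ to obtain $\limsup_k\E\int_0^T\langle\eta^{k}_s,X^{k}_s\rangle\,ds\le\E\int_0^T\langle\eta_s,X_s\rangle\,ds$. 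This is valid (It\=o's formula for the limit process only needs the integral-equation structure, not $\eta\in A(\cdot,X)$, and your strong convergence $X^k\to X$ in $L^2(\Omega;C([0,T];H))$ actually gives equality rather than merely $\limsup$), but it duplicates the work already done inside Proposition~\ref{prop:unique_ex_stoch_add}. The paper's fixed-point formulation is therefore shorter; your version is more self-contained and, via the iterated Gronwall, yields the result on the full interval without patching.
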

\begin{proof}
See section \ref{proof:unique_ex_stoch_mult}.\end{proof}

Using monotonicity of the drift and Lipschitz continuity of the noise, we can extend the existence result to every initial condition $x \in L^2(\Omega,\hat{\Fscr}_0;H)$ and driving noise taking values in $L_2(U,H)$ in a limiting sense.
\begin{defi}\label{def:stoch_soln_limit}
  An $\{\hat{\Fscr}_t\}_{t \in [0,T]}$-adapted stochastic process $X \in L^2(\Omega;C([0,T];H))$ is a limit solution to \eqref{maine:mult_noise} with starting point $x \in L^2(\Omega,\hat{\Fscr}_0;H)$ if for all approximations $x^\delta \in L^2(\Omega,\hat{\Fscr}_0;S)$ with $x^\delta \to x$ in $L^2(\Omega,\hat{\Fscr}_0;H)$ and $B^\delta$ satisfying Hypothesis \ref{hyp:noise} and \ref{hyp:noise_2} with $B^\delta(u) \to B(u)$ in $L^2([0,T]\times\Omega;L_2(U,H))$ for all $u \in S$ we have 
      $$X^\delta \to X,\quad \text{in } L^2(\Omega;C([0,T];H)).$$
\end{defi}

\begin{thm}[Multiplicative noise for all initial conditions]\label{thm:unique_ex_stoch_mult_all_ic}
   Let $x \in L^2(\Omega,\hat{\Fscr}_0;H)$. We assume that $A$ satisfies Hypothesis \ref{hyp:A} and $B$ satisfies Hypothesis \ref{hyp:noise}. Then there exists a unique limit solution $X$ to \eqref{maine:mult_noise} in the sense of Definition \ref{def:stoch_soln_limit}.
\end{thm}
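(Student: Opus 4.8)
The central device is a continuous dependence estimate for the solutions from Theorem \ref{thm:unique_ex_stoch_mult}, which I denote by $\Xcal(x,B)$ (initial datum $x\in L^2(\Omega,\hat\Fscr_0;S)$, coefficient $B$ obeying Hypotheses \ref{hyp:noise} and \ref{hyp:noise_2}). Writing $X=\Xcal(x,B)$, $\tilde X=\Xcal(\tilde x,\tilde B)$ and letting $\eta,\tilde\eta$ be the associated sections of $A$, I would apply It\=o's formula to $\norm{X_t-\tilde X_t}_H^2$ in the Gelfand triple $S\subset H\subset S^\ast$. The drift difference $-2\dualdel{S}{\eta_s-\tilde\eta_s}{X_s-\tilde X_s}$ is nonpositive by the monotonicity in (A1), and (B2) bounds the It\=o correction $\norm{B_s(X_s)-\tilde B_s(\tilde X_s)}_{L_2(U,H)}^2$ by $2C\norm{X_s-\tilde X_s}_H^2+2\norm{(B_s-\tilde B_s)(\tilde X_s)}_{L_2(U,H)}^2$. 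A Burkholder--Davis--Gundy estimate for the martingale term followed by Gronwall's inequality yields
\[\E\sup_{t\le T}\norm{X_t-\tilde X_t}_H^2\le C\Bigl(\E\norm{x-\tilde x}_H^2+\int_0^T\E\norm{(B_s-\tilde B_s)(\tilde X_s)}_{L_2(U,H)}^2\,ds\Bigr).\]
I would also record that (B1) at $u=0$ together with (B2) gives the uniform $H$-growth bound $\norm{B_s(u)}_{L_2(U,H)}\le\sqrt C\,\norm u_H+\sqrt{h_s}$ for every admissible coefficient, so that the integrand in the last term is always dominated by $C\norm{\tilde X_s}_H^2+h_s$.

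Given $x\in L^2(\Omega,\hat\Fscr_0;H)$ and $B$ satisfying only Hypothesis \ref{hyp:noise}, I would build $X$ by an iterated limit along a compatible approximation $x^\delta\to x$ in $H$, $B^\delta\to B$ as in Definition \ref{def:stoch_soln_limit}. For each \emph{fixed} admissible coefficient $B^\delta$ the estimate above contains no noise--difference term, so $\Xcal(\,\cdot\,,B^\delta)$ depends Lipschitz--continuously on the initial datum and the inner passage $x^\delta\to x$ in $H$ is immediate, producing a process $X^\delta\in L^2(\Omega;C([0,T];H))$. The whole difficulty is thereby transferred to the outer passage $B^\delta\to B$, i.e.\ to showing that $\{X^\delta\}$ converges and that the limit does not depend on the chosen approximation.

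Independence of the approximation is the clean part. If $X$ is the limit of one family $\{X^\delta\}$, then for any other admissible family $\{\tilde X^\delta\}$ the continuous dependence estimate bounds $\E\sup_{t\le T}\norm{\tilde X^\delta_t-X^\delta_t}_H^2$ by $\E\norm{\tilde x^\delta-x^\delta}_H^2$ plus the noise--difference term evaluated along the \emph{convergent} family $\{X^\delta\}$, which $\to X$ and is therefore relatively compact in $L^2([0,T]\times\Omega;H)$. Using the equi-Lipschitz property from (B2) to cover the compact range of $X$ by finitely many $H$-balls around points $u_1,\dots,u_N\in S$, this term is dominated by the finitely many quantities $\int_0^T\E\norm{(\tilde B^\delta_s-B^\delta_s)(u_i)}_{L_2(U,H)}^2\,ds\to0$ supplied by Definition \ref{def:stoch_soln_limit}, together with a net error and an $H$-tail absorbed by the uniform $H$-growth bound and uniform integrability. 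Hence $\tilde X^\delta\to X$ as well, giving uniqueness of the limit solution; adaptedness passes to the limit.

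The main obstacle is the existence of the \emph{first} convergent family, i.e.\ the Cauchy property of $\{X^\delta\}$ as the coefficient $B^\delta\to B$ varies \emph{together} with the solution. Unlike in the coercive theory there is no uniform bound on $\norm{X^\delta}_{L^2([0,T]\times\Omega;S)}$: the limiting coefficient need not be $S$-regular (it satisfies (B1),(B2) but not (B3)), so the $S$-regularity of the approximants degenerates as $\delta\to0$ and the relative compactness used in the previous paragraph is not available for the first family. The remedy is to lean on the $H$-Lipschitz structure of the noise: the uniform $H$-growth bound yields, via the monotonicity in (A1) (after the standard normalisation of the affine part of $A$ permitted by the remark following Hypothesis \ref{hyp:A}), uniform a priori control of $\E\sup_{t\le T}\norm{X^\delta_t}_H^2$ and hence uniform integrability of the dominating function $C\norm{X^\delta_s}_H^2+h_s$. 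Splitting the noise--difference term $\int_0^T\E\norm{(B^\delta_s-B^{\delta'}_s)(X^{\delta'}_s)}_{L_2(U,H)}^2\,ds$ into an $H$-tail controlled uniformly by this bound and a bounded part amenable to the equi-Lipschitz finite-net reduction against the pointwise convergence $B^\delta(u_i)\to B(u_i)$, one closes the Cauchy estimate uniformly in $\delta,\delta'$; carrying this out is the technical heart of the argument.
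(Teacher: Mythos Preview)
Your continuous-dependence estimate and the argument for \emph{independence of the approximation} (i.e.\ uniqueness once one convergent family exists) are correct and in fact more explicit than the paper's laconic ``similar arguments''. The genuine gap is in the \emph{existence} step, where you try to close the Cauchy estimate for a \emph{generic} approximation $B^\delta$ via an ``equi-Lipschitz finite-net reduction''. That reduction needs the values $X^{\delta'}_s$ to lie (up to a uniformly integrable tail) in a set that can be covered by finitely many $H$-balls centred at points $u_i\in S$; but an $H$-ball of radius $R$ is not totally bounded, so no finite $\varepsilon$-net exists, and the pointwise convergence $B^\delta(u_i)\to B(u_i)$ on finitely many $u_i$ cannot be leveraged. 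Uniform $H$-boundedness of $\{X^{\delta}\}$ (even if you had it---your appeal to the remark after Hypothesis~\ref{hyp:A} does not give it, since that remark concerns quasi-monotonicity and (A2) only places $A(t,0)$ in $S^\ast$, yielding $\dualdel{S}{y}{x}\ge -f(t)\|x\|_S$, not an $H$-lower bound) buys uniform integrability, not the compactness your net argument requires.

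The paper avoids this obstacle by not working with general $B^\delta$ at all in the construction step: it fixes the \emph{specific} approximation $B^m:=P_mB$, where $P_m$ is the spectral projection of $T$ onto $\mathrm{span}\{e_1,\dots,e_m\}$. Then the noise-difference term collapses to $\|(I-P_m)B_s(X_s^m)\|_{L_2(U,H)}^2$, and one exploits the projection structure (strong convergence $(I-P_m)\to 0$ together with the contractivity $\|(I-P_m)v\|\le\|v\|$) rather than any compactness of the orbits, to conclude that the two residual terms
\[
\E\int_0^T\|B_s^n(X_s^n)-B_s(X_s^n)\|_{L_2(U;H)}^2\,ds,\qquad
\E\int_0^T\|B_s(X_s^m)-B_s^m(X_s^m)\|_{L_2(U;H)}^2\,ds
\]
tend to zero. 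Once the limit $X$ has been produced along this particular sequence, your relative-compactness argument for independence of the approximation is exactly the right way to finish; so the fix is to construct $X$ via $B^m=P_mB$ first and then run your uniqueness paragraph.
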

\begin{proof}
See section \ref{proof:unique_ex_stoch_mult_all_ic}.\end{proof}

\section{Ergodicity}\label{sec:ergodicity}
In the following, we assume $A$, $B$ to be independent of $(t,\omega) \in [0,T] \times \Omega$ and to satisfy Hypotheses \ref{hyp:A} and \ref{hyp:noise_2} with $f$, $\gamma$ and $h$ being constant. We further restrict to the case of additive Wiener noise. Since $B$ satisfies Hypothesis \ref{hyp:noise_2} the process $BW_t$ is a trace class Wiener process in $S$ in the following denoted by $W^B_t$. We consider the canonical realization of its two-sided extension: $\Omega := C(\R;S)$, $\pi_t(\omega) = \omega(t)$ for all $\omega \in \Omega$, $t \in \R$, $\Fscr_t := \sigma\{\pi_s|\ s \in (-\infty,t]\}$, $\Fscr := \sigma\left\{\bigcup_{t\in \R} \Fscr_t\right\}$ and let $\P$ be the law of $W^B$ on $\Omega$. We define the Wiener shift to be $\theta_t (\omega) := \omega(t+\cdot)-\omega(t)$. Then $(\Omega,\{\Fscr_t\}_{t\in \R},\{\theta_t\}_{t\in \R},\P)$ is a metric dynamical system, the evaluation process $\pi_t$ is a trace-class Wiener process, which by abuse of notation is again denoted by $W^B_t$ and $W^B_t$ 
satisfies Hypothesis \ref{hyp:N}. Let $\{\hat\Fscr_t\}$ be the right-continuous
completion of $\{\Fscr_t\}$. We consider evolution inclusions of the form
\begin{equation}\begin{split}\label{eqn:spde_erg}
  dX_t + A(X_t)\,dt &\ni dW^B_t, \quad t > 0\\
  X_0             &= x.
\end{split}\end{equation}
We denote by $\mathscr{B}(H)$ the set of all Borel measurable subsets of $H$, by $\Bscr_b(H)$ (resp.\ $C_b(H)$) the Banach space of all bounded, measurable (resp.\ continuous) functions on $H$ equipped with the supremum norm and by $\Lip_b(H)$ the space of all bounded Lipschitz continuous functions on $H$. By $\mathscr{M}_1$ we denote the set of all Borel probability measures on $H$. For a semigroup $\{P_t\}$ on $\Bscr_b(H)$ we define the dual semigroup $\{P_t^*\}$ on $\mathscr{M}_1$ by $P_t^* \mu(B) := \int_H P_t \mathbbm{1}_B d\mu$, for $B \in \mathscr{B}(H)$. A measure $\mu \in \mathscr{M}_1$ is said to be invariant for the semigroup $\{P_t\}$ if $P_t^* \mu = \mu$, for all $t \ge 0$. For $T > 0$ and $\mu \in \mathscr{M}_1$ we define
  \begin{equation}\label{def:QT} Q^T\mu := \frac{1}{T} \int_0^T P_r^*\mu\,dr \end{equation}
and we write $Q^T(x,\cdot)$ for $\mu = \delta_x$. We endow $ \mathscr{M}_1$ with the topology of weak convergence, i.e.\ $w$-$\lim_{n\to\infty} \mu_n = \mu$ for a sequence $\mu_n \in  \mathscr{M}_1$ iff $\int_H \varphi(x) d\mu_n(x) = \int_H \varphi(x) d\mu(x)$ for all bounded, Lipschitz functions $\varphi$ on $H$. Recall:
\begin{defi}\label{def:weak_erg}
  A semigroup $\{P_t\}$ is called weak-$*$ mean ergodic if there exists a measure $\mu_* \in \mathscr{M}_1$ such that
   \[ \text{$w$-}\lim_{T \to \infty} Q^T\nu = \mu_*, \]
  for all $\nu \in \mathscr{M}_1$.
\end{defi}
Let $X(t,s;\omega)x$ be the solution to \eqref{eqn:spde_erg} starting at time $s \in \R$ in $x \in H$, with respect to the Wiener process $\tilde W^B_t = W^B_{t+s} - W^B_s$ (cf.\ \cite[section 4.3, p.\ 94]{PrRoe}). We choose this notation in order to emphasize that $X(t,s;\omega)$ defines a stochastic flow on $H$ in accordance to \cite[Section 2]{CDF97}.

 We define
  \[ P_x := (X(\cdot,0;\cdot)x)_*\P,\]
to be the law on $C([0,\infty),H)$ of the solution $X(\cdot,0;\cdot)x$ viewed as a random variable taking values in $C([0,\infty),H)$. By It\={o}'s formula
\begin{equation}\label{contr}
  \E \sup_{t \in [0,T]} \|X(t,0;\cdot)x - X(t,0;\cdot)y\|_H^2 \le \|x-y\|_H^2. 
\end{equation}

\begin{prop}\label{prop:markov_gen}
  The family $\{P_x\}_{x \in H}$ defines a time-homogeneous Markov process on $C([0,\infty),H)$ with respect to the filtration $\{\hat\Fscr_t\}_{t \in \R_+}$, i.e.
    \[ \E_x\left[F(\pi_{t+s})\Big|\hat\Fscr_s\right] =  \E_{\pi_s}[F(\pi_{t})],\quad \forall F \in \mathscr{B}_b(H),\ P_x\text{-a.s.}\]
\end{prop}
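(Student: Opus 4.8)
The plan is to establish the Markov property in its transition-function form: for every $F \in \Bscr_b(H)$ and all $s,t\ge 0$ I would show
\[
\E_x\bigl[F(\pi_{t+s})\bigm|\hat\Fscr_s\bigr] = (P_tF)(\pi_s),\quad P_x\text{-a.s.},
\]
where $(P_tF)(y):=\E\bigl[F(X(t,0;\cdot)y)\bigr]$. Since the right-hand side is exactly $\E_{\pi_s}[F(\pi_t)]$ and the operator $P_t$ does not depend on $s$, this simultaneously yields the asserted identity and time-homogeneity. Because $P_x = (X(\cdot,0;\cdot)x)_*\P$, the canonical process $\pi_\cdot$ under $P_x$ is a copy of $X(\cdot,0;\cdot)x$, so it is enough to prove the corresponding identity on $(\Omega,\hat\Fscr,\P)$ with $\pi_{t+s}$ and $\pi_s$ replaced by $X(t+s,0;\cdot)x$ and $X(s,0;\cdot)x$.

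First I would record two structural identities, both consequences of the uniqueness of pathwise (limit) solutions from Theorem \ref{thm:unique_ex_stoch_add_pathw}. The cocycle (flow) identity
\[
X(t+s,0;\omega)x = X(t,s;\omega)\bigl(X(s,0;\omega)x\bigr)
\]
holds because the restriction to $[s,t+s]$ of the solution started at time $0$ is itself a solution of the same inclusion driven by the post-$s$ increments of $W^B$ and issued from the value $X(s,0;\omega)x$; uniqueness forces the two to agree. For the second, Hypothesis \ref{hyp:N} gives $\tilde W^B_\cdot(\omega)=W^B_{\cdot+s}(\omega)-W^B_s(\omega)=W^B_\cdot(\theta_s\omega)$, so $X(\cdot,s;\omega)$ and $X(\cdot,0;\theta_s\omega)$ solve the same (autonomous) inclusion with the same driving noise; uniqueness then yields the shift-covariance
\[
X(t,s;\omega)y = X(t,0;\theta_s\omega)y,\quad \forall\,y\in H.
\]

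Next I would combine these with a standard freezing argument. The random variable $\xi:=X(s,0;\cdot)x$ is $\hat\Fscr_s$-measurable, whereas, by shift-covariance, $\omega\mapsto X(t,0;\theta_s\omega)y$ is a functional of $\theta_s\omega$ and hence of the increments $\{W^B_r - W^B_s : r\ge s\}$, which are independent of $\hat\Fscr_s$ (passing to the right-continuous completion only adjoins $\P$-null sets and preserves this independence). The contraction estimate \eqref{contr} makes $y\mapsto X(t,0;\omega)y$ $\P$-a.s.\ continuous, which together with adaptedness gives the joint measurability of $(y,\omega)\mapsto F(X(t,0;\theta_s\omega)y)$ and the measurability of $g(y):=\E\bigl[F(X(t,0;\theta_s\cdot)y)\bigr]$. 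The substitution lemma for a conditionally independent, jointly measurable integrand would then give
\[
\E\bigl[F(X(t+s,0;\cdot)x)\bigm|\hat\Fscr_s\bigr] = g(\xi),
\]
and since $\theta_s$ is $\P$-preserving, $g(y)=\E\bigl[F(X(t,0;\cdot)y)\bigr]=(P_tF)(y)$, independent of $s$. Rewriting $\xi=\pi_s$ under $P_x$ finishes the argument.

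I expect the main obstacle to be the rigorous justification of the cocycle and shift-covariance identities within the limit-solution, multi-valued framework: one must verify that concatenating the solutions at time $s$ again produces an admissible solution — in particular that the $S^\ast$-valued selection $\eta$ of the drift glues measurably across the two subintervals — so that the uniqueness statement of Theorem \ref{thm:unique_ex_stoch_add_pathw} is genuinely applicable, and that the limiting construction for $x\in H$ is itself compatible with restarting the flow. By contrast, the independence/freezing step is routine, the only care needed being that $\hat\Fscr_s$ is the completed, right-continuous filtration so that independence of the post-$s$ increments survives.
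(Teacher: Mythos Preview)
Your overall plan is sound and matches the paper's in spirit: reduce to the integrated Markov identity, invoke the flow/cocycle structure plus independence of the post-$s$ increments, and use that $\theta_s$ is $\P$-preserving. The paper phrases the freezing step via test functions $G(\pi_{t_1},\dots,\pi_{t_n})$ rather than the substitution lemma, but this is a cosmetic difference.

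The substantive divergence is precisely the point you flag as the ``main obstacle.'' You propose to establish the flow identity and shift-covariance \emph{directly} for limit solutions, i.e.\ for general $x\in H$ and for noise that only satisfies Hypothesis~\ref{hyp:noise_2} (so $W^B$ is merely $S$-valued, not $D(T^{3/2})$-valued). The paper avoids this. It first treats the case of regular initial data $x\in S$ and \emph{pathwise $D(T^{3/2})$-regular} noise, where Corollary~\ref{cor:gen_RDS} already furnishes a genuine RDS, so both the flow property and the cocycle identity are available by construction rather than by a gluing argument for selections $\eta$. In that regime the Markov identity \eqref{eqn:Markov_prop_for_X} is immediate. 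For the general case the paper does \emph{not} try to prove a cocycle for the limit solution; instead it uses that the solutions $X^m$ for the regularized noise $B^m=P_mB$ and approximating data $x^m\in S$ converge to $X$ in $L^2(\Omega;C([0,T];H))$, and passes the already-established integrated Markov identity to the limit, using the uniform Lipschitz dependence \eqref{contr} on the initial condition to control the inner expectation.

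So your proof will go through, but the cleanest way to fill the gap you anticipate is not to verify the multi-valued gluing for limit solutions but to argue as the paper does: prove the identity first at the approximate level (where the RDS is given for free) and then take limits. This sidesteps the measurable-selection issues entirely.
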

\begin{proof}
See section \ref{sec:proof:ergo}.
\end{proof}

We define
  $$ P_t F (x):= \E\left[F(X(t,0;\cdot)x)\right],\quad F \in C_b(H),\ x\in H$$
to be the Feller semigroup associated to the stochastic flow $X(t,s;\omega)x$. By Proposition \ref{prop:markov_gen} the semigroup property is satisfied. The so-called $e$-property 
  \begin{equation}\label{eproperty}\abs{P_t F(x)-P_t F(y)}\le\Lip(F)\norm{x-y}_H,\quad x,y\in H,\end{equation}
for all $F \in \Lip_b(H)$ follows from \eqref{contr}.
Consider the assumption
\begin{hypo}\label{hyp:A_2}
  \begin{enumerate}
     \item[\textup{(A5)}] There are constants $C, c > 0$ such that
      \[2 \dualdel{S}{y}{x} \ge c\|y\|_{S^*} - C, \]
      for all $[x,y] \in A$.
  \end{enumerate}
\end{hypo}

Let $u(\cdot)x \in C([0,T];H)$ denote the unique limit solution to
\begin{equation}\label{main_determ}\begin{split}
  du_t + A(u_t) dt &\ni 0, \\
  u_0              &= x \in H. 
\end{split}\end{equation}
The unique existence of such a solution $u$ follows from Theorem \ref{thm:unique_ex_det_smooth} and Theorem \ref{thm:unique_ex_det_all} with $g \equiv 0$.

\begin{hypo}\label{hyp:extinction}
Suppose that there exists a measurable \emph{Lyapunov function} $\Theta:H\to[0,+\infty]$ satisfying
  \[\dualdel{S}{y}{x}\ge\Theta(x),\quad\forall x\in S,\;\;\forall y\in A(x)\]
with bounded sublevel sets $K_R = \{x \in H|\ \Theta(x) \le R \}, R \in \R$ such that 
  \[ \lim_{t\to+\infty}\sup_{x \in K_R} \|u(t)x\|_H = 0, \]
for all $R \ge 0$.
\end{hypo}

Simple sufficient conditions for Hypothesis \ref{hyp:extinction} are given by

\begin{rem}\label{rem:suff_L2}
  \begin{enumerate}
   \item Suppose that there is a constant $c > 0$ such that
     \begin{equation}\label{extinction_sufficient}2 \dualdel{S}{y}{x} \ge c \|x\|_H^\alpha, \end{equation}
  for some $1 \le \alpha < 2$ and all $[x,y] \in A$. Then Hypothesis \ref{hyp:extinction} holds.
   \item Suppose there is a Lyapunov function $\Theta$ with compact sublevel sets and $\lim_{t\to+\infty}\norm{u(t)x}_H=0$ for all $x \in H$. Then Hypothesis \ref{hyp:extinction} holds.
  \end{enumerate}  
\end{rem}
\begin{proof}
See section \ref{sec:proof:ergo}.
\end{proof}

Then, using Theorem \cite[Theorem 2]{KPS}, we prove
\begin{thm}\label{thm:ergodicity}
  Assume Hypotheses \ref{hyp:A}, \ref{hyp:noise_2}, \ref{hyp:A_2}, \ref{hyp:extinction}. Then there exists a unique invariant measure $\mu_*$ for $\{P_t\}$ and
  \begin{enumerate}
   \item The semigroup $\{P_t\}$ is weak-$*$ mean ergodic.
   \item For any $\psi \in \Lip_b(H)$ and $\mu \in \mathscr{M}_1$ the weak law of large number holds, i.e.
         \[ \lim_{T \to \infty} \frac{1}{T} \int_0^T \psi(\pi_s)\, ds = \int_H \psi \,d\mu_*, \]
         in $\P_\mu$-probability, where $\P_\mu$ is the law of the Markov process $\pi$ started with initial distribution $\mu$.
  \end{enumerate}
\end{thm}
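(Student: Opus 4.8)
The plan is to verify the hypotheses of \cite[Theorem 2]{KPS} for the Feller semigroup $\{P_t\}$, namely the $e$-property together with the existence of a point which, in the time-averaged sense, is reachable uniformly over bounded sets of initial data; the natural candidate for this point is $z=0$, towards which the deterministic flow $u(t)x$ contracts by Hypothesis \ref{hyp:extinction}. The $e$-property \eqref{eproperty} is already available as a consequence of the contraction estimate \eqref{contr}, so the entire burden of the argument is to establish the reachability condition
\[\inf_{\norm{x}_H\le\rho}\liminf_{T\to\infty}Q^T(x,\{\norm{\cdot}_H<\eps\})>0\qquad\text{for every }\rho,\eps>0,\]
and then to invoke \cite[Theorem 2]{KPS}, which directly yields existence and uniqueness of $\mu_*$, the weak-$*$ mean ergodicity asserted in (1), and the weak law of large numbers asserted in (2).

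The first step is an energy estimate. Applying It\=o's formula to $\norm{X_t}_H^2$ along the solution $X_t=X(t,0;\cdot)x$ of \eqref{eqn:spde_erg}, with $\eta_t\in A(X_t)$, and taking expectations, the martingale term drops out and I obtain
\[\E\norm{X_T}_H^2+2\int_0^T\E\,\dualdel{S}{\eta_s}{X_s}\,ds=\norm{x}_H^2+T\norm{B}_{L_2(U,H)}^2.\]
Inserting the Lyapunov bound $\dualdel{S}{\eta_s}{X_s}\ge\Theta(X_s)\ge0$ from Hypothesis \ref{hyp:extinction} and dividing by $T$ shows that $\frac1T\int_0^T\E\,\Theta(X_s)\,ds$ is bounded by $\frac1{2T}\norm{x}_H^2+\frac12\norm{B}_{L_2(U,H)}^2$, uniformly for $x$ in bounded sets and $T\ge1$. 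By Chebyshev's inequality this forces the occupation measures to concentrate on the sublevel sets $K_R$: for $R$ large enough one has $Q^T(x,K_R)\ge\frac34$ uniformly for $\norm{x}_H\le\rho$ and $T$ large. Inserting instead Hypothesis \ref{hyp:A_2} into the same identity bounds $\frac1T\int_0^T\E\,\norm{\eta_s}_{S^\ast}\,ds$, a control of the drift that will be needed over short time windows below.

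The core step is the small-noise comparison that upgrades occupation of the merely $H$-bounded set $K_R$ to concentration near $0$. Fix $\eps>0$ and choose $R$ as above; by Hypothesis \ref{hyp:extinction} there is a time $t_0=t_0(R,\eps)$ with $\sup_{y\in K_R}\norm{u(t_0)y}_H\le\eps/2$. On a window $[s,s+t_0]$ I would compare $X_{s+\cdot}$ with the deterministic flow $u(\cdot)X_s$ started at the same point: writing $Z_r=X_{s+r}-\tilde W_r$ with $\tilde W_r=W^B_{s+r}-W^B_s$ and using monotonicity of $A$, one gets
\[\tfrac12\tfrac{d}{dr}\norm{Z_r-u(r)X_s}_H^2\le\bigl(\norm{\eta_{s+r}}_{S^\ast}+\norm{\zeta_r}_{S^\ast}\bigr)\,\norm{\tilde W_r}_S,\]
where $\zeta_r\in A(u(r)X_s)$. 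On the event that $\sup_{r\le t_0}\norm{\tilde W_r}_S\le\delta$ and that $\int_0^{t_0}\norm{\eta_{s+r}}_{S^\ast}\,dr$ does not exceed a fixed large constant, the right-hand side integrates to a quantity of order $\delta$, so that $X_{s+t_0}$ lies in $\{\norm{\cdot}_H<\eps\}$. Since $W^B$ is a trace-class Wiener process in $S$ (Hypothesis \ref{hyp:noise_2}), this small-increment event has a probability $p_\delta>0$ depending only on $\delta$ and $t_0$ and is independent of $\hat\Fscr_s$. Combining this with the Markov property (Proposition \ref{prop:markov_gen}) and the occupation bound $Q^T(x,K_R)\ge\frac34$, a time shift by the fixed amount $t_0$ (negligible as $T\to\infty$) gives $\liminf_T Q^T(x,\{\norm{\cdot}_H<\eps\})\ge\frac34 p_\delta>0$, uniformly over $\norm{x}_H\le\rho$, which is precisely the reachability condition.

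The main obstacle is this core step: making the comparison rigorous requires a genuinely pathwise — not merely in-expectation — control of $\int_0^{t_0}\norm{\eta_{s+r}}_{S^\ast}\,dr$ on the intersection of the small-noise event with a high-probability event, so that $p_\delta$ may be taken uniform in $s$ and in $\norm{x}_H\le\rho$. Here the interplay between Hypothesis \ref{hyp:A_2}, the linear growth (A2), the energy identity over the window, and the $S$-regularity $\E\sup_t\norm{X_t}_S^2<\infty$ of the solution is essential. The fact that $\Theta$ has only bounded — not compact — sublevel sets is exactly what forces the use of deterministic extinction in place of the compactness/tightness argument of \cite{ESvR}; in the special case of compact sublevel sets the occupation of $K_R$ would already give tightness of $\{Q^T(x,\cdot)\}$ directly.
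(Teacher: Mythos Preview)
Your overall strategy is the paper's: verify the $e$-property (done), prove a Lyapunov bound yielding $\liminf_T Q^T(x,K_R)\ge 1-\eps$ uniformly on bounded sets, and then push mass from $K_R$ into $B_\eps(0)$ via a small-noise comparison with the deterministic flow, finally invoking \cite[Theorem 2]{KPS}. The energy estimate and the Chapman--Kolmogorov reduction you sketch are exactly as in the paper.

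The gap you flag as the ``main obstacle'' is real in your write-up but has a clean resolution that you are missing, and which the paper uses (Lemma~\ref{stoch_stable_lemma}). You treat the small-noise event $\{\sup_{r\le t_0}\|\tilde W_r\|_S\le\delta\}$ and the drift-control event $\{\int_0^{t_0}\|\eta_{s+r}\|_{S^\ast}\,dr\le M\}$ as two independent constraints to be intersected, and then worry about uniformity of the latter. But the second event is \emph{implied} by the first, given bounded initial data. Work with your transformed variable $Z_r=X_{s+r}-\tilde W_r$ and apply the chain rule to $\|Z_r\|_H^2$ itself (not yet to the difference with $u$): since $\dot Z_r=-\eta_{s+r}$ pathwise,
\[
\|Z_{t_0}\|_H^2=\|X_s\|_H^2-2\int_0^{t_0}\dualdel{S}{\eta_{s+r}}{X_{s+r}-\tilde W_r}\,dr.
\]
Now insert (A5), i.e.\ $2\dualdel{S}{\eta_{s+r}}{X_{s+r}}\ge c\|\eta_{s+r}\|_{S^\ast}-C$, and bound the cross term by $2\|\eta_{s+r}\|_{S^\ast}\|\tilde W_r\|_S\le\delta\|\eta_{s+r}\|_{S^\ast}$ on the small-noise event. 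For $\delta<c$ this rearranges to
\[
(c-\delta)\int_0^{t_0}\|\eta_{s+r}\|_{S^\ast}\,dr\le\|X_s\|_H^2+Ct_0,
\]
which is the desired pathwise bound, uniform over $X_s$ in the $H$-bounded set $K_R$. The same computation with $\tilde W\equiv0$ bounds $\int_0^{t_0}\|\zeta_r\|_{S^\ast}\,dr$ for the deterministic flow. With both drift integrals controlled on the small-noise event alone, your comparison inequality yields $\|X_{s+t_0}-u(t_0)X_s\|_H^2\le C(1+\|X_s\|_H^2)\delta$ on that event, and the lower bound $P_{t_0}(z,B_\eps(0))\ge\P(\Omega^\delta)=:p_\delta>0$ follows uniformly for $z\in K_R$. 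There is no need to invoke (A2) or the $S$-regularity of $X$ here; (A5) does all the work.
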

\begin{proof}
See section \ref{sec:proof:ergo}.
\end{proof}

\section{Proofs}

\subsection{Deterministic case}

\subsubsection{Deterministic case with initial datum \texorpdfstring{$x \in S$}{in S} (Theorem \ref{thm:unique_ex_det_smooth})}\ \\

We need the following definition, cf. \cite{Zyg}.

\begin{defi}\label{def:superpositionally-measurable}
Let $(T,\Ascr,\mu)$ be a complete $\sigma$-finite measure space, let $X$, $Y$ be Polish spaces.
A multi-valued function $F:T\times X\to 2^Y$ is called \emph{product measurable} if it is $\Ascr\otimes\Bscr(X)$-measurable, where $\Bscr(X)$ is the Borel $\sigma$-algebra of $X$ (see Definition \ref{defi:measurable} for the meaning of \emph{measurability} in this context) and is called \emph{superpositionally measurable} if, for every
$\Ascr$-measurable multi-valued function $G:T\to 2^X$ with non-empty closed values, the
multi-valued function $F_G:T\to 2^Y$ defined by the superposition $F_G(t)=F(t,G(t))$ is
$\Ascr$-measurable, where $F(t,G(t))$ denotes the union of sets $F(t,x)$, where $x\in G(t)$.
\end{defi}

We first prove that the approximating equation \eqref{pereq} has a unique solution:
\begin{prop}\label{prop:unique_ex_approx}
  Assume Hypothesis \ref{hyp:A}: \textup{(A1)}, \textup{(A2)}, \textup{(A4)}, Hypothesis \ref{hyp:g} and let $x \in H$. Then equation \eqref{pereq}, i.e.
 \begin{equation*}\left\{\begin{aligned}
    \frac{d}{dt}Y_t^\eps + \eps i_S(Y_t^\eps) &\in -A(t,Y_t^\eps+g_t),\quad t>0,\\
    Y_0^\eps&=x
\end{aligned}\right.\end{equation*}
has a unique solution $Y^\eps \in C([0,T];H)$ in the sense that 
    \[ Y^\eps_t = x + \int_0^t \zeta^\eps_s \,ds,\quad \forall t \in [0,T],\]
as an equation in $S^*$, where $\zeta^\eps \in L^2([0,T];S^\ast)$ is such that $\zeta^\eps_t \in -\eps i_S(Y_t^\eps)-A(t,Y_t^\eps+g_t)$ for a.e. $t\in [0,T]$.
\end{prop}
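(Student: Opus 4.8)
The plan is to regard \eqref{pereq} as the Cauchy problem for the time-dependent operator $\mathcal{A}^\eps(t,\cdot):=\eps\, i_S+A(t,\cdot+g_t):S\to 2^{S^\ast}$ and to exploit that the viscous term renders this operator \emph{strongly} monotone. First I would record that, for a.e.\ $t$, $\mathcal{A}^\eps(t,\cdot)$ is maximal monotone: $\eps\,i_S$ is a bounded, everywhere-defined, continuous, monotone linear operator, so by the standard perturbation result for maximal monotone operators its sum with the maximal monotone $A(t,\cdot+g_t)$ is again maximal monotone, and translation by $g_t$ does not affect monotonicity. Moreover $\mathcal{A}^\eps(t,\cdot)$ is strongly monotone with constant $\eps$, i.e.\ $\dualdel{S}{y_1-y_2}{u_1-u_2}\ge\eps\norm{u_1-u_2}_S^2$ for $y_i\in\mathcal{A}^\eps(t,u_i)$. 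Uniqueness and continuous dependence are then immediate: for two solutions I subtract the equations and apply the Gelfand-triple chain rule, which for $u\in W^{1,2}(0,T)$ reads $\frac{d}{dt}\norm{u_t}_H^2=2\,\dualdel{S}{\tfrac{d}{dt}u_t}{u_t}$ (cf.\ \cite{Show}); monotonicity of $A$ yields $\frac{d}{dt}\norm{Y^1_t-Y^2_t}_H^2\le 0$, whence $Y^1=Y^2$.

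For existence I would regularize $A(t,\cdot)$ by its Yosida approximation $A^\lambda(t,\cdot):S\to S^\ast$, built from the resolvent $(i_S+\lambda A(t,\cdot))^{-1}$, which is single-valued, monotone and Lipschitz. Since (A1)--(A2) make $A(t,\cdot)$ everywhere defined with linear growth, $A^\lambda$ inherits the bound $\norm{A^\lambda(t,x)}_{S^\ast}\le f(t)+C\norm{x}_S$ uniformly in $\lambda$. The regularized drift $y\mapsto \eps\, i_S(y)+A^\lambda(t,y+g_t)$ is single-valued, monotone, hemicontinuous, of linear growth, coercive through the $\eps$-term, and measurable in $t$ (by (A4) together with the superpositional-measurability calculus of Definition \ref{def:superpositionally-measurable}). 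Hence the classical theory of monotone evolution equations (cf.\ \cite{Show}) provides a unique $Y^{\eps,\lambda}\in W^{1,2}(0,T)\subset C([0,T];H)$ solving the $\lambda$-problem.

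Next I would derive $\lambda$-uniform estimates. Testing the $\lambda$-equation with $Y^{\eps,\lambda}_t$ and using the chain rule gives $\tfrac12\frac{d}{dt}\norm{Y^{\eps,\lambda}_t}_H^2+\eps\norm{Y^{\eps,\lambda}_t}_S^2=-\dualdel{S}{\eta^\lambda_t}{Y^{\eps,\lambda}_t}$, where $\eta^\lambda_t=A^\lambda(t,Y^{\eps,\lambda}_t+g_t)$. The decisive point is to bound the right-hand side \emph{linearly} in $\norm{Y^{\eps,\lambda}_t}_S$: testing monotonicity of $A^\lambda(t,\cdot)$ against the reference point $0$ (where $\norm{A^\lambda(t,0)}_{S^\ast}\le f(t)$) yields $-\dualdel{S}{\eta^\lambda_t}{Y^{\eps,\lambda}_t}\le a(t)\norm{Y^{\eps,\lambda}_t}_S+b(t)$ with $a\in L^2([0,T])$, $b\in L^1([0,T])$ independent of $\lambda$ (Hypothesis \ref{hyp:g} enters through $\norm{g_t}_S\in L^2$). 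A Young inequality absorbs $a(t)\norm{Y^{\eps,\lambda}_t}_S$ into $\eps\norm{Y^{\eps,\lambda}_t}_S^2$, and Gronwall produces uniform bounds for $Y^{\eps,\lambda}$ in $L^\infty([0,T];H)\cap L^2([0,T];S)$; the equation then bounds $\frac{d}{dt}Y^{\eps,\lambda}$ in $L^2([0,T];S^\ast)$.

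Finally I would pass to the limit $\lambda\to 0$. Weak compactness yields $Y^{\eps,\lambda}\rightharpoonup Y^\eps$ in $W^{1,2}(0,T)$ and $\eta^\lambda\rightharpoonup\eta$ in $L^2([0,T];S^\ast)$; weak continuity of the traces gives $Y^\eps_0=x$ and, passing to the limit in the linear identity, $\frac{d}{dt}Y^\eps=-\eps\, i_S(Y^\eps)-\eta$ in $S^\ast$. The \emph{main obstacle} is the identification $\eta_t\in A(t,Y^\eps_t+g_t)$ a.e., which I would settle by a Minty argument. Since $\eta^\lambda\in A(\cdot,J^\lambda(Y^{\eps,\lambda}+g))$ and the resolvent correction is $O(\lambda)$ in $L^2([0,T];S)$, one has $J^\lambda(Y^{\eps,\lambda}+g)\rightharpoonup Y^\eps+g$, so that in the monotonicity inequality tested against an arbitrary measurable graph selection $[v,w]$ every term passes to the limit except $\int_0^T\dualdel{S}{\eta^\lambda_t}{J^\lambda_t(Y^{\eps,\lambda}_t+g_t)}\,dt$. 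This last term is handled by the energy identity $\int_0^T\dualdel{S}{\eta^\lambda_t}{Y^{\eps,\lambda}_t}\,dt=\tfrac12\norm{x}_H^2-\tfrac12\norm{Y^{\eps,\lambda}_T}_H^2-\eps\int_0^T\norm{Y^{\eps,\lambda}_t}_S^2\,dt$ together with weak lower semicontinuity of the $H$- and $L^2([0,T];S)$-norms (the $g$- and $J^\lambda$-corrections being lower order), which furnishes the required $\limsup$-inequality. Maximal monotonicity of the superposition operator --- the point where Definition \ref{def:superpositionally-measurable} guarantees measurable selections and compatibility with the merely measurable, c\`adl\`ag time dependence --- then gives $\eta\in A(\cdot,Y^\eps+g)$, and setting $\zeta^\eps=-\eps\, i_S(Y^\eps)-\eta$ completes the proof.
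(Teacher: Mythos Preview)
Your argument is correct but follows a genuinely different route from the paper. The paper obtains existence by a direct appeal to an abstract existence theorem for multi-valued evolution inclusions due to Hu--Papageorgiou--Papalini--Yannakakis (Theorem \ref{thm:Hupapa} in the appendix): splitting the drift as $J(t,x)=\tfrac{\eps}{2}\,i_S(x)$ and $F(t,x)=-A(t,x+g_t)-\tfrac{\eps}{2}\,i_S(x)$, it verifies the structural hypotheses (J1)--(J4), (F1)--(F5) one by one and reads off existence, with uniqueness following from monotonicity. Your approach instead regularises $A$ by its Yosida approximation, invokes the classical single-valued monotone theory for the $\lambda$-problem, derives uniform a~priori bounds from the $\eps$-coercivity, and recovers the multi-valued inclusion in the limit $\lambda\to 0$ via a Minty argument powered by the energy identity and weak lower semicontinuity. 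Both strategies are sound; the paper's is shorter because the technical work is outsourced to the abstract theorem, while yours is more self-contained and makes explicit exactly where the $\eps$-term, the linear growth (A2), and Hypothesis \ref{hyp:g} are consumed. The one step you pass over a bit quickly is the $t$-measurability of $A^\lambda(t,\cdot)$ (hence of the regularised drift): this does follow from (A4) via the product-measurability result of Zygmunt \cite{Zyg}---the same device the paper uses to check (F2)---together with the fact that the resolvent of a product-measurable maximal monotone family is jointly measurable, but it deserves an explicit sentence.
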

\begin{proof}
   We aim to apply Theorem \ref{thm:Hupapa} with
   \[F(t,x) = -A(t,x+g_t)-\frac{\eps}{2}i_S(x),\]
   \[ J(t,x) = \frac{\eps}{2} i_S(x),\] and $u_0 = x$. Obviously (J1)--(J4) are satisfied. We check the conditions on $F$: 
     
  (F1): A consequence of maximal monotonicity (A1), see \cite[Ch.\ 2, Proposition 1.1]{Barb2}.
  
  (F2): Let $x \in S$.
       By \cite[Ch. 3, Theorem 1.28]{HuPapa1} and maximal monotonicity of $x \mapsto A(t,x)$ for all $t \in [0,T]$, $x \mapsto A(t,x)$ is strongly-to-weakly upper-semicontinuous. (A4) and \cite[Theorem 1, Theorem 2]{Zyg} imply that $(t,x)\mapsto A(t,x)$ is product measurable and hence superpositionally measurable, see Definition \ref{def:superpositionally-measurable}. $F+(\eps/2)i_S$ is the composition of the c\`adl\`ag function $t \mapsto (t,x+g_t)$ and $A$, thus $t \mapsto F(t,x)$ is measurable.
      
  (F3): Since $A(t,\cdot)$ is maximal monotone so is $F(t,\cdot)$ for each $t \in [0,T]$, which implies sequential closedness in $S \times S_w^*$, see \cite[Ch. 2, Proposition 1.1]{Barb2}.

  (F4): Obvious by (A2) and Hypothesis \ref{hyp:g}. 

  (F5): We first note that (A1) combined with (A2) implies a weak coercivity property for $A$:
    \begin{align}\label{eqn:weak_H_coerc}
       \dualdel{S}{y}{x} 
       &= \dualdel{S}{y-z}{x-0} + \dualdel{S}{z}{x} \\
       &\ge - \|z\|_{S^*} \|x\|_S \ge - f(t)\|x\|_S, \nonumber
    \end{align}
    for all $t \in [0,T]$ and $[x,y] \in A(t)$ and $z \in A(t,0)$. For $t \in [0,T]$, $[x,y] \in F(t)$, $z \in A(t,x+g_t)$: 
     \begin{align*}
        \dualdel{S}{y}{x} 
        &= - \dualdel{S}{z}{x} - \frac{\eps}{2} \|x\|_S^2 = - \dualdel{S}{z}{x+g_t} - \dualdel{S}{z}{g_t} - \frac{\eps}{2} \|x\|_S^2 \\
        &\le f(t) \|x+g_t\|_S + \|z\|_{S^*}\|g_t\|_S - \frac{\eps}{2} \|x\|_S^2 \\
        &\le C(\eps) ( f(t)^2+ f(t) \|g_t\|_S + \|g_t\|_S^2 )
     \end{align*}
     and the last term is in $L^1([0,T])_{+}$ by (A2) and Hypothesis \ref{hyp:g}.
 
  Application of Theorem \ref{thm:Hupapa} thus yields the existence of a solution and monotonicity of $F, J$ implies uniqueness.
\end{proof}

\begin{lem}\label{boundlem}
  Under Hypothesis \ref{hyp:A} and \ref{hyp:g} for starting points $x \in S$, we have that
    \[\norm{Y_t^\eps}_S^2 \le C\norm{x}_S^2  + C \int_0^t e^{-Cs} \left(\gamma(s) + f_s^2 + \|T^{3/2}g_s\|_{H}^2 \right) ds < \infty.\]
\end{lem}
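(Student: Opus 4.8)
The plan is to test the equation with the regularised weight $T_n$ and to exploit that the norms $\norm{\cdot}_n$ increase to $\norm{\cdot}_S$. Since the viscosity term $\eps i_S$ forces the solution furnished by Proposition \ref{prop:unique_ex_approx} to lie in $W^{1,2}(0,T)$, in particular $Y^\eps \in L^2([0,T];S)$ with $\tfrac{d}{dt}Y^\eps \in L^2([0,T];S^\ast)$, I may use that $(u,v)_n := (u,T_n v)_H$ defines an equivalent Hilbert structure on $H$ for which $S \subset H \subset S^\ast$ remains a Gelfand triple, whose pairing on $H \times S$ is $(w,T_n u)_H = \dualdel{S}{w}{T_n u}$. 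The associated chain rule then gives, for a.e.\ $t$,
\[ \frac{d}{dt}\norm{Y_t^\eps}_n^2 = 2\dualdel{S}{\tfrac{d}{dt}Y_t^\eps}{T_n Y_t^\eps}, \]
which is legitimate because $T_n$ maps $S$ into $S$, so that $T_n Y_t^\eps \in S$ pairs with $\tfrac{d}{dt}Y_t^\eps \in S^\ast$.

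Next I would insert $\tfrac{d}{dt}Y_t^\eps = -\eps i_S(Y_t^\eps) - \eta_t$ with $\eta_t \in A(t,Y_t^\eps + g_t)$ and treat the three resulting contributions separately. The viscosity term equals $-2\eps(Y_t^\eps,T_n Y_t^\eps)_S = -2\eps\norm{T_n^{1/2}T^{1/2}Y_t^\eps}_H^2 \le 0$ (using that $T_n$ is positive and commutes with $T$), hence may be discarded. For the drift I would write $T_n Y_t^\eps = T_n(Y_t^\eps + g_t) - T_n g_t$; the first part is controlled by the $n$-uniform weak coercivity (A3) applied at $x = Y_t^\eps + g_t$, giving $-2\dualdel{S}{\eta_t}{T_n(Y_t^\eps+g_t)} \le \gamma(t) + C\norm{Y_t^\eps + g_t}_S^2$. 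The remaining part $2\dualdel{S}{\eta_t}{T_n g_t}$ is where Hypothesis \ref{hyp:g} enters: since $T^{1/2}T_n = T^{3/2}(1+T/n)^{-1}$ one obtains the $n$-uniform bound $\norm{T_n g_t}_S \le \norm{T^{3/2}g_t}_H$, and together with the linear growth (A2), $\norm{\eta_t}_{S^\ast} \le f(t) + C\norm{Y_t^\eps + g_t}_S$, and Young's inequality this term is absorbed into $f(t)^2 + C\norm{T^{3/2}g_t}_H^2 + C\norm{Y_t^\eps + g_t}_S^2$. Using $\norm{Y_t^\eps + g_t}_S^2 \le 2\norm{Y_t^\eps}_S^2 + C\norm{T^{3/2}g_t}_H^2$ (the spectral gap of $T$ lets $\norm{g_t}_S$ be dominated by $\norm{T^{3/2}g_t}_H$) I arrive at the pointwise inequality
\[ \frac{d}{dt}\norm{Y_t^\eps}_n^2 \le C\norm{Y_t^\eps}_S^2 + \gamma(t) + f(t)^2 + C\norm{T^{3/2}g_t}_H^2. \]

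The main obstacle is that this inequality cannot be closed by Gronwall at fixed $n$, since its left-hand side carries the weaker norm $\norm{\cdot}_n$ while the right-hand side carries the full $\norm{\cdot}_S$; this is precisely why (A3) is required with constants independent of $n$. The resolution is to integrate first and pass to the limit afterwards: integrating in $t$ and using $Y_0^\eps = x$ gives
\[ \norm{Y_t^\eps}_n^2 \le \norm{x}_n^2 + C\int_0^t \norm{Y_s^\eps}_S^2\,ds + \int_0^t \bigl(\gamma(s) + f(s)^2 + C\norm{T^{3/2}g_s}_H^2\bigr)\,ds. \]
Letting $n \to \infty$, monotone convergence upgrades the left-hand side and the initial term to $\norm{Y_t^\eps}_S^2$ and $\norm{x}_S^2$, while the right-hand integral is unchanged and finite because $Y^\eps \in L^2([0,T];S)$. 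This produces a genuine integral inequality for $\norm{Y_t^\eps}_S^2$ in terms of $\int_0^t \norm{Y_s^\eps}_S^2\,ds$, to which Gronwall's lemma applies: writing $w(t)$ for the right-hand side, the integrating-factor estimate $(e^{-Ct}w)'\le e^{-Ct}\bigl(\gamma(t)+f(t)^2+C\norm{T^{3/2}g_t}_H^2\bigr)$ integrates to exactly the asserted form, with the prefactor $e^{CT}$ absorbed into $C$. Finiteness of $\norm{Y_t^\eps}_S^2$ then follows from integrability of $\gamma$, $f^2$ and $\norm{T^{3/2}g_\cdot}_H^2$ on $[0,T]$.
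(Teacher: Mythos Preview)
Your proof is correct and follows essentially the same approach as the paper: test with $T_n$, split $T_n Y^\eps_t = T_n(Y^\eps_t+g_t) - T_n g_t$, invoke (A3) on the first piece and (A2) plus $\norm{T_n g_t}_S \le \norm{T^{3/2}g_t}_H$ on the second, drop the nonpositive viscosity contribution, and then pass $n\to\infty$ using $Y^\eps\in L^2([0,T];S)$. The only cosmetic difference is that the paper builds in the integrating factor $e^{-Kt}$ from the outset (so that after $n\to\infty$ the $\norm{Y^\eps_s}_S^2$ terms cancel directly with $K=2C$), whereas you first obtain the plain integral inequality and then apply Gronwall; these are the same argument.
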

\begin{proof}
  The proof is based on the smooth approximation of $\|\cdot\|_S$ by the norms $\|\cdot\|_n$. As outlined in Section \ref{sec:det_case} we first apply It\=o's formula for $\|\cdot\|_n^2$ and then take the limit $n \to \infty$.

  Based on the definition of the spaces $H_n$ in Section \ref{sec:det_case}, we get a sequence of new Gelfand triples
  \[S\subset H_n \subset S^\ast.\]
Moreover,
  \[T_n\rs{S}:S\to S\]
is continuous and the Riesz-map $i_S: S \to S^*$ is given by the extension of $T$ to $T: S \to S^*$. Thus
  $$\label{tn2} 2\dualdel{S}{i_S(x)}{T_n(x)} = 2(x,T_nx)_S = 2(T^{1/2}x,T_nT^{1/2}x)_H \ge 0, \quad\forall x\in S,$$
which will be used below.

  By Proposition \ref{prop:unique_ex_approx} we know that $Y^{\eps} \in L^2([0,T];S)$. We apply the chain-rule \cite[\S III.4]{Show} to $\norm{\cdot}^2_n$ to obtain that
  \begin{align*}
    \norm{Y^{\eps}_t}_n^2 e^{-Kt}
    &\le \norm{x}_n^2 - 2 \int_0^t e^{-Ks}\dualdel{S}{\eta_s^\eps+\eps i_S(Y^{\eps}_s)}{T_n(Y^{\eps}_s)}\,ds - K \int_0^t e^{-Ks} \norm{Y^{\eps}_s}_n^2\,ds \nonumber\\
    &=   \norm{x}_n^2 - 2 \int_0^t e^{-Ks} \dualdel{S}{\eta_s^\eps}{T_n(Y^{\eps}_s+g_s)}\,ds + 2 \int_0^t e^{-Ks} \dualdel{S}{\eta_s^\eps}{T_n(g_s)}\,ds \nonumber\\
      &\hskip15pt - 2\eps \int_0^t e^{-Ks} (Y^{\eps}_s,T_n(Y^{\eps}_s))_S\,ds - K \int_0^t e^{-Ks} \norm{Y^{\eps}_s}_n^2\,ds\nonumber\\
    &\le   \norm{x}_n^2 + \int_0^t e^{-Ks} \left( \gamma(s)+ C \|Y^{\eps}_s+g_s\|_S^2 \right) \,ds + \int_0^t e^{-Ks} \|\eta_s^\eps\|_{S^*}^2 \,ds \\
      &\hskip15pt + \int_0^t e^{-Ks} \|T_n g_s\|_S^2 \,ds - K \int_0^t e^{-Ks} \norm{Y^{\eps}_s}_n^2 ds\nonumber \\
    &\le   \norm{x}_n^2 + 2C \int_0^t e^{-Ks} \|Y^{\eps}_s\|_S^2 \,ds - K \int_0^t e^{-Ks} \norm{Y^{\eps}_s}_n^2 \,ds \nonumber\\
      &\hskip15pt + C \int_0^t e^{-Ks} \left(\gamma(s) + f_s^2 + \| T g_s\|_{S}^2 \right) ds, \nonumber  \end{align*}
  where $\eta_s^\eps \in A(Y^\eps_s + g_s)$ $ds$-a.e. Taking $K=2C$ and $n \to \infty$ yields
  \begin{align}\label{eqn:S_bound}
    \norm{Y^{\eps}_t}_S^2 e^{-Ct} \le \norm{x}_S^2  + C \int_0^t e^{-Cs} \left(\gamma(s) + f_s^2 + \|T^{3/2}g_s\|_{H}^2 \right) ds.
  \end{align}
\end{proof}

We are now ready to prove unique existence of solutions for initial data $x \in S$, i.e.
\begin{proof}[Proof of Theorem \ref{thm:unique_ex_det_smooth}]\label{proof:unique_ex_det_smooth}
  For $\eps > 0$ let $Y^\eps$ denote the solution to \eqref{pereq} with corresponding selection $\eta^\eps \in A(\cdot,Y^\eps+g)$. By Lemma \ref{boundlem} $\norm{Y^\eps}_{L^\infty([0,T];S)}\le C.$ Hence, there is a sequence $\{\eps_n\}$, $\eps_n\to 0$ such that 
    $$Y^{\eps_n}\rightharpoonup^\ast Y$$
  weakly-$\ast$ in $L^\infty([0,T];S)$. By the chain-rule (cf.\ \cite[\S III.4]{Show})
  \begin{align}\label{eqn:eps->0}
    \norm{Y_t^\eps-Y_t^\delta}^2_H 
    &\le - 2\int_0^t\dualdel{S}{\eta_s^\eps-\eta_s^\delta}{Y_s^\eps-Y_s^\delta}\,ds \\
      &\hskip12pt - 2\int_0^t\dualdel{S}{\eps i_S(Y_s^\eps)-\delta i_S(Y_s^\delta)}{Y_s^\eps-Y_s^\delta}\,ds,\quad t\in [0,T], \nonumber
  \end{align}
  for all $0<\delta<\eps$. Hence, 
    \[\int_0^T\dualdel{S}{\eps i_S(Y_s^\eps)-\delta i_S(Y_s^\delta)}{Y_s^\eps-Y_s^\delta}\,ds \le 0.\]
  Since $\norm{Y^\eps}_{L^\infty([0,T];S)}\le C$  we may apply \cite[Lemma 1.4, (ii)]{BCP} to obtain that $Y^{\eps_n}\rightharpoonup Y$ weakly in $L^2([0,T];S)$ and
    \begin{equation}\label{strongYeq}\int_0^T\norm{Y^{\eps_n}_t}_S^2\,dt\to\int_0^T\norm{Y_t}_S^2\,dt.\end{equation}
  Hence $Y^{\eps_n}\to Y$ strongly in $L^2([0,T];S)$. Then \eqref{eqn:eps->0} implies $Y^{\eps_n} \to Y$ in $C([0,T];H)$. By (A2) and Lemma \ref{boundlem} we can choose a further subsequence (again denoted by $\eps_n$) such that
    \[ \eta^{\eps_n} \rightharpoonup \eta, \quad \text{in } L^2([0,T];S^*). \]
  In order to identify the limit $\eta$, we consider the operator $\tilde A: L^2([0,T];S) \to 2^{L^2([0,T];S^*)}$ defined by 
   $$\tilde A: x \mapsto \{ y \in L^2([0,T];S^*)|\ y(t) \in A(t,x(t)) \text{ for a.e. } t \in [0,T] \}. $$
  By Proposition \ref{prop:graphclosed} $\tilde A$ is strongly-weakly closed, i.e.\ $\tilde A$ is sequentially closed in $L^2([0,T];S) \times L^2_w([0,T];S^*)$. Thus $[Y+g,\eta] \in \tilde A$ and we deduce $[Y_t+g_t,\eta_t] \in A(t)$ for a.e.\ $t \in [0,T]$. By monotonicity the solution is unique and thus the whole sequence (net) $\{Y^\eps\}$ converges strongly to $Y$ in $L^2([0,T];S)$. Since $Y$ is a unique solution to \eqref{eqn:trans_det} $X_t := Y_t + g_t$ is a unique solution to \eqref{eqn:det}.

  We now prove $X \in L^\infty([0,T];S)$ and $X$ is right-continuous in $S$. By the same calculation as for \eqref{eqn:S_bound} we obtain
  \begin{equation}\label{eqn:S_bound_2}
     \norm{Y_t}_S^2 e^{-Ct} \le \norm{Y_s}_S^2 e^{-Cs}  + C \int_s^t e^{-Cr} \left(\gamma(r) + f_r^2 + \|T^{3/2} g_r\|_{H}^2 \right)\, dr. 
  \end{equation}
  In particular,
    $$ \sup_{t \in [0,T]} \|Y_t\|_S^2 < \infty$$
  which due to compactness of $S \subseteq H$ and continuity of $t \mapsto Y_t$ in $H$ implies weak continuity of $Y$. Let $t_n \in [0,T]$ with $t_n \downarrow t$. By \eqref{eqn:S_bound_2}
    $$ \norm{Y_{t_n}}_S^2 e^{-C{t_n}} \le \norm{Y_t}_S^2 e^{-Ct}  + C \int_t^{t_n} e^{-Cs} \left(\gamma(s) + f_s^2 + \|T^{3/2} g_s\|_{H}^2 \right) ds.  $$
  Hence, $\limsup_{n \to \infty} \norm{Y_{t_n}}_S^2 \le \norm{Y_t}_S^2$. By weak continuity we conclude $Y_{t_n} \to Y_t$ in $S$.
   
\end{proof}

\vspace{4mm}
\subsubsection{Deterministic case with initial datum \texorpdfstring{$x \in H$}{in H} (Theorem \ref{thm:unique_ex_det_all})}\ \\

We prove Theorem \ref{thm:unique_ex_det_all}. Let $x^\delta$, $\bar x^\eps \in S$ be two approximations of $x$ in $H$ and $Y^\delta$, $\bar Y^\eps$ be the variational solutions constructed in Theorem \ref{thm:unique_ex_det_smooth} corresponding to $x^\delta, \bar x^\eps \in S$. By the chain-rule
  \begin{align*}
    \|Y_t^\delta-\bar Y_t^\eps\|_H^2 
    &= \|x^\delta-\bar x^\eps\|_H^2 - 2\int_0^t \dualdel{S}{\eta^\delta_r-\bar\eta^\eps_r}{Y_r^\delta-\bar Y^\eps_r}\,dr \\
    &\le \|x^\delta-\bar x^\eps\|_H^2,
  \end{align*}
  where $\eta^\delta_r \in A(Y^\delta_r+g_r)$, $\bar\eta^\eps_r \in A(\bar Y^\eps_r+g_r)$. Thus $\{Y^\delta\}$ is a Cauchy sequence in $C([0,T];H)$ and every sequence of approximative solutions $\{\bar Y^\eps\}$ converges to the same limit $Y$ in $C([0,T];H)$.

\vspace{4mm} 
\subsection{Stochastic evolution inclusions with additive noise}\label{sec:pathwise_additive_noise}\ \\

Let $x \in L^0(\Omega,\Fscr_0;S)$ and $X$ be the corresponding pathwise solution to \eqref{aeq}. It remains to prove $\{\Fscr_t\}_{t \in [0,T]}$-adaptedness of $X$. 

Under the assumptions Theorem \ref{thm:unique_ex_det_smooth} we have shown that there is a unique solution $Y$ to \eqref{eqn:trans_det}. We now prove that the solution map 
\begin{align*}
  F: S \times \left(L^2([0,T];S)\cap L^2_w([0,T];D(T^{3/2}))\right)  &\to C([0,T];H) \\
      (x,g)                     &\mapsto Y,
\end{align*}
is continuous (here, the subscript ``$\phantom{\cdot}_w$'' denotes the weak Hilbert topology). Let $x^n \to x$ in $S$ and $g^n \to g$ in $L^2([0,T];S)$ with $g^n\rightharpoonup g$ weakly in $L^2([0,T];D(T^{3/2}))$. The bound in Lemma \ref{boundlem} for $Y^n$, $Y$ does not depend on $n$ since $g^n$ is uniformly bounded in $L^2([0,T];D(T^{3/2}))$. Hence $\eta_t^n \in A(t,Y_t^n+g_t^n)$, $\eta_t \in A(t,Y_t+ g_t)$ are uniformly bounded in $L^2([0,T];S^*)$. By the chain rule (\cite[\S III.4]{Show})
  \begin{align*}
    \norm{Y_t^n-Y_t}^2_H 
    &= - 2 \int_0^t\dualdel{S}{\eta_s^n-\eta_s}{Y_s^n-Y_s}\,ds \\
    &\le - 2 \int_0^t\dualdel{S}{\eta_s^n-\eta_s}{g_s- g_s^n}\,ds \\
    &\le 2 \|\eta_s^n-\eta_s\|_{L^2([0,T];S^*)}  \|g-g^n\|_{L^2([0,T];S)} \to 0,
  \end{align*}
which proves the claimed continuity.

Let $N$ be as in Section \ref{sec:add_noise}. By Kuratowski's Theorem \cite[Ch.\ I.3]{parthasarathy1967} the restriction $N\rs{[0,t]}: \Omega \to L^2([0,t];D(T^{3/2}))$ is $\Fscr_t$-measurable for all $t \in [0,T]$. Hence, continuity of the solution map $F$ implies $\{\Fscr_t\}_{t \in [0,T]}$-adaptedness of $Y$ and thus of $X$.

\vspace{4mm} 
\subsection{Stochastic evolution inclusions with multiplicative noise}\ \\
We will now prove Theorem \ref{thm:unique_ex_stoch_mult} and Theorem \ref{thm:unique_ex_stoch_mult_all_ic}. The proof consists of two main steps. First we consider the case of additive noise satisfying the weaker regularity properties in Hypothesis \ref{hyp:noise}, \ref{hyp:noise_2} as compared to Hypothesis \ref{hyp:g}. In the second step we use the unique existence of solutions for additive noise in order to construct solutions to the case of multiplicative noise, using a fixed point argument.

\vskip20pt 
\subsubsection{Stochastic evolution inclusions with additive Wiener noise}\ \\

For the rest of this Section assume $B$ to be independent of $x \in S$.
\begin{prop}\label{prop:unique_ex_stoch_add}
  Let $x \in L^2(\Omega,\hat{\Fscr}_0;S)$ and assume Hypotheses \ref{hyp:A}, \ref{hyp:noise} and \ref{hyp:noise_2}. Then there is a solution $X$ to \eqref{maine:mult_noise} satisfying 
    \[ \E\sup_{t \in [0,T]}\|X_t\|_S^2 <\infty\]
  and $X$ is $\P$-a.s.\ right-continuous in $S$. If $x \in L^2(\Omega,\hat\Fscr_0;H)$ and Hypotheses \ref{hyp:A} and \ref{hyp:noise} are satisfied, then there is a unique limit solution to \eqref{maine:mult_noise}.  
\end{prop}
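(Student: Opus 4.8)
The plan is to regard \eqref{maine:mult_noise} with $x$-independent $B$ as an additive-noise equation driven by the $S$-valued continuous martingale $N_t := \int_0^t B_s\,dW_s$, and to reduce to the pathwise theory of Theorem \ref{thm:unique_ex_stoch_add_pathw} after regularizing the noise. The obstruction is that Hypothesis \ref{hyp:noise_2} only yields $B_s \in L_2(U,S)$, so that $N$ lives in $S$ but need not satisfy the stronger spatial regularity of Hypothesis \ref{hyp:g}. I would therefore set $B^{(m)}_s := J_m B_s$, with $J_m = (1+\tfrac{T}{m})^{-1}$ the resolvent from Section \ref{sec:det_case}; since $J_m$ maps $S=D(T^{1/2})$ into $D(T^{3/2})$, the regularized noise $N^{(m)}=J_m N$ takes values in $D(T^{3/2})$ and, being a continuous $D(T^{3/2})$-valued martingale with square-integrable supremum, has paths in $L^2([0,T];D(T^{3/2}))$ $\P$-a.s. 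Thus $N^{(m)}$ satisfies Hypothesis \ref{hyp:g} pathwise, and Theorem \ref{thm:unique_ex_stoch_add_pathw} furnishes adapted pathwise solutions $X^{(m)}$ with $X^{(m)}(\omega)\in L^\infty([0,T];S)$ and selections $\eta^{(m)}\in A(\cdot,X^{(m)})$.

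The core estimate is an $m$-uniform bound on the $S$-norm. Following Lemma \ref{boundlem}, I would apply It\=o's formula to $e^{-Ct}\norm{X^{(m)}_t}_n^2$ (legitimate since $\norm{\cdot}_n$ is equivalent to $\norm{\cdot}_H$), treat the drift via the weak coercivity (A3), and bound the It\=o correction by
\[ \norm{B^{(m)}_s}_{L_2(U,H_n)}^2 \le \norm{B^{(m)}_s}_{L_2(U,S)}^2 \le \norm{B_s}_{L_2(U,S)}^2, \]
using $\norm{\cdot}_n\le\norm{\cdot}_S$ and that $J_m$ is a contraction on $S$. Choosing the weight rate $C$ and letting $n\to\infty$ cancels the coercivity term against $-C\int_0^t e^{-Cs}\norm{X^{(m)}_s}_n^2\,ds$, exactly as in the deterministic case. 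Taking the supremum in $t$, the martingale term is controlled by Burkholder--Davis--Gundy and Young's inequality and absorbed on the left, whence
\[ \supl_m\,\E\supl_{t\in[0,T]}\norm{X^{(m)}_t}_S^2 \le C\Bigl(\E\norm{x}_S^2 + \E\intl_0^T\bigl(\gamma+\norm{B_s}_{L_2(U,S)}^2\bigr)\,ds\Bigr)<\infty, \]
finite by (B3) and $\gamma\in L^1$; in particular $\{\eta^{(m)}\}$ is bounded in $L^2([0,T]\times\Omega;S^\ast)$ by (A2).

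Next I would pass to the limit. Writing $Y^{(m)}=X^{(m)}-N^{(m)}$ and using the chain rule with monotonicity of $A$, the difference of two indices obeys $\sup_t\norm{Y^{(m)}_t-Y^{(l)}_t}_H^2 \le 2\,\norm{\eta^{(m)}-\eta^{(l)}}_{L^2([0,T];S^\ast)}\norm{N^{(m)}-N^{(l)}}_{L^2([0,T];S)}$, and since $N^{(m)}=J_m N\to N$ in $L^2(\Omega;L^2([0,T];S))$ while $\eta^{(m)}$ is uniformly bounded, $\{X^{(m)}\}$ is Cauchy in $L^2(\Omega;C([0,T];H))$ with limit $X$; along a subsequence $\eta^{(m)}\rightharpoonup\eta$ in $L^2([0,T]\times\Omega;S^\ast)$ and $X^{(m)}\rightharpoonup X$ in $L^2([0,T]\times\Omega;S)$, so the integrated equation passes to $X_t=X_0-\int_0^t\eta_s\,ds+N_t$ in $S^\ast$. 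The step I expect to be the crux is identifying $\eta\in A(\cdot,X)$ without strong $S$-convergence, which I would resolve by Minty's trick driven by an energy equality. The variational It\=o formula for $\norm{\cdot}_H^2$ applied to $X^{(m)}$ gives, after taking expectations (the stochastic integral vanishing),
\[ 2\,\E\intl_0^T\dualdel{S}{\eta^{(m)}_s}{X^{(m)}_s}\,ds = \E\norm{x}_H^2 - \E\norm{X^{(m)}_T}_H^2 + \E\intl_0^T\norm{B^{(m)}_s}_{L_2(U,H)}^2\,ds, \]
and the analogous identity holds for $X$ with $B$ in place of $B^{(m)}$. Since $X^{(m)}_T\to X_T$ in $L^2(\Omega;H)$ and $B^{(m)}_s\to B_s$ in $L_2(U,H)$, the right-hand sides converge, so $\E\int_0^T\dualdel{S}{\eta^{(m)}_s}{X^{(m)}_s}\,ds\to\E\int_0^T\dualdel{S}{\eta_s}{X_s}\,ds$. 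Feeding this into $\E\int_0^T\dualdel{S}{\eta^{(m)}_s-\zeta_s}{X^{(m)}_s-v_s}\,ds\ge 0$ for arbitrary $[v,\zeta]$ in the realization $\tilde A$, and invoking its maximal monotonicity (cf.\ Proposition \ref{prop:graphclosed}), yields $\eta_s\in A(s,X_s)$, $dt\otimes\P$-a.e. The bound $\E\sup_t\norm{X_t}_S^2<\infty$ then follows by weak lower semicontinuity, right-continuity in $S$ exactly as in the proof of Theorem \ref{thm:unique_ex_det_smooth}, and uniqueness from the contraction estimate below.

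Finally, for $x\in L^2(\Omega,\hat\Fscr_0;H)$ under (B1) only, I would approximate $x$ by $x^\delta\in S$ and $B$ by $B^\delta:=J_{1/\delta}B$, which satisfies (B3) and converges to $B$ in $L^2([0,T]\times\Omega;L_2(U,H))$; the first part provides solutions $X^\delta$. It\=o's formula for $\norm{X^\delta_t-X^{\delta'}_t}_H^2$, monotonicity of $A$ (which annihilates the drift difference) and Burkholder--Davis--Gundy give
\[ \E\supl_{t\in[0,T]}\norm{X^\delta_t-X^{\delta'}_t}_H^2 \le C\Bigl(\E\norm{x^\delta-x^{\delta'}}_H^2 + \E\intl_0^T\norm{B^\delta_s-B^{\delta'}_s}_{L_2(U,H)}^2\,ds\Bigr), \]
so $\{X^\delta\}$ is Cauchy in $L^2(\Omega;C([0,T];H))$ and its limit is the unique limit solution in the sense of Definition \ref{def:stoch_soln_limit}. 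The same estimate with identical data shows uniqueness in both parts.
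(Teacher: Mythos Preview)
Your proof is correct and follows essentially the same strategy as the paper: regularize the noise so that Hypothesis \ref{hyp:g} holds pathwise, invoke the pathwise theory, derive an $m$-uniform $S$-bound via It\=o's formula for $\norm{\cdot}_n^2$ with (A3) and BDG, show Cauchyness in $L^2(\Omega;C([0,T];H))$, and identify the weak limit of the selections by Minty's trick using the $H$-energy identity. The only technical differences are cosmetic: the paper regularizes with the spectral projections $P_m$ onto eigenspaces of $T$ rather than the resolvent $J_m$ (both map $S$ into $D(T^{3/2})$ with uniform $S$-contractivity), and the paper obtains the Cauchy estimate directly by It\=o's formula for $\norm{X^{(m)}-X^{(l)}}_H^2$ rather than via the deterministic chain rule on $Y^{(m)}-Y^{(l)}$; both routes give the same conclusion. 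One small correction: the maximal monotonicity of the lifted operator $\tilde A$ on $L^2([0,T]\times\Omega;S)$ that you need for Minty's trick is Proposition \ref{prop:max_monotone}, not Proposition \ref{prop:graphclosed} (the latter only gives strong--weak graph closedness).
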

\begin{proof}
  We first make some general remarks concerning the approximation of elements $x \in S$. Since $T: D(T) \subseteq H \to H$ is a linear, positive definite, self-adjoint operator with compact resolvent there is an orthonormal basis of eigenvectors $e_i \in S$ of $H$. Let $\Hscr_m = $ span$\{e_1,...,e_m\}$ and $P_m: H \to \Hscr_m$ be the orthogonal projection onto $\Hscr_m$ in $H$. Since $(\cdot,\cdot)_S = (T^{1/2}\cdot,T^{1/2}\cdot)_H$ the restriction of $P_m$ to $S$ is the orthogonal projection onto $\Hscr_m$ in $S$. Moreover, $P_m$ can be extended continuously to $P_m: S^* \to S$ with $\|P_m y\|_{S} \le C_m\|y\|_{S^*}$.

  We consider the approximating equations
  \begin{equation}\label{eqn:stoch_add_noise_approx}\left\{\begin{aligned}
    dX^m_t+ A(t,X^m_t)\,dt &\ni B^m_t\, dW_t \\
    X_0 &= x,
  \end{aligned}\right.\end{equation}
  where $B^m := P_m B$. Then
    \[ \|P_m B\|_{L^2([0,T]\times\Omega;L_2(U,S))}^2 \le \|B\|_{L^2([0,T]\times\Omega;L_2(U,S))}^2 \]
  and $\|T P_m B\|_{L^2([0,T]\times\Omega;L_2(U,S))}^2 \le C_m \|B\|_{L^2([0,T]\times\Omega;L_2(U,S))}^2$.
  Hence 
    \[N_t^m(\omega) := \int_0^t B^m_s \,dW_s(\omega)\]
  satisfies Hypothesis \ref{hyp:g} for a.a.\ $\omega \in \Omega$. By dominated convergence $B^m \to B$ in $L^2([0,T]\times\Omega;L_2(U,H))$ and by Theorem \ref{thm:unique_ex_det_smooth} there is a solution $X^m(\omega)$ to \eqref{eqn:stoch_add_noise_approx} for a.a.\ $\omega \in \Omega$. 

  \begin{lem}\label{lemma:apriori_S_bound}
    There is $C > 0$ such that
      \[ \E\left[ \sup_{t \in [0,T]} \|X^m_t\|_S^2 \right] \le 4e^{CT} \left( \E\|x\|_S^2 + \E \int_0^T (h_s+\gamma_s)\, ds  \right),\]
    where $\gamma$ is as in Hypothesis \ref{hyp:A} and $h$ as in Hypothesis \ref{hyp:noise_2}.
    \end{lem}
  \begin{proof}
   We first note that the process $t \mapsto \int_0^t \eta^m_s \,ds$ (where $\eta^m\in A(\cdot,X^m)$ $dt\otimes\P$-a.s.) is progressively measurable and that this is sufficient for the application of It\={o}'s formula (cf.\ \cite[Theorem 4.2.5]{PrRoe}) for the approximating norm $\norm{\cdot}_n^2$. In fact, the same proof as for \cite[Theorem 4.2.5]{PrRoe} can be used. We obtain by (A3):
   \begin{align*}
      \|X^m_t\|_n^2 \,e^{-Ct}
      &= \|x\|_n^2 - \int_0^t 2 e^{-Cs}\dualdel{S}{\eta_s^m}{T_n(X^m_s)}\, ds + \int_0^t 2 e^{-Cs}(X^m_s,B^m_s\,dW_s)_n  \\
        &\hskip12pt + \int_0^t e^{-Cs}\|B^m_s\|_{L_2(U,H_n)}^2\, ds - C\int_0^t e^{-Cs} \|X^m_s\|_n^2\, ds\\
      &\le \|x\|_n^2+\int_0^t e^{-Cs} \gamma_s\,ds+C\int_0^t e^{-Cs}\norm{X_s^m}_S^2\,ds\\
      &\hskip12pt + \int_0^t 2 e^{-Cs}(X^m_s,B^m_s\,dW_s)_n 
        + \int_0^t e^{-Cs}\|B^m_s\|_{L_2(U,H_n)}^2\, ds \\
        &\hskip12pt - C\int_0^t e^{-Cs} \|X^m_s\|_n^2\, ds.
   \end{align*}
   By the Burkholder-Davis-Gundy inequality, choosing $K$ large enough and taking $n \to \infty$, we obtain
   \begin{align*}
       \E \left[\sup_{t \in [0,T]}\|X^m_t\|_S^2 \, e^{-Ct} \right] \le 2 \left(\E \|x\|_S^2 + \E\int_0^T  e^{-Cs}(2 h_s+\gamma_s) \, ds \right).
   \end{align*}
 \end{proof}
  We proceed with the proof of Proposition \ref{prop:unique_ex_stoch_add}. By It\={o}'s formula and the Burkholder-Davis-Gundy inequality
    \[ \E\left[ \sup_{t \in [0,T]} \|X_t^n-X_t^m\|_H^2\right] \le C\E \int_0^T \|B^n_s - B^m_s\|_{L_2(U,H)}^2 \,ds \to 0,\]
  for $n , m \to \infty$. Hence, $X^n \to X$ in $L^2(\Omega;C([0,T];H))$. By lower semi-continuity of the norm $\|\cdot\|_{L^2(\Omega;C([0,T];S))}$ on $L^2(\Omega;C([0,T];H))$, 
  \begin{equation}\label{eqn:S_bound_add_case}
    \E \left[\sup_{t \in [0,T]} \|X_t\|_S^2\right] \le C \left( \E\|x\|_S^2 + \E \int_0^T \left( h_s+\gamma_s \right)\, ds \right).
  \end{equation}
  By Lemma \ref{lemma:apriori_S_bound} and (A2), for some subsequence $\eta^m \rightharpoonup \bar\eta$ in $L^2([0,T]\times\Omega;S^*)$ and we obtain $\P$-a.s.
    \[ X_t = x - \int_0^t \bar\eta_s ds + \int_0^t B_s dW_s , \quad \forall t \in [0,T]. \]
  As in the proof of Lemma \ref{lemma:apriori_S_bound} we may apply It\={o}'s formula to this equation and to \eqref{eqn:stoch_add_noise_approx}. Subtracting the resulting equations yields
  \begin{align*}
    \E\left[ \|X_t^m\|_H^2-\|X_t\|_H^2 \right]
    = &- \E \left[\int_0^t 2\dualdel{S}{\eta_r^m}{X_r^m} - 2\dualdel{S}{\bar\eta_r}{X_r} dr\right] \\
      &+ \E\int_0^t \|B^m_r - B_r\|_{L_2(U;H)}^2
  \end{align*}
  and taking $\limsup\limits_{m \to \infty}$ we obtain
  \[ \limsup_{m \to \infty} \E \int_0^t \dualdel{S}{\eta_r^m}{X_r^m} \le \E \int_0^t \dualdel{S}{\bar\eta_r}{X_r} dr.\]
  By Proposition \ref{prop:max_monotone}, $A: L^2([0,T] \times \Omega;S) \to L^2([0,T] \times \Omega;S^*)$ is maximal monotone. Thus, Minty's trick implies $[X,\bar{\eta}] \in A$ $dt\otimes\P$-a.e. 

  Right continuity of $X$ in $S$ is shown as in the proof of Theorem \ref{thm:unique_ex_det_smooth}, i.e.\ \eqref{eqn:S_bound_add_case} yields weak continuity and repeating the calculations from Lemma \ref{lemma:apriori_S_bound} for all initial times $s \le t$ we can then deduce right-continuity.

  For general initial conditions $x \in L^2(\Omega,\hat{\Fscr}_0;H)$ and noise satisfying Hypothesis \ref{hyp:noise} only we consider approximations $x^m \in L^2(\Omega,\hat{\Fscr}_0;S)$ and $B^m := P_m B$ with corresponding variational solutions $X^m$. Applying It\={o}'s formula for the difference $\|X^m-X^n\|_H^2$ and using Burkholder's inequality yields
    \[ \E \left[\sup_{t\in [0,T]} \|X^m_t-X^n_t\|_H^2\right] \le C \left( \E\|x^m-x^n\|_H^2 + \E \int_0^T \|B^n_s-B^m_s\|_{L_2(U,H)}^2\, ds \right),\]
  which implies the existence of a limit $X$ of $X^m$ in $L^2(\Omega;C([0,T];H))$. For two approximating solutions $X^m$, $\bar X^m$ by the same argument we obtain
    \[ \E \left[\sup_{t\in [0,T]} \|X^m_t-\bar X^m_t\|_H^2\right] \le C \left( \E\|x^m-\bar x^m\|_H^2 + \E \int_0^T \|B^m_s-\bar B^m_s\|_{L_2(U,H)}^2\, ds \right),\]
  This implies that the limit $X$ does not depend on the approximating sequence.
\end{proof}

\vspace{4mm}
\subsubsection{Proof of Theorem \ref{thm:unique_ex_stoch_mult}}\ \\
\label{proof:unique_ex_stoch_mult}

First let $x \in L^2(\Omega,\hat\Fscr_0;S)$ and $B$ satisfy Hypothesis \ref{hyp:noise} and Hypothesis \ref{hyp:noise_2}. We construct a solution by freezing the noise. For $K > 0$ let 
  \[\Wscr^{K} := \left\{Z \in L^2(\Omega;C([0,T];H))|\ \|Z\|_{L^2(\Omega;C([0,T];S))}^2 \le K\right\}.\]
Since $\|\cdot\|_{L^2(\Omega;C([0,T];S))}^2$ is a lower semi-continuous function on $L^2(\Omega;C([0,T];H))$ the subsets $\Wscr^{K}$ are closed in $L^2(\Omega;C([0,T];H))$. For $Z \in L^2(\Omega;C([0,T];S))$ we have         
  \[ \|B_t(Z_t)\|_{L_2(U,S)}^2 \le C \|Z_t\|_S^2 + h_t \in L^1([0,T]\times\Omega). \]
By Proposition \ref{prop:unique_ex_stoch_add} there exists a unique corresponding solution $X = F(Z) \in L^2(\Omega;C([0,T];S))$ driven by the diffusion coefficients $B(Z)$. We will prove that the solution map 
  \[ F: L^2(\Omega;C([0,T];H)) \to L^2(\Omega;C([0,T];H)) \]
is a contraction for $T > 0$ small enough. Let $Z^{(1)}, Z^{(2)} \in L^2(\Omega;C([0,T];H))$. Then by It\={o}'s formula and the Burkholder-Davis-Gundy inequality
\begin{align*}
  \|F(Z^{(1)})-F(Z^{(2)})\|_{L^2(\Omega;C([0,T];H))}^2 
  &\le \E \int_0^T \|B_s(Z_s^{(1)})-B_s(Z_s^{(2)})\|_{L_2(U,H)}^2\, ds \\ 
  &\le C \E \int_0^T \|Z_s^{(1)}-Z_s^{(2)}\|_H^2\, ds \\
  &\le CT \|Z_s^{(1)}-Z_s^{(2)}\|_{L^2(\Omega;C([0,T];H))}^2.
\end{align*}
Thus, for $T > 0$ small enough $F$ is a contractive mapping. For $Z \in\Wscr^{K}$, by \eqref{eqn:S_bound_add_case}
\begin{align*}
  \E\left[ \sup_{t \in [0,T]} \|X_t\|_S^2\right] 
  &\le 4e^{CT} \left( \E\|x\|_S^2 + \E \int_0^T C\|Z_s\|_S^2 ds + \E \int_0^T \left( h_s + \gamma_s \right) \,ds \right)\\
  &\le 4e^{CT} \left( \E\|x\|_S^2 + C T \|Z_s\|_{L^2(\Omega;C([0,T];S))}^2  +  \E \int_0^T \left( h_s + \gamma_s \right) \,ds \right)\\
  &\le 4e^{CT} \left(\E\|x\|_S^2 + C T K  +  \E \int_0^T \left( h_s + \gamma_s \right)\, ds\right) 
\end{align*}
Choosing $K \ge 8e^{CT} \left( \E\|x\|_S^2 +  \E \int_0^T \left( h_s + \gamma_s \right)\, ds\right)$ thus yields
\begin{align*}
  \E\left[ \sup_{t \in [0,T]} \|X_t\|_S^2\right] 
  &\le \frac{K}{2} + 4Ce^{CT} T K.
\end{align*}
Hence, for $T\ge 0$ small enough, $F$ leaves $\Wscr^{K}$ invariant and is contractive. By Banach's fixed point theorem, there is a unique fixed point $X \in  L^2(\Omega;C([0,T];H))$, i.e.\ $F(X) = X$ or in other words
  \[ dX_t + A(X_t)\,dt \ni B(X_t)\,dW_t. \]
By Theorem \ref{prop:unique_ex_stoch_add} we have
  \[ \E\sup_{t \in [0,T]} \|X_t\|_S^2 < \infty \]
and $X$ is $\P$-a.s.\ right-continuous in $S$.

\vspace{4mm}
\subsubsection{Proof of Theorem \ref{thm:unique_ex_stoch_mult_all_ic}}\ \\
\label{proof:unique_ex_stoch_mult_all_ic}

Let $x \in L^2(\Omega,\hat\Fscr_0;H)$, $B$ satisfying Hypothesis \ref{hyp:noise} only, $x^n \in L^2(\Omega,\hat\Fscr_0;S)$ with $x^n \to x$ in $L^2(\Omega,\hat\Fscr_0;H)$ and $B^m := P_m B$, thus satisfying Hypothesis \ref{hyp:noise} and Hypothesis \ref{hyp:noise_2}. By It\={o}'s formula and Burkholder's inequality 
\begin{align*}
  &\E\sup_{t \in [0,T]} \|X_t^n - X_t^m\|^2_H \, e^{-Ct} \\
  &\le C\left( \E\|x^n-x^m\|_H^2 + \E\int_0^T \|B_s^n(X_s^n)-B_s^m(X_s^m)\|_{L_2(U;H)}^2 ds\right) \\
  &\le C \E\|x^n-x^m\|_H^2 + C\E\int_0^T \|B_s^n(X_s^n)-B_s(X_s^n)\|_{L_2(U;H)}^2  ds  \\
    &\hskip12pt + C\E\int_0^T \|B_s(X_s^m)-B_s^m(X_s^m)\|_{L_2(U;H)}^2  ds \to 0,
\end{align*}
by dominated convergence. Hence, there is a limit $X^n \to X \in L^2(\Omega;C([0,T];H))$. Similar arguments yield the independence of $X$ from the approximating sequence.

\vspace{4mm}
\subsection{Markov processes and ergodicity}\ \\
\label{sec:proof:ergo}
In the following assume that the assumptions considered in Section \ref{sec:ergodicity} are satisfied.

\begin{proof}[Proof of Proposition \ref{prop:markov_gen}:]
  As in \cite[Proposition 4.3.5]{PrRoe} we note that by \eqref{contr} it is enough to show
    \[ \E_x[G(\pi_{t_1},...,\pi_{t_n})F(\pi_{t+s})] = \int_\Omega G(\pi_{t_1}(\omega),...,\pi_{t_n}(\omega)) \E_{\pi_s(\omega)}[F(\pi_t)]\ dP_x(\omega), \]
  for all $0 \le t_1 \le ... \le t_n \le s$, $G: H^n \to \R$ continuous, bounded and $F \in C_b(H)$.
  Equivalently
  \begin{equation}\label{eqn:Markov_prop_for_X}\begin{split}
     &\E[G(X(t_1,0;\cdot)x,...,X(t_n,0;\cdot)x)F(X(t+s,0;\cdot)x)] \\
     &= \int_\Omega G(X(t_1,0;\omega)x,...,X(t_n,0;\omega)x) \E[F(X(t,0;\cdot)X(s,0;\omega)x)]\ d\P(\omega).
  \end{split}\end{equation}   

  Let us first consider the case of regular initial conditions $x \in S$ and additive, pathwise $D(T^{3/2})$-regular noise (Theorem \ref{thm:unique_ex_stoch_add_pathw}). By Corollary \ref{cor:gen_RDS}, $X(t,s;\omega)x$ is a stochastic flow, i.e.
    \[ X(t,s;\omega)x = X(t,r;\omega)X(r,s;\omega)x, \quad \forall s \le r \le t \]
  and a cocycle
    \[ X(t,s;\omega)x = X(t-s,0;\theta_s\omega), \quad \forall s \le t.\]
  Thus:
    \[ X(t,0;\cdot)X(s,0;\omega)x = X(t+s,s;\theta_{-s}\cdot)X(s,0;\omega)x. \]
      Since $X(t+s,s;\cdot)$ is independent of $\hat\Fscr_s$ we conclude
  \begin{align*}
    \E[F(X(t,0;\cdot)X(s,0;\omega)x)] 
    &= \E[F(X(t+s,s;\cdot)X(s,0;\omega)x)] \\
    &= \E[F(X(t+s,s;\cdot)X(s,0;\cdot)x)|\hat\Fscr_s](\omega) \\
    &= \E[F(X(t+s,0;\cdot)x)|\hat\Fscr_s](\omega)
  \end{align*}
  and thus
  \begin{align*}
     &\int_\Omega G(X(t_1,0;\omega)x,...,X(t_n,0;\omega)x) \E[F(X(t,0;\cdot)X(s,0;\omega)x)]\ d\P(\omega) \\
     &= \int_\Omega G(X(t_1,0;\omega)x,...,X(t_n,0;\omega)x) \E[F(X(t+s,0;\cdot)x)|\hat\Fscr_s](\omega) \ d\P(\omega) \\
     &= \E[G(X(t_1,0;\cdot)x,...,X(t_n,0;\cdot)x)F(X(t+s,0;\cdot)x)].
  \end{align*}
    
  For initial conditions $x \in H$ and noise $B$ satisfying Hypothesis \ref{hyp:noise} only, solutions were constructed as limits of pathwise solutions $X^m(\cdot,0,\cdot)x \to X(\cdot,0;\cdot)x$ in $L^2(\Omega;C([0,T];H))$. Using uniform Lipschitz continuity of the pathwise solutions in the initial condition we realize that \eqref{eqn:Markov_prop_for_X} is preserved in the limit.
\end{proof}

Recall that $u(\cdot)x \in C([0,T];H)$ denotes the unique solution to \eqref{main_determ}.

\begin{proof}[Proof of Remark \ref{rem:suff_L2}:]
  (1): The Lyapunov function $\Theta(x) := c\|x\|_H^\alpha$ is measurable and has bounded sublevel sets. It remains to prove convergence to $0$ uniformly on sublevel sets. If $\a =2$ this follows immediately from Gronwall's inequality. Thus, consider $1 \le \a < 2$.

  First let $x \in S \cap B$.  By the chain rule of calculus
   \[ \frac{d}{dt}\|u_t\|_H^2 = 2 \dualdel{S}{-\eta_t}{u_t} \le - c\|u_t\|_H^\alpha = - c\left( \|u_t\|^2_H \right)^\frac{\alpha}{2},\quad \text{ for a.e.\ } t \in [0,\infty),\]
  where $\alpha$ is as in \eqref{extinction_sufficient} and $\eta_t \in A(u_t)$ for a.e.\ $t \in [0,T]$. Hence, informally $f(t) := \|u_t\|_H^2$ is a subsolution to the ordinary differential equation
   \[ f'(t) = -cf(t)^\frac{\alpha}{2},\quad \text{for a.e. } t \in [0,T].\]
  Hence
   \[ \|u_t\|_H^2 \le \left( \left(\|x\|_H^{2-\alpha} - ct \frac{2-\a}{2} \right) \vee 0 \right)^\frac{2}{2-\alpha} \le \left( \left(\|B\|_H^{2-\alpha} - ct \frac{2-\a}{2}  \right) \vee 0 \right)^\frac{2}{2-\alpha}. \]
  By continuity in the initial condition the same inequality holds for all $x \in B$. We conclude $u_t \equiv 0$ for all $t \ge T_B := \|B\|_H^{2-\alpha}\frac{2}{c(2-\a)}$.

  (2): By monotonicity of $A$, $u_t$ is non-expansive, i.e.
      $$\|u(t)x-u(t)y\|_H \le \|x-y\|_H, \quad \forall x,y \in H.$$
      Therefore, the convergence to $0$ is uniform on compact sets.  
\end{proof}     

In order to prove Theorem \ref{thm:ergodicity}, we need some preparation. We start by proving stochastic stability for equation \eqref{eqn:spde_erg}.
\begin{lem}\label{stoch_stable_lemma}
Suppose that Hypotheses \ref{hyp:A}, \ref{hyp:noise_2}, \ref{hyp:A_2} hold.
  For each $T > 0$, $\eps >0$ and $B \subseteq H$ bounded we have
   \[ \P[\|X(T,0;\cdot)x-u(T)x\|_H^2 \le \eps] > 0,\]
  uniformly for all $x \in B$.
\end{lem}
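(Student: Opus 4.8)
The plan is to control the stochastic solution by the deterministic one through the substitution $v_t := X(t,0;\cdot)x - W^B_t$, which (pathwise) solves the random deterministic inclusion $\tfrac{d}{dt}v_t = -\eta^v_t$, $\eta^v_t \in A(v_t + W^B_t)$, $v_0 = x$, and to compare $v$ with the solution $u(\cdot)x$ of \eqref{main_determ}. Since $X(T,0;\cdot)x - u(T)x = (v_T - u_T) + W^B_T$ and the embedding $S \subset H$ is continuous, it suffices to make $\|v_T - u_T\|_H$ and $\|W^B_T\|_S$ simultaneously small, and this I will do on the event
\[ E_\delta := \Big\{\sup_{s \in [0,T]}\|W^B_s\|_S \le \delta\Big\}. \]

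First I would apply the chain rule (cf.\ \cite[\S III.4]{Show}) to $t \mapsto \|v_t - u_t\|_H^2$ and split the pairing using $v_t - u_t = (v_t + W^B_t) - u_t - W^B_t$:
\[ \frac{d}{dt}\|v_t - u_t\|_H^2 = -2\dualdel{S}{\eta^v_t - \eta^u_t}{(v_t + W^B_t) - u_t} + 2\dualdel{S}{\eta^v_t - \eta^u_t}{W^B_t}, \]
with $\eta^u_t \in A(u_t)$. The first term is nonpositive by monotonicity (A1), since $\eta^v_t \in A(v_t + W^B_t)$ and $\eta^u_t \in A(u_t)$; hence, integrating and using $v_0 = u_0 = x$,
\[ \|v_T - u_T\|_H^2 \le 2\sup_{s \in [0,T]}\|W^B_s\|_S \int_0^T \big(\|\eta^v_s\|_{S^\ast} + \|\eta^u_s\|_{S^\ast}\big)\,ds. \]

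The crucial step is then an a priori bound on these two selections in $L^1([0,T];S^\ast)$ that is uniform over $x \in B$ and over the realizations in $E_\delta$; this is where Hypothesis \ref{hyp:A_2} enters. For $u$, the chain rule gives $\tfrac{d}{dt}\|u_t\|_H^2 = -2\dualdel{S}{\eta^u_t}{u_t} \le -c\|\eta^u_t\|_{S^\ast} + C$ by (A5), so $\int_0^T\|\eta^u_s\|_{S^\ast}\,ds \le c^{-1}(\|x\|_H^2 + CT)$. For $v$, writing $2\dualdel{S}{\eta^v_t}{v_t} = 2\dualdel{S}{\eta^v_t}{v_t + W^B_t} - 2\dualdel{S}{\eta^v_t}{W^B_t}$ and applying (A5) to the first term, on $E_\delta$ with $\delta \le c/4$ the noise contribution is absorbed and one obtains $\int_0^T\|\eta^v_s\|_{S^\ast}\,ds \le \tfrac{2}{c}(\|x\|_H^2 + CT)$. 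Since $B$ is bounded, both right-hand sides are dominated by a constant depending only on $R := \sup_{x \in B}\|x\|_H$, $c$, $C$, $T$. Combining with the previous display yields $\|X(T,0;\cdot)x - u(T)x\|_H^2 \le C(R,c,T)\,\delta$ on $E_\delta$, uniformly in $x \in B$, which is $\le \eps$ as soon as $\delta$ is chosen small enough.

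It remains to note that $\P[E_\delta] > 0$: as $W^B$ is a centered, continuous, trace-class Wiener process in $S$, the origin lies in the support of its law on $C([0,T];S)$, so every neighborhood of $0$ has positive measure. Since $\P[E_\delta]$ does not depend on $x$, the lower bound $\P[\|X(T,0;\cdot)x - u(T)x\|_H^2 \le \eps] \ge \P[E_\delta] > 0$ holds uniformly over $x \in B$. I expect the main difficulty to be the uniform selection estimate — keeping the $L^1([0,T];S^\ast)$ bounds on $\eta^v$ and $\eta^u$ genuinely independent of both $x \in B$ and the noise path on $E_\delta$ — and, relatedly, justifying the chain-rule computations for limit solutions when $x \in H \setminus S$ by passing to the limit in the approximations $x^\delta \in S$ while retaining dependence on $x$ only through $\|x\|_H$.
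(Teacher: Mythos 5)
Your proposal is correct and follows essentially the same route as the paper's own proof: the transformation $v=X-W^B$ (the paper's $Y$), restriction to the positive-probability event where $\sup_{s\in[0,T]}\|W^B_s\|_S$ is small, the $L^1([0,T];S^\ast)$ selection bounds obtained from (A5) by splitting the pairing against $v$ versus $v+W^B$, the monotonicity comparison of $v$ with $u$, and the extension from $x\in S$ to $x\in H$ by nonexpansivity in the initial condition. The only cosmetic difference is that you justify $\P[E_\delta]>0$ explicitly via the support of the Wiener measure, which the paper leaves implicit.
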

\begin{proof}
  Since $W^B_t$ is a trace class Wiener process on $S$, for each $\delta > 0$, $T > 0$ we can find a subset $\Omega^\delta \subseteq \Omega$ of positive mass such that $\sup\{\|W^B_t(\omega)\|_S|\ t \in [0,T]\} < \frac{\delta}{2}$ for all $\omega \in \Omega^\delta$. Let $x \in S$. For $\omega \in \Omega^\delta$ we have by (A5)
  \begin{align*}
     &\|Y(t,0;\omega)x\|_H^2 
     = \|x\|_H^2 - 2 \int_0^t \dualdel{S}{\eta_r}{X(r,0;\omega)x-W^B_r(\omega)} dr \\
     &\le \|x\|_H^2 - c \int_0^t \|\eta_r\|_{S^*} dr - 2 \int_0^t \dualdel{S}{\eta_r}{-W^B_r(\omega)} dr + C t\\
     &\le \|x\|_H^2 - \left( c-2\sup_{r\in[0,T]}\|W^B_r(\omega)\|_S \right) \int_0^t \|\eta_r\|_{S^*} dr + C t\\
     &\le \|x\|_H^2 - \left( c- \delta \right) \int_0^t \|\eta_r\|_{S^*} dr + C t,
  \end{align*}
  where $\eta_r \in A(X(r,0;\omega)x)$ for a.e.\ $r \in [0,T]$ and $X=Y+W^B$, as before. The same argument can be applied for $u(t)x$ to obtain
     \[\int_{0}^{T} \|\tilde\eta_r\|_{S^*} dr + \int_{0}^{T} \|\eta_r\|_{S^*} dr \le C (1+ \|x\|_H^2),\]
  where $\tilde\eta_r \in A(u(t)x)$ and $\eta_r \in A(X(r,0;\omega)x)$ for a.e.\ $r \in [0,T]$ and all $\delta$ small enough. Using the monotonicity of $A$ we estimate the difference to the deterministic solution by 
  \begin{align*}
     \|Y(t,0;\omega)x-u(t)x\|_H^2  
     &=  \int_0^t \dualdel{S}{\eta_r-\tilde \eta_r}{Y(r,0;\omega)x-u(r)x} dr\\
     &\le \|\eta-\tilde \eta\|_{L^1([0,T];S^*)} \|W^B(\omega)\|_{L^\infty([0,T];S)} \\
     &\le C (1+ \|x\|_H^2) \delta,
  \end{align*}
  for $\omega \in \Omega^\delta$. By continuity in $x$ this inequality remains true for all $x \in H$. Hence
  \begin{align*}
    \|X(t,0;\omega)x-u(t)x\|_H^2 
    &= \|Y(t,0;\omega)x+W^B_t(\omega)-u(t)x\|_H^2 \\
    &\le C (1+ \|x\|_H^2) \delta,\quad \forall \omega \in \Omega^\delta.
  \end{align*}
  Choosing $\delta > 0$ small enough thus yields the claim.
\end{proof}

Next, we prove the asymptotic concentration of the average mass on bounded (compact resp.) sets.
Recall the definition of $Q^T(x,\cdot)$, right after \eqref{def:QT}.

\begin{lem}\label{ergodicity_lemma_1}
  Suppose that Hypotheses \ref{hyp:A}, \ref{hyp:noise_2}, \ref{hyp:extinction} hold. For each $\varepsilon > 0$ and each bounded set $A \subseteq H$ there exists a constant $R(\eps,\|A\|_H)>0$ such that the sublevel set $B:=\{\Theta\le R(\eps,\|A\|_H)\} \subseteq H$ satisfies
    \[ \inf_{x \in A} \liminf_{T \to \infty} Q^T(x,B) > 1- \varepsilon. \]
\end{lem}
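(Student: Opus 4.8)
The plan is to derive the claim from an It\^o-type energy estimate for the Lyapunov function $\Theta$ combined with Chebyshev's inequality. I first treat regular initial data $x\in S$, where $X=X(\cdot,0;\cdot)x$ enjoys the full regularity $\E\sup_{t}\norm{X_t}_S^2<\infty$ from Proposition \ref{prop:unique_ex_stoch_add}, so that It\^o's formula for $\norm{\cdot}_H^2$ applies exactly as in the proof of Lemma \ref{lemma:apriori_S_bound} (via the approximating norms $\norm{\cdot}_n$ and the passage $n\to\infty$). Writing $X_t=x-\int_0^t\eta_s\,ds+W^B_t$ with $\eta_s\in A(X_s)$, this gives
\[ \norm{X_t}_H^2=\norm{x}_H^2-2\int_0^t\dualdel{S}{\eta_s}{X_s}\,ds+t\,\norm{B}_{L_2(U,H)}^2+2\int_0^t(X_s,B\,dW_s)_H. \]
Invoking the lower bound $\dualdel{S}{\eta_s}{X_s}\ge\Theta(X_s)$ of Hypothesis \ref{hyp:extinction}, taking expectations (the stochastic integral is a genuine martingale, its integrability following from $\E\sup_t\norm{X_t}_S^2<\infty$ and $\norm{B}_{L_2(U,H)}<\infty$) and dropping the nonnegative term $\E\norm{X_t}_H^2$, I obtain the time-averaged energy estimate
\[ \E\int_0^T\Theta(X_s)\,ds\le\tfrac12\norm{x}_H^2+\tfrac{T}{2}\norm{B}_{L_2(U,H)}^2. \]

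To convert this into concentration on $B=\{\Theta\le R\}$, I use that $Q^T(x,H\setminus B)=\tfrac1T\int_0^T\P[\Theta(X_r)>R]\,dr$. Chebyshev's inequality together with the energy estimate then yields
\[ Q^T(x,H\setminus B)\le\frac1{RT}\,\E\int_0^T\Theta(X_r)\,dr\le\frac1R\left(\frac{\norm{x}_H^2}{2T}+\frac12\norm{B}_{L_2(U,H)}^2\right). \]
Passing to $\limsup_{T\to\infty}$, the $x$-dependent term vanishes, so that
\[ \liminf_{T\to\infty}Q^T(x,B)\ge1-\frac{\norm{B}_{L_2(U,H)}^2}{2R}, \]
where the right-hand side is uniform in $x\in S$.

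It remains to remove the restriction $x\in S$, and this is the step I expect to require the most care. For $x$ in the bounded set $A\subseteq H$ I would approximate by $x^m\in S$ with $x^m\to x$ in $H$ and $\norm{x^m}_H\to\norm{x}_H$; by \eqref{contr} the corresponding solutions satisfy $X^{x^m}\to X^{x}$ in $L^2(\Omega;C([0,T];H))$, whence the laws of $X_r^{x^m}$ converge weakly to that of $X_r^{x}$ for a.e.\ $r$. Exploiting lower semicontinuity of $\Theta$ (equivalently, openness of $\{\Theta>R\}$, which holds in all cases covered by Remark \ref{rem:suff_L2}), the portmanteau theorem and Fatou's lemma give $Q^T(x,H\setminus B)\le\liminf_m Q^T(x^m,H\setminus B)$, so the bound of the previous paragraph is inherited by every $x\in A$. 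Choosing $R=R(\eps,\norm{A}_H)>\norm{B}_{L_2(U,H)}^2/(2\eps)$ finally yields $\inf_{x\in A}\liminf_{T\to\infty}Q^T(x,B)>1-\eps$. The only other points, both routine, are the justification of the variational It\^o formula in this possibly multi-valued setting (already supplied by the approximating-norm argument of Lemma \ref{lemma:apriori_S_bound}) and the observation that $\norm{B}_{L_2(U,H)}$ is a finite constant in the additive-noise ergodic framework of Section \ref{sec:ergodicity}.
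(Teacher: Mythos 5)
Your proof is correct and follows essentially the same route as the paper's: It\^o's formula for $\norm{\cdot}_H^2$ combined with the Lyapunov lower bound $\dualdel{S}{\eta_s}{X_s}\ge\Theta(X_s)$ from Hypothesis \ref{hyp:extinction} gives the time-averaged estimate $\E\int_0^T\Theta(X_s)\,ds\le\tfrac12\norm{x}_H^2+\tfrac{T}{2}\norm{B}_{L_2(U,H)}^2$, and Chebyshev's inequality applied to the sublevel set, averaged in time, yields the claim after choosing $R$ large (the paper states this same estimate in the form $\tfrac1T\E\int_0^T\Theta(X_s)\,ds\le C(\norm{x}_H^2+1)$ for $T\ge 1$). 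Your additional approximation step for general $x\in H$ addresses a point the paper's terse proof passes over silently, but be aware that the lower semicontinuity of $\Theta$ you invoke there is not part of Hypothesis \ref{hyp:extinction}, which only assumes measurability; it does hold in all of the paper's applications (cf.\ Proposition \ref{prop:tendstozero} and Theorem \ref{thm:loglyapunov}), so your argument is sound exactly where the result is used.
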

\begin{proof}
Let $\eps>0$, $A\subset H$ be bounded and $x\in A$. For $R>0$, $K_R:=\{\Theta\le R\}$ is a measurable set. By It\={o}'s formula,
\[\frac{1}{T}\E\int_0^T\Theta(X(s,0;\cdot)x)\,ds\le C(\norm{x}^2_H+1),\quad\text{for $T\ge 1$.}\]
 Thus,
\begin{align*}
  Q^T(x,K_R)&=\frac{1}{T}\int_0^T P_s(x,K_R)\,ds \\
  &\ge\frac{1}{T}\int_0^T\left(1-\frac{\E\left[\Theta(X(s,0;\cdot)x)\right]}{R}\right)\,ds\ge 1-\frac{C}{R}(\norm{A}^2_H+1).
\end{align*}
Choosing $R(\eps,\|A\|_H)>\eps^{-1}C(\norm{A}^2_H+1)$ yields the claim with $B:=K_{R(\eps,\|A\|_H)} = \{x \in H|\ \Theta(x) \le R(\eps,\|A\|_H)\}$.
\end{proof}

We are now ready to prove a locally uniform lower bound for the average mass at $0$.

\begin{lem}
Suppose that Hypotheses \ref{hyp:A}, \ref{hyp:noise_2}, \ref{hyp:A_2}, \ref{hyp:extinction} hold. For each $\delta > 0$ and each bounded set $A \subseteq H$
   \[ \inf_{x \in A} \liminf_{T \to \infty} Q^T(x,B_\delta(0)) > 0. \]
\end{lem}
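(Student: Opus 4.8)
The plan is to combine the three ingredients already established: the concentration estimate of Lemma~\ref{ergodicity_lemma_1}, the uniform extinction of the deterministic flow granted by Hypothesis~\ref{hyp:extinction}, and the stochastic stability of Lemma~\ref{stoch_stable_lemma}. The key idea is that a fixed, uniformly positive fraction of the averaged occupation mass sits on a bounded sublevel set $B$; from any point of $B$ the deterministic flow is driven into $B_{\delta/2}(0)$ in a uniform time $t_0$; and by stochastic stability the noisy trajectory follows it into $B_\delta(0)$ with uniformly positive probability. One then ``restarts'' the occupation measure on $B$ via the Markov property of Proposition~\ref{prop:markov_gen}.

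First I would fix $\eps=\tfrac12$ and apply Lemma~\ref{ergodicity_lemma_1} to the bounded set $A$ to obtain a radius $R=R(\tfrac12,\|A\|_H)$ so that $B:=K_R=\{\Theta\le R\}$ is bounded and satisfies $\inf_{x\in A}\liminf_{T\to\infty}Q^T(x,B)>\tfrac12$. Applying Hypothesis~\ref{hyp:extinction} with this $R$, the uniform extinction $\sup_{y\in B}\|u(t)y\|_H\to 0$ furnishes a time $t_0>0$ with $\sup_{y\in B}\|u(t_0)y\|_H<\delta/2$. I would then invoke Lemma~\ref{stoch_stable_lemma} with $T=t_0$, the bounded set $B$, and $\eps=(\delta/2)^2$: on the event $\{\|X(t_0,0;\cdot)y-u(t_0)y\|_H\le\delta/2\}$, which has uniformly positive probability over $y\in B$, the triangle inequality gives $\|X(t_0,0;\cdot)y\|_H<\delta$. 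Hence in kernel notation
  \[ p_0 := \inf_{y\in B} P_{t_0}(y,B_\delta(0)) > 0. \]

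The heart of the argument is the restart step, valid for all $x\in A$ by the Chapman--Kolmogorov identity coming from Proposition~\ref{prop:markov_gen}: for $r\ge t_0$,
  \[ P_r(x,B_\delta(0)) = \int_H P_{t_0}(y,B_\delta(0))\,P_{r-t_0}(x,dy) \ge p_0\,P_{r-t_0}(x,B). \]
Dropping the nonnegative contribution of $[0,t_0]$ and substituting $s=r-t_0$ then yields
  \[ Q^T(x,B_\delta(0)) \ge \frac{p_0}{T}\int_0^{T-t_0} P_s(x,B)\,ds = p_0\,\frac{T-t_0}{T}\,Q^{T-t_0}(x,B). \]
Letting $T\to\infty$, the prefactor tends to $1$ and $\liminf_T Q^{T-t_0}(x,B)=\liminf_T Q^T(x,B)$, so $\liminf_{T\to\infty}Q^T(x,B_\delta(0))\ge p_0\,\liminf_{T\to\infty}Q^T(x,B)$ for every $x\in A$. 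Taking the infimum over $x\in A$ and inserting the concentration bound from the first step gives $\inf_{x\in A}\liminf_{T\to\infty}Q^T(x,B_\delta(0))\ge p_0/2>0$, as claimed.

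I expect the main obstacle to be the bookkeeping of uniformity: the constant $p_0$ must be extracted uniformly over the \emph{entire} sublevel set $B$, not merely over $A$, which is precisely why Lemma~\ref{stoch_stable_lemma} is phrased with uniformity over bounded sets and why the extinction in Hypothesis~\ref{hyp:extinction} is demanded uniformly on sublevel sets. A secondary point to verify is that the Chapman--Kolmogorov restart applies to the limit solutions considered here (initial data in $H$, noise satisfying only Hypothesis~\ref{hyp:noise}); this is guaranteed since Proposition~\ref{prop:markov_gen} establishes the Markov property at exactly that level of generality.
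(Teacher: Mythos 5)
Your proposal is correct and follows essentially the same route as the paper's proof: the same three ingredients (Lemma \ref{ergodicity_lemma_1} with $\eps=\tfrac12$, the uniform extinction time from Hypothesis \ref{hyp:extinction}, and the uniform stochastic stability of Lemma \ref{stoch_stable_lemma}), combined through the same Chapman--Kolmogorov restart, yielding the same bound $\gamma_1/2$. The only cosmetic difference is that you discard the contribution of $[0,t_0]$ and compare with $Q^{T-t_0}(x,B)$, whereas the paper shifts the integration variable by $T_0$ inside the time average; these are the same computation.
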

\begin{proof}
  Let $\delta > 0$, $A \subseteq H$ be bounded, $x \in A$ and $B=K_{R(\frac{1}{2},\|A\|_H)}$ be as in Lemma \ref{ergodicity_lemma_1}. By Hypothesis \ref{hyp:extinction}, there exists a $T_0$ corresponding to $B$ such that $\|u_{T_0}^z\|_H \le \frac{\delta}{2}$ for all $z \in B$. Using Lemma \ref{stoch_stable_lemma} yields
  \begin{align}\label{eqn:concentration_at_0}
    P_{T_0}(z,B_\delta(0)) 
    &=  \P(\|X^z_{T_0}\|_H \le \delta) \ge \P\left(\|X^z_{T_0}-u(T_0)z\|_H \le \frac{\delta}{2}\right) \ge \gamma_1 > 0,
  \end{align}
  where $\gamma_1=\gamma_1(T_0,\delta)$ is independent of $z \in B$. Thus
  \begin{align*}
     \liminf_{T \to \infty} Q^T(x,B_\delta(0))
     &= \liminf_{T \to \infty} \frac{1}{T} \int_0^T P_s(x,B_\delta(0))\,ds \\
     &= \liminf_{T \to \infty} \frac{1}{T} \int_0^T P_{s+T_0}(x,B_\delta(0))ds \\
     &= \liminf_{T \to \infty} \frac{1}{T} \int_0^T \int_H P_s(x,dz) P_{T_0}(z,B_\delta(0))\, ds \\
     &\ge \liminf_{T \to \infty} \frac{1}{T} \int_0^T \int_B P_s(x,dz) P_{T_0}(z,B_\delta(0))\, ds \\
     &\ge \liminf_{T \to \infty} \gamma_1 \frac{1}{T} \int_0^T \int_B P_s(x,dz)\, ds \\
     &\ge \gamma_1 \liminf_{T \to \infty}  Q^T\big(x,B \big) \ge \frac{\gamma_1}{2}  >  0,
  \end{align*}
  where $\gamma_1 = \gamma_1(T_0,\delta) = \gamma_1(\|A\|_H,\delta)$.
\end{proof}
We conclude that Theorem \cite[Theorem 2]{KPS} can be applied to yield the claim of Theorem \ref{thm:ergodicity}.

\section{Examples}\label{sect:ex}

In the following we present several examples of singular SPDE which can be treated by the general results discussed above. Let us first consider the case in which the drift is given as the subgradient of a convex function. Let $S \subseteq H \subseteq S^*$ be a Gelfand triple of Hilbert spaces and $T$, $T_n$, $J_n$ as in Section \ref{sec:det_case}.

\begin{prop}\label{prop:subgradient}
  Let $\phi: S \to \R$ be a lower semi-continuous, convex function such that $\inf_{u\in S}\phi(u)>-\infty$. Then $A := \partial\phi: S \to 2^{S^*}$ satisfies $\textup{(A1)}$, $\textup{(A4)}$, $\textup{(A5)}$.
  
  If
  \begin{equation}\label{eq:phi-quadratic-bound}
    \phi(u) \le C\left(1+\norm{u}_S^2\right),\quad \forall u \in S,
  \end{equation}
  for some $C> 0$, then $A$ satisfies $\textup{(A2)}$.

  If in addition, $\phi$ is non-expansive with respect to $J_n$, i.e.\ $\phi(J_n u) \le \phi(u)$ for all $u \in S$, $n\in\N$ then $A$ satisfies $\textup{(A3)}$.
\end{prop}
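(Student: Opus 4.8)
The plan is to verify the five conditions separately, handling the unconditional ones \textup{(A1)}, \textup{(A4)}, \textup{(A5)} first and then invoking the growth bound \eqref{eq:phi-quadratic-bound} for \textup{(A2)} and the non-expansivity for \textup{(A3)}. For \textup{(A1)} I would invoke Rockafellar's theorem: the subdifferential $\partial\phi$ of a proper, convex, lower semi-continuous function on the Banach space $S$ is maximal monotone as an operator $S\to 2^{S^*}$. Nonemptiness of its values follows because a finite convex lower semi-continuous function on $S$ is automatically continuous on the interior of its (full) effective domain, i.e.\ everywhere (a standard consequence of the Baire category theorem applied to the closed sublevel sets), and a continuous convex function is subdifferentiable at every point. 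Condition \textup{(A4)} is immediate since $A=\partial\phi$ does not depend on $t$ (cf.\ the remark following Hypothesis \ref{hyp:A}).

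The crucial point is \textup{(A5)}. The key observation is that, by the continuity just mentioned, $\phi$ is bounded above, say by $M$, on some closed ball $\{\norm{v}_S\le\rho\}$ centered at the origin. Then for any $[x,y]\in A$ and any $v$ with $\norm{v}_S\le\rho$, the subgradient inequality $\phi(v)\ge\phi(x)+\dualdel{S}{y}{v-x}$ rearranges to $\dualdel{S}{y}{x-v}\ge\phi(x)-\phi(v)\ge\inf_{u\in S}\phi(u)-M$. Taking the supremum over $\norm{v}_S\le\rho$ extracts the term $\rho\norm{y}_{S^*}$ on the left, whence $\dualdel{S}{y}{x}\ge\rho\norm{y}_{S^*}+\inf\phi-M$; doubling yields \textup{(A5)} with $c=2\rho$ and $C=2(M-\inf\phi)$. (Equivalently this is the Fenchel identity $\dualdel{S}{y}{x}=\phi(x)+\phi^*(y)$ combined with the elementary lower bound $\phi^*(y)\ge\rho\norm{y}_{S^*}-M$.)

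For \textup{(A2)}, under the quadratic bound \eqref{eq:phi-quadratic-bound}, I would again start from the subgradient inequality, now in the form $\dualdel{S}{y}{w}\le\phi(x+w)-\phi(x)\le C(1+\norm{x+w}_S^2)-\inf\phi$. Testing with $\norm{w}_S=s$ and passing to the supremum gives $\norm{y}_{S^*}\le s^{-1}\bigl(C(1+\norm{x}_S^2)-\inf\phi\bigr)+2C\norm{x}_S+Cs$. The only subtlety is that the naive choice $s=1$ produces merely quadratic growth in $\norm{x}_S$, whereas optimizing over $s>0$ (balancing the first and last terms at $s\sim\norm{x}_S$) collapses this to the linear bound $\norm{y}_{S^*}\le 4C\norm{x}_S+f$ with a constant $f$, which is exactly the assertion \textup{(A2)}.

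Finally \textup{(A3)} becomes the easiest step once non-expansivity is assumed. Since $J_nx\in D(T)\subset S$ and $T_nx=n(x-J_nx)$, the subgradient inequality tested at $v=J_nx$ gives $\dualdel{S}{y}{T_nx}=n\,\dualdel{S}{y}{x-J_nx}\ge n\bigl(\phi(x)-\phi(J_nx)\bigr)\ge 0$ by the hypothesis $\phi(J_nx)\le\phi(x)$; hence \textup{(A3)} holds even with $\gamma\equiv 0$ and $C=0$, uniformly in $n$. I expect the main obstacle to be \textup{(A5)}: one must identify a ball \emph{centered at the origin} (which is precisely what the continuity/Baire argument delivers, as opposed to the off-center ball one gets directly from Baire) so that the supremum over the ball extracts a genuinely positive multiple of $\norm{y}_{S^*}$; the linearization trick in \textup{(A2)} is the second place where care is needed.
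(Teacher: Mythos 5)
Your proof is correct and follows essentially the same route as the paper's: continuity of $\phi$ (hence everywhere-nonempty subdifferentials and maximal monotonicity by Rockafellar's theorem) for \textup{(A1)}/\textup{(A4)}, boundedness of $\phi$ on a ball centered at the origin combined with the subgradient inequality and a supremum over that ball for \textup{(A5)}, and testing the subgradient inequality at $J_n u$ with $T_n=n(1-J_n)$ for \textup{(A3)}. The only cosmetic divergence is \textup{(A2)}: you optimize the norm $s$ of the test increment directly in the subgradient inequality, whereas the paper packages the identical optimization as the conjugate bound $\phi^\ast(v)\ge\norm{v}_{S^\ast}^2/(4C)-C$ together with Young's (Fenchel's) equality; both computations yield the same linear bound with leading coefficient $4C$.
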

\begin{proof}
  By replacing $\phi$ with $\phi-\inf_{u\in S}\phi(u)$, we may assume $\phi\ge 0$. By \cite[Proposition 3.3]{Phel}, $\phi$ is continuous on $S$. (A4) is satisfied since $\phi$ is independent of time $t$.

  (A1): By \cite[Theorem 2.8, Proposition 1.7]{barbu2010nonlinear} $A$ is maximal monotone with nonempty values.
 
  (A5): As noted above, $\phi$ is continuous. Hence, there is $\delta >0$ such that $\phi(w) \le \phi(0)+1$ for all $\|w\|_S \le \delta$. For $v \in \partial\phi(u)$ we observe
    \begin{align*}
      \|v\|_{S^*} 
     &= \frac{1}{\delta} \sup_{w \in S,\ \|w\|_S = \delta} \dualdel{S}{v}{w} \\
     &= \frac{1}{\delta} \sup_{w \in S,\ \|w\|_S = \delta} \dualdel{S}{v}{w-u} + \frac{1}{\delta}\dualdel{S}{v}{u} \\
     &\le \frac{1}{\delta} \sup_{w \in S,\ \|w\|_S = \delta} \phi(w)-\phi(u) + \frac{1}{\delta}\dualdel{S}{v}{u} \\
     &\le \frac{1}{\delta} (\phi(0)+1) + \frac{1}{\delta}\dualdel{S}{v}{u}.
    \end{align*}

 (A2): Let $u\in S$, $v\in \partial\phi(u)$ and $\phi^\ast$ be the Legendre-Fenchel transform of $\phi$. By \eqref{eq:phi-quadratic-bound} it follows that
    \[\phi^\ast(v) \ge \frac{\norm{v}_{S^\ast}^2}{4C}-C.\quad \forall v \in S^*,\]
    compare e.g. with \cite[eq. 11(3)]{RockWets}.
  Hence by Young's equality,
  \begin{align*}
    \norm{v}_{S^\ast}^2 
    &\le 4C\left(\phi^\ast(v)+C\right)=4C\left(\dualdel{S}{v}{u}-\phi(u)+C\right)\\
    &\le 4C\left(\norm{v}_{S^\ast}\norm{u}_S+C\right),
  \end{align*}
  which implies (A2).

  (A3): For $u\in S$, $v \in \partial\phi(u)$
    $$ \dualdel{S}{v}{T_n u} = -n\dualdel{S}{v}{J_n u-u} \ge n\left(\phi(u) - \phi(J_n u)\right) \ge 0,$$
    for all $u \in S$ and $n\in\N$.
\end{proof}

\begin{prop}\label{prop:tendstozero}
  Let $\phi: H \to \R$ be a lower semicontinuous function with compact sublevel sets such that $\varphi_{|S}$ is an even, convex function satisfying $\phi(u)=\inf_{v\in H}\phi(v)$ iff $u = 0$. Then $A := \partial\phi$ satisfies Hypothesis \ref{hyp:extinction}.
\end{prop}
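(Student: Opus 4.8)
The plan is to produce the Lyapunov function explicitly and then invoke Remark \ref{rem:suff_L2}\,(2). Since $0$ is by hypothesis the unique minimizer of $\phi$ on $H$, we have $\inf_{v\in H}\phi(v)=\phi(0)$, and I would set
\[ \Theta(x) := \phi(x)-\phi(0),\qquad x\in H. \]
Then $\Theta\ge 0$, and $\Theta$ is measurable (in fact lower semicontinuous, since $\phi$ is). Its sublevel sets $\{\Theta\le R\}=\{\phi\le R+\phi(0)\}$ are sublevel sets of $\phi$, hence compact, in particular bounded. To verify the Lyapunov inequality required in Hypothesis \ref{hyp:extinction}, I would use the subdifferential inequality: for $x\in S$ and $y\in\partial\phi(x)$ one has $\phi(v)\ge\phi(x)+\dualdel{S}{y}{v-x}$ for all $v\in S$; taking $v=0$ gives $\dualdel{S}{y}{x}\ge\phi(x)-\phi(0)=\Theta(x)$, as needed. (Here I only use that $0$ is the global minimizer; the evenness and convexity of $\phi_{|S}$ serve to guarantee that $0$ is indeed a critical point, i.e.\ $0\in\partial\phi(0)=A(0)$.)

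It then remains to establish the pointwise extinction $\lim_{t\to\infty}\|u(t)x\|_H=0$ for every $x\in H$; once this is in hand, Remark \ref{rem:suff_L2}\,(2) applies directly and upgrades it, via nonexpansiveness of the deterministic flow on the compact sublevel sets of $\Theta$, to the uniform statement of Hypothesis \ref{hyp:extinction}. I would first treat $x\in S$. Applying the chain rule of \cite[\S III.4]{Show} to $\|u_t\|_H^2$ along the solution of \eqref{main_determ} and using the Lyapunov inequality gives, with $\eta_t\in A(u_t)$,
\[ \frac{d}{dt}\|u_t\|_H^2 = -2\dualdel{S}{\eta_t}{u_t} \le -2\Theta(u_t)\le 0, \]
so $t\mapsto\|u_t\|_H$ is non-increasing, and integrating over $[0,\infty)$ yields the energy bound $\int_0^\infty\Theta(u_s)\,ds\le\tfrac12\|x\|_H^2<\infty$.

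From the finiteness of this integral I would extract a sequence $s_n\to\infty$ with $\Theta(u_{s_n})\to 0$. Since $\{u_{s_n}\}$ eventually lies in a compact sublevel set of $\Theta$, a subsequence converges in $H$ to some $w$; lower semicontinuity of $\Theta$ forces $\Theta(w)=0$, i.e.\ $\phi(w)=\inf_{v\in H}\phi(v)$, whence $w=0$ by uniqueness of the minimizer. Thus $\|u(s_{n_k})x\|_H\to 0$ along a subsequence, and combined with the monotone decay of $t\mapsto\|u_t\|_H$ this gives the full limit $\|u(t)x\|_H\to 0$. The extension to arbitrary $x\in H$ follows by approximating $x$ by $x^n\in S$ and using the nonexpansive estimate $\|u(t)x-u(t)x^n\|_H\le\|x-x^n\|_H$. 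The main obstacle is precisely this extinction step: one must justify the energy identity for the (a priori only limiting) deterministic solutions, where the chain rule is immediate only for regular $S$-valued data, and then combine the merely subsequential, integral decay of $\Theta(u_t)$ with the monotone decay of the $H$-norm to obtain genuine convergence to the unique equilibrium $0$.
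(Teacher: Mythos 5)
Your proof is correct, and it follows the paper up to the one substantive analytic step. Both you and the paper take $\Theta:=\phi-\phi(0)$, verify the Lyapunov inequality $\dualdel{S}{y}{x}\ge\Theta(x)$ by evaluating the subdifferential inequality at $v=0$, record lower semicontinuity and compactness (hence boundedness) of the sublevel sets, and finish through Remark \ref{rem:suff_L2}\,(2). The divergence is in how the pointwise extinction $\lim_{t\to\infty}\|u(t)x\|_H=0$ is obtained: the paper disposes of it in one line by citing Bruck's theorem \cite[Theorem 5]{Bruck} on asymptotic convergence of contraction semigroups generated by subdifferentials of \emph{even} convex functions --- that citation is exactly where the evenness hypothesis is consumed. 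You instead prove extinction by hand: for $x\in S$ the solution lies in $W^{1,2}(0,T)$, so the chain rule of \cite[\S III.4]{Show} is legitimate and gives $\int_0^\infty\Theta(u_s)\,ds\le\tfrac12\|x\|_H^2$; hence $\Theta(u_{s_n})\to0$ along some $s_n\to\infty$; compactness of the sublevel sets, lower semicontinuity of $\Theta$, and uniqueness of the minimizer force strong subsequential convergence $u_{s_{n_k}}\to0$; the monotone decay of $\|u_t\|_H$ upgrades this to the full limit; and density of $S$ in $H$ with nonexpansiveness covers general $x\in H$. Each route buys something: the paper's is shorter and delegates the convergence to a classical result, while yours is self-contained and never uses evenness at all (compactness of the sublevel sets substitutes for it --- note the paper needs both, evenness for Bruck and compactness for Remark \ref{rem:suff_L2}\,(2), whereas you need only compactness). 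A further point in favour of your argument is that it works directly with the paper's own notion of (limit) solution for $A=\partial\phi:S\to 2^{S^*}$, whereas invoking Bruck's theorem tacitly identifies this flow with the semigroup generated by the $H$-subdifferential of $\phi$, an identification the paper does not spell out.
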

\begin{proof}
  We check the conditions of Remark \ref{rem:suff_L2}, (2). By the subgradient property we have
    $$\dualdel{S}{v}{u} \ge \phi(u)-\phi(0), \quad\forall v\in \partial\phi(u).$$
  Hence, we may choose $\Theta := \phi-\phi(0)$ in Hypothesis \ref{hyp:extinction}. Closedness of the sublevel sets on $H$ implies lower semi-continuity of $\Theta$ on $H$. Then \cite[Theorem 5]{Bruck} implies pointwise convergence to $0$.
\end{proof}

In the following let $(\Omega,\Fscr,\{\Fscr_t\}_{t \ge 0},\P)$ be a (not necessarily complete nor right-continuous) filtered probability space and $N:[0,T]\times\Omega\to S$ be an $\{\Fscr_t\}$-adapted process satisfying $N_\cdot(\omega)\in L^2([0,T];D(T^\frac{3}{2}))$ for all $\omega \in \Omega$ and strict stationarity, i.e.\ (N). Furthermore, let $(\Omega,\hat\Fscr,\{\hat\Fscr_t\}_{t \ge 0},\P)$ be a normal filtered probability space and $B: [0,T] \times \Omega \times S \to L_2(U,H)$ be an $\hat\Fscr_t$-progressively measurable map. The choice of the underlying Gelfand triple $S \subseteq H \subseteq S^*$ will be specified in each example.

\subsection{Stochastic generalized fast-diffusion equation}\label{sec:FDE}\

We adopt a framework inspired by R\"ockner and Wang \cite{RW}. Let $(E,\Bscr,\mu)$ be a finite measure space and
$(\Escr,D(\Escr))$ be a symmetric Dirichlet form on $L^2(\mu)$ with associated Dirichlet operator $(L,D(L))$ (cf.\ \cite{FOT}).
Assume that $L$ is strictly coercive, self-adjoint, positive-definite and possesses a compact resolvent. Then $D(\Escr)$ is a Hilbert space with norm $\norm{\cdot}_0:=\Escr^{1/2}(\cdot)$ and $D(\Escr)\subset L^2(\mu)$ is dense and compact.

We define the generalized fast diffusion operator in the Gelfand triple 
  \[S := L^2(\mu) \subseteq H:= D(\Escr)^\ast\subset S^\ast.\]
Let $\Psi: \R \to \R_+$ be an even, convex, continuous function with $\Psi(0) = 0$, subdifferential $\Phi = \partial\Psi: \R \to 2^\R$ and 
\begin{equation}\label{eqn:psi_growth}
     \Psi(r) \le C \left( |r|^2 + \mathbbm{1}_{\{\mu(E) < \infty\}} \right),\quad\forall r\in\R
\end{equation}
for some constant $C > 0$. Explicit examples are given by:
  \begin{enumerate}
   \item Fast diffusion equation: 
   \[\Phi_p(r) := \partial\left( s\mapsto\tfrac{1}{p}\abs{s}^{p}\right)(r) = |r|^{p-1}\sgn(r),\quad p\in [1,2].\] Note that we include the limit case $p=1$ for which 
      $$\Phi_1(r) = \begin{cases} \sgn(r), & r \ne 0 \\ [-1,1], & r =0. \end{cases}$$ 
   \item Plasma diffusion:
   \[\Phi_{\log}(r) := \partial\Big(s\mapsto(\abs{s}+1)\log(\abs{s}+1)-\abs{s}\Big)(r) = \log(|r|+1)\sgn(r).\]
  \end{enumerate}
  
We define
   $$\phi(u) := \int_E \Psi(u(\xi))\, d\mu(\xi)\in [0,+\infty],\quad u \in L^1(\mu)$$
and consider its restriction to $S$. Let $A := \partial\phi: S \to 2^{S^*}$. Taking \eqref{eqn:psi_growth} and $\Psi \ge 0$ into account,
we can apply Pratt's lemma \cite[Theorem 1]{pratt60} in order to see that
$\phi$ is continuous on $S$.

 As in \cite[Proposition 2.9]{Barb} we obtain
\begin{equation}\label{eqn:FDE_operator}
  A(u) = \left\{v \in S^* = L^2(\mu)\ \big|\ v(\xi) \in \Phi(u(\xi)),\ \text{a.e. } \xi \in E\right\}.  
\end{equation}

\begin{ex}
  Consider the stochastic generalized fast-diffusion equation
  \begin{equation}\label{eqn:FDE}\begin{split}
     dX_t&\in L\Phi(X_t)\,dt + \begin{cases} dN_t \\ B_t(X_t)\,dW_t \end{cases}, \quad t\in (0,T], \\
     X_0&=x.
  \end{split}\end{equation}  
  Then Theorem \ref{thm:unique_ex_stoch_add_pathw}, Corollary \ref{cor:gen_RDS}, Theorem \ref{thm:unique_ex_stoch_mult} and Theorem \ref{thm:unique_ex_stoch_mult_all_ic} apply, proving the unique existence of a (limit) solution to \eqref{eqn:FDE}.
\end{ex}
\begin{proof}
  By continuity, convexity and the growth condition for $\Psi$, $\phi:S \to \R_+$ is a convex continuous function. By Proposition \ref{prop:subgradient} it only remains to prove that $\phi$ is non-expansive with respect to $J_n$. 

  Recall that $-L$ equals the Riesz map of $S$ as we dualize over $H=D(\Escr)^\ast$. Since we have assumed $L$ to be a symmetric Dirichlet operator, it holds that 
      \[\norm{J_n u}_{L^\infty(\mu)}\le\norm{u}_{L^\infty(\mu)}, \quad\forall u\in L^\infty(\mu),\;\;n\in\N\]
  and
    \[J_n u \le J_n v, \quad\forall u,v\in L^2(\mu),\;\; 0 \le u \le v\;\;\mu\text{-a.e.},\;\;n\in\N.\]
  By an interpolation theorem due to Maligranda (cf.\ \cite[Theorem 3]{Mali}), we obtain
    $$ \phi(J_n u) \le \phi(u), \quad \forall u \in \tilde L^\Psi(\mu) \supset L^2(\mu)=S, $$
where $$\tilde L^\Psi (\mu):= \left\{u: E \to \R\;\Big\vert\; u\;\text{measurable},\; \phi(u) := \int_E \Psi(u)\ d\mu < \infty \right\}.$$
\end{proof}

In order to prove ergodicity of the corresponding Markovian semigroup we require one of the following stronger coercivity conditions
\begin{enumerate}
 \item[(C1)] For each $v \in L^2(\mu)$ with $v(\xi) \in \Phi(u(\xi))$ for a.e.\ $\xi \in E$ we have
  \begin{equation*}\label{eqn:FDE_coerc}
    \int_E vu \ d\mu \ge c\|u\|_H^p,  
  \end{equation*}
  for some $p \in [1,2)$ and some constant $c > 0$. 

  For example, this is satisfied in the case of stochastic fast diffusion equations on bounded domains, i.e.\ for $E = \Lambda \subseteq \R^d$ being a bounded, smooth domain, $\mu =dx$ being the Lebesgue measure, $L=\Delta$ being the Dirichlet Laplacian and $\Psi(r)=\tfrac{1}{p}\abs{r}^{p}$ with $\ p\in \left(1 \vee \frac{2d}{d+2},2 \right) \cup [d,2)$ (cf. also \cite{Liu4,LiuToe1}).
  \item[(C2)] The embedding $L^1(\mu)\subset H=D(\Escr)^\ast$ is compact, $\Psi(r) = 0$ iff $r = 0$ and 
      $$\lim_{r \to +\infty} \frac{\Psi(r)}{r} = +\infty.$$
    
    In particular, this is satisfied by the plasma diffusion in one space dimension. 
\end{enumerate}

\begin{ex}\label{ex:erg_FDE}
  Consider the generalized stochastic fast-diffusion equation,
    \begin{equation}\label{eqn:FDE_2}\begin{split}
     dX_t&\in L\Phi(X_t)\,dt + B\,dW_t,\quad t\in (0,T], \\
     X_0&=x
  \end{split}\end{equation}  
  and assume that \textup{(C1)} or \textup{(C2)} is satisfied. Then the associated Markovian semigroup $\{P_t\}$ is weak-$*$ mean ergodic. 
\end{ex}
\begin{proof}
  First assume (C1). We prove that Hypothesis \ref{hyp:extinction} is satisfied by Remark \ref{rem:suff_L2}, (1). By \eqref{eqn:FDE_operator} and (C1) we obtain
    $$ 2\dualdel{S}{y}{x} = \int_E y(\xi)x(\xi)\ d\mu(\xi) \ge c\|x\|_H^p. $$  
  Hence, Theorem \ref{thm:ergodicity} applies.

  Let us now assume (C2). We aim to prove that Hypothesis \ref{hyp:extinction} is satisfied by application of Proposition \ref{prop:tendstozero}.

  By Fatou's lemma $\phi$ is lower semi-continuous on $L^1(\mu)$. Compactness of the embedding $L^1(\mu)\subset H$ implies that $\phi$ has relatively compact sublevel sets in $H$. It remains to show closedness of the sublevel sets $\{\phi\le\alpha\}$ in $H$. Let $x_n\in\{\phi\le\alpha\}$, $n\in\N$, such that $\norm{x_n-x}_H \to 0$ for some $x\in H$. By de la Vall\'ee-Poussin's theorem, $\{x_n\}$ is uniformly integrable in $L^1(\mu)$. By the Dunford--Pettis theorem, there is $y\in L^1(\mu)$ and a subsequence $\{x_{n'}\}$ of $\{x_n\}$ such that $x_{n'}\rightharpoonup y$ weakly in $L^1(\mu)$. Since $\{\phi\le\alpha\}$ is convex and closed in $L^1(\mu)$, it is also weakly closed in $L^1(\mu)$ by Mazur's lemma. Hence $y\in\{\phi\le\alpha\}$. By weak continuity of the embedding $L^1(\mu)\subset H$, we conclude $y=x$.

  The claim follows now from Proposition \ref{prop:tendstozero}.
\end{proof}

As pointed out above, explicit examples include the fast diffusion equation for $\ p\in \left(1 \vee \frac{2d}{d+2},2 \right) \cup [d,2)$ and the generalized stochastic plasma-diffusion equation for $d = 1$. Note that this includes the limit case of the fast diffusion equation $(\Psi_1(r) = |r|)$ in one space dimension.

\subsubsection{Ergodicity for the stochastic plasma-diffusion in two space dimensions}\ 

In the situation of the stochastic plasma-diffusion in two space dimensions the embedding $L^1(\Lambda) \subseteq H = (H_0^1(\Lambda))^*$ does not hold and thus condition (C2) cannot be used to prove ergodicity anymore. In particular, we cannot expect compactness of the level sets of 
  $$\varphi(u) = \int_\Lambda \Psi(u(\xi))d\xi = \int_\Lambda (|u(\xi)|+1)\log(|u(\xi)|+1)-|u(\xi)| d\xi.$$
However, in this section we provide a Lyapunov function $\Theta$ with bounded sublevel sets $K_R:=\{\Theta\le R\}$ in $H$ such that the solutions to the deterministic equation converge to zero uniformly on $K_R$ (i.e.\ Hypothesis \ref{hyp:extinction} holds).

Let $L=\Delta$ be the Dirichlet Laplace operator on a bounded Lipschitz domain $\Lambda\subset\R^2$. Let $S=L^2(\Lambda)$, $H=H^{-1}(\Lambda)$. Consider the stochastic plasma-diffusion with additive noise,
  \begin{equation}\label{eqn:plasma-diff-2d}\begin{split}
     dX_t&= \Delta\left[\log(\abs{X_t}+1)\sgn(X_t)\right]\,dt + B\,dW_t,\quad t\in (0,T], \\
     X_0&=x.
  \end{split}\end{equation}  

Let $\Phi_{\log}(r):=\log(\abs{r}+1)\sgn(r)$. Observe that $\Phi_{\log}\in C^{1,1}(\R)$, i.e.
$\Phi_{\log}$ is continuously differentiable and Lipschitz. Indeed,
$\Phi'_{\log}(r)=\frac{1}{\abs{r}+1}\in (0,1]$ for $r\in\R$.

Consider the deterministic equation
  \begin{equation}\label{eqn:plasma-diff-2d-det}\begin{split}
     du_t&= \Delta\left[\log(\abs{u_t}+1)\sgn(u_t)\right]\,dt,\quad t\in (0,T], \\
     u_0&=x\in H^{-1}(\Lambda).
  \end{split}\end{equation}  

\begin{thm}\label{thm:loglyapunov}
Let $u$ be a weak solution to \eqref{eqn:plasma-diff-2d-det}. 
Let
\[\theta(r):=\Phi_{\log}(r)r=\abs{r}\log(\abs{r}+1).\]
Then
\[\Theta(v):=\int_\Lambda\theta(v)\,d\xi\in [0,+\infty],\quad v\in H^{-1}(\Lambda)\cap L^1(\Lambda),\]
is a Lyapunov function such that $u$ and $\Theta$ satisfy Hypothesis \ref{hyp:extinction}.

Moreover, the Markovian semigroup $\{P_t\}$ associated to \eqref{eqn:plasma-diff-2d} is weak-$*$ mean ergodic.
\end{thm}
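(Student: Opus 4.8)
The plan is to verify that $A=\partial\phi$, where $\phi(u)=\int_\Lambda\Psi(u)\,d\xi$ with $\Psi(r)=(\abs{r}+1)\log(\abs{r}+1)-\abs{r}$, together with the $S$-trace class additive noise, satisfies Hypotheses \ref{hyp:A}, \ref{hyp:A_2}, \ref{hyp:noise_2} and \ref{hyp:extinction}, and then to apply Theorem \ref{thm:ergodicity}. Hypotheses \ref{hyp:A} and \ref{hyp:A_2} follow from Proposition \ref{prop:subgradient} exactly as in Example \ref{ex:erg_FDE}: $\Psi$ is even, convex and continuous with $\Psi(r)\le C(1+\abs{r}^2)$ (since $\Psi(r)\sim\abs{r}\log\abs{r}$ is subquadratic), and the $J_n$-nonexpansivity used there yields (A3); Hypothesis \ref{hyp:noise_2} is part of the standing assumption that $W^B$ is trace class in $S$. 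Since $\Phi_{\log}$ is single-valued, so is $A$, and a direct computation of the $S^\ast$--$S$ pairing (equivalently, the chain rule of Lemma \ref{boundlem} applied to $\tfrac12\norm{u_t}_H^2$ together with the identity $\tfrac{d}{dt}\tfrac12\norm{u_t}_H^2=-\int_\Lambda\Phi_{\log}(u_t)u_t\,d\xi$) gives
\[\dualdel{S}{y}{x}=\int_\Lambda\Phi_{\log}(x)\,x\,d\xi=\int_\Lambda\theta(x)\,d\xi=\Theta(x),\qquad y\in A(x).\]
In particular the Lyapunov inequality $\dualdel{S}{y}{x}\ge\Theta(x)$ of Hypothesis \ref{hyp:extinction} holds with equality, so only the two geometric properties of $\Theta$ remain: $H$-boundedness of the sublevel sets $K_R=\{\Theta\le R\}$ and uniform decay of the deterministic flow on them.

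For the sublevel sets I would regard $\theta(r)=\abs{r}\log(\abs{r}+1)$ as an even convex Young function, generating the Orlicz space $L\log L(\Lambda)$ with Luxemburg norm $\norm{\cdot}_{L_\theta}$. Convexity and $\theta(0)=0$ give $\theta(\lambda s)\le\lambda\theta(s)$ for $0\le\lambda\le1$, whence $\Theta(v)\le R$ forces $\norm{v}_{L_\theta}\le\max(1,R)$. The decisive and dimension-sensitive point is the continuous embedding $L\log L(\Lambda)\hookrightarrow H^{-1}(\Lambda)=H$, which I would deduce by Orlicz duality from the Moser--Trudinger inequality: in two dimensions $H_0^1(\Lambda)\hookrightarrow\exp L^2(\Lambda)\hookrightarrow\exp L(\Lambda)$, and $\exp L$ is, up to equivalence of norms, the Orlicz space built on the Young function complementary to $\theta$ (as $\theta^\ast(s)\sim e^{s}$). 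Hölder's inequality in Orlicz duality then yields $\norm{v}_{H^{-1}}\le C_0\norm{v}_{L_\theta}$, so that $\sup_{v\in K_R}\norm{v}_H\le C_0\max(1,R)=:M_R<\infty$.

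For the uniform decay I would use only that $t\mapsto\norm{u(t)x}_H$ is non-increasing (a contraction property, $0$ being an equilibrium since $\Phi_{\log}(0)=0$) together with the energy identity for \eqref{eqn:plasma-diff-2d-det},
\[\tfrac12\norm{u(t)x}_H^2+\int_0^t\Theta(u(s)x)\,ds=\tfrac12\norm{x}_H^2,\]
valid for $x\in S$ by the chain rule and extended to $x\in K_R$ by approximation and lower semicontinuity of $\Theta$; in particular $\int_0^\infty\Theta(u(s)x)\,ds\le\tfrac12 M_R^2$ for $x\in K_R$. The second ingredient is the scaling-sharp interpolation inequality
\[\norm{v}_H\le C_0\sqrt{\Theta(v)}\qquad\text{whenever }\Theta(v)\le1,\]
which follows by combining $\norm{v}_H\le C_0\norm{v}_{L_\theta}$ with $\norm{v}_{L_\theta}^2\le\Theta(v)$ for $\norm{v}_{L_\theta}\le1$; the latter rests on $\theta(\lambda s)\ge\lambda^2\theta(s)$ for $0\le\lambda\le1$, i.e.\ on $\log(\lambda s+1)\ge\lambda\log(s+1)$, which holds by concavity of $\lambda\mapsto\log(\lambda s+1)$. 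Given these, for $T\ge M_R^2$ and each $x\in K_R$ the mean value theorem furnishes $s^\ast=s^\ast(x)\in[T/2,T]$ with $\Theta(u(s^\ast)x)\le\tfrac{2}{T}\int_{T/2}^T\Theta(u(s)x)\,ds\le M_R^2/T\le1$; the interpolation inequality and the monotonicity of $\norm{u(\cdot)x}_H$ then give $\norm{u(T)x}_H\le\norm{u(s^\ast)x}_H\le C_0M_R/\sqrt T$, uniformly in $x\in K_R$, so that
\[\sup_{x\in K_R}\norm{u(T)x}_H\le\frac{C_0M_R}{\sqrt T}\xrightarrow[T\to\infty]{}0,\]
which is precisely Hypothesis \ref{hyp:extinction}. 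Theorem \ref{thm:ergodicity} then yields the unique invariant measure and weak-$\ast$ mean ergodicity for \eqref{eqn:plasma-diff-2d}.

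The principal difficulty is analytic and concentrated in two places. First, the embedding $L\log L\hookrightarrow H^{-1}$ is borderline precisely in $d=2$: for $d=1$ it is superseded by the compact $L^1\hookrightarrow H^{-1}$ (the route of condition (C2) in Example \ref{ex:erg_FDE}), while for $d\ge3$ it fails and the logarithmic entropy $\Theta$ is too weak — this is the structural reason the ergodicity result is confined to one and two dimensions. Carrying out the Orlicz-duality argument therefore requires identifying the complementary Young function $\theta^\ast$ and invoking Moser--Trudinger rather than a plain Sobolev embedding. Second, the energy identity and the a priori finiteness of $\int_0^t\Theta(u(s)x)\,ds$ along the flow are, for the $H^{-1}$-limit solution, only formal and must be justified by passing to the limit in the viscosity approximation of Theorem \ref{thm:unique_ex_det_smooth}, using the uniform bounds of Lemma \ref{boundlem} and the lower semicontinuity of $v\mapsto\Theta(v)$ and of $\norm{\cdot}_H$. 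Once these two points are secured, the remaining steps are the elementary convexity estimates for $\theta$ recorded above.
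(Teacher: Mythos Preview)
Your proof is correct and follows the same overall architecture as the paper: both arguments hinge on the critical embedding $L\log L(\Lambda)\hookrightarrow H^{-1}(\Lambda)$ in dimension two, obtained by Orlicz duality from the Moser--Trudinger inequality, together with the $\Delta_2$-type scaling $\theta(\lambda s)\ge\lambda^2\theta(s)$ to pass between $\Theta(v)$ and $\|v\|_{(\theta)}$.

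There is, however, one genuine tactical difference worth recording. The paper first proves, by a separate integration-by-parts computation using $\theta''\ge 0$ and $\Phi_{\log}'\ge 0$, that $t\mapsto\Theta(u_t)$ is itself non-increasing along the flow; combined with the energy identity this gives the direct pointwise bound $\Theta(u_t)\le\|u_0\|_H^2/(2t)$, from which the $\|u_t\|_H\le C\|u_0\|_H/\sqrt{t}$ decay is read off (uniformly on $H$-bounded sets, a fortiori on sublevel sets). You bypass the monotonicity of $\Theta$ entirely: the mean-value theorem on the time integral $\int_0^\infty\Theta(u_s)\,ds\le\tfrac12 M_R^2$ produces a single time $s^\ast\in[T/2,T]$ where $\Theta$ is small, and the monotonicity of $\|u(\cdot)\|_H$ (rather than of $\Theta$) transports the resulting bound to time $T$. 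Your route is marginally more economical in that it avoids the second-derivative computation for $\theta$, at the price of a slightly less explicit constant and a decay that is stated only on sublevel sets rather than on all $H$-balls; since Hypothesis~\ref{hyp:extinction} asks precisely for the former, nothing is lost.
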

We extend $\Theta$ by $+\infty$ to all of $H^{-1}(\Lambda)$. Lower semicontinuity of $\Theta: H \to [0,\infty]$ then follows from coercivity of $\theta$, the Dunford-Pettis theorem and weak lower semicontinuity of $\Theta: L^1(\Lambda) \to [0,\infty]$. In turn, weak lower semicontinuity of $\Theta: L^1(\Lambda) \to [0,\infty]$ follows from convexity and Fatou's Lemma.

For the proof of Theorem \ref{thm:loglyapunov},
we shall need the following form of the Trudinger-Moser inequality:
\begin{lem}[\cite{Moser70}]\label{lem:moser}
Let $\Lambda\subset\R^2$ be a bounded, open domain and let $u\in H^{1}_0(\Lambda)$. Then there exists a constant $c=c(2)>0$
such that
\[\int_\Lambda \exp(\abs{u})\,d\xi\le c\abs{\Lambda}\exp\left(\frac{1}{16\pi}\norm{\nabla u}_{L^2(\Lambda)}^2\right).\]
\end{lem}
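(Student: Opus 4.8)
The plan is to deduce the stated inequality from Moser's sharp exponential estimate in its standard, quadratic form, and to treat the establishment of that sharp estimate as the real content. Concretely, the key input is \textbf{Moser's theorem} \cite{Moser70}: there is a universal constant $C>0$ such that
\[ \int_\Lambda \exp\!\left(4\pi v^2\right)\,d\xi \le C\,\abs{\Lambda},\qquad \text{for every } v\in H^1_0(\Lambda) \text{ with } \norm{\nabla v}_{L^2(\Lambda)}\le 1. \]
Granting this, the passage to the form stated in the lemma is an elementary scaling-plus-Young argument, so I would record Moser's theorem as the known ingredient and carry out only this reduction in full.

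For the reduction, fix $u\in H^1_0(\Lambda)$ and set $\sigma:=\norm{\nabla u}_{L^2(\Lambda)}$. If $\sigma=0$ then $u=0$ and the claim is trivial, so assume $\sigma>0$ and put $v:=u/\sigma$, so that $\norm{\nabla v}_{L^2(\Lambda)}=1$. Young's inequality in the form $ab\le 4\pi a^2+\tfrac{b^2}{16\pi}$ (equivalently $(\sqrt{4\pi}\,a-\tfrac{b}{2\sqrt{4\pi}})^2\ge 0$), applied pointwise with $a=\abs{v(\xi)}$ and $b=\sigma$, gives $\abs{u(\xi)}=\sigma\abs{v(\xi)}\le 4\pi v(\xi)^2+\tfrac{\sigma^2}{16\pi}$. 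Exponentiating, integrating, and invoking Moser's theorem yields
\[ \int_\Lambda \exp(\abs{u})\,d\xi \le \exp\!\Big(\tfrac{\sigma^2}{16\pi}\Big)\int_\Lambda \exp\!\left(4\pi v^2\right)\,d\xi \le C\,\abs{\Lambda}\,\exp\!\Big(\tfrac{1}{16\pi}\norm{\nabla u}_{L^2(\Lambda)}^2\Big), \]
which is the assertion. Note that the precise constant $\tfrac{1}{16\pi}=\tfrac{1}{4\cdot 4\pi}$ is forced: it is exactly the value produced by optimizing Young's inequality against the sharp Moser constant $4\pi$.

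The genuine difficulty therefore lies entirely in Moser's theorem, and the hard part is the \emph{sharp} constant $4\pi$. A non-sharp exponential bound is cheap: one expands $\exp(\alpha v^2)$ in a power series and sums the two-dimensional Sobolev bounds $\norm{v}_{L^{2k}(\Lambda)}\le C\sqrt{2k}\,\norm{\nabla v}_{L^2(\Lambda)}$, whose $\sqrt{k}$-growth makes the series converge for every $\alpha$ below some threshold—but that threshold falls short of $4\pi$ and would yield a strictly worse prefactor. Obtaining $4\pi$ requires the rearrangement route: replace $\Lambda$ by a disc of equal measure and $v$ by its symmetric decreasing rearrangement $v^\ast$, using equimeasurability to preserve the exponential integral and the P\'olya--Szeg\H{o} inequality to ensure $\norm{\nabla v^\ast}_{L^2}\le\norm{\nabla v}_{L^2}$; exploit the conformal (scale) invariance of the Dirichlet energy in dimension two to reduce to the unit disc, which produces the factor $\abs{\Lambda}$; and finally apply the logarithmic substitution $r=e^{-t/2}$, under which the Dirichlet energy of a radial profile becomes the one-dimensional integral $\int_0^\infty \dot\phi(t)^2\,dt$ and the claim reduces to the one-dimensional lemma that $\int_0^\infty \exp\!\big(\phi(t)^2-t\big)\,dt\le C$ for every absolutely continuous $\phi$ with $\phi(0)=0$ and $\int_0^\infty \dot\phi^2\,dt\le 1$. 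Since this chain of steps is precisely the content of the cited reference, I would invoke it rather than reproduce it, and present only the scaling deduction above as the proof proper.
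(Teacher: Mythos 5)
Your reduction is correct: the Young's inequality $ab\le 4\pi a^2+\tfrac{b^2}{16\pi}$ applied to $a=\abs{v}$, $b=\sigma=\norm{\nabla u}_{L^2}$ with $v=u/\sigma$, combined with the sharp Moser bound $\int_\Lambda \exp(4\pi v^2)\,d\xi\le C\abs{\Lambda}$ for $\norm{\nabla v}_{L^2}\le 1$, gives exactly the stated inequality, and you are right that the constant $\tfrac{1}{16\pi}$ is precisely what the sharp exponent $4\pi$ forces. For comparison: the paper offers no argument at all here --- its ``proof'' is a bare citation to \cite[Section 2, Corollary 1, p.~414]{NaSeYo97}, where this corollary is stated and derived from the Trudinger--Moser inequality by the very same scaling-plus-Young computation you carry out. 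So your proposal is not a different route; it is the standard derivation that the paper delegates to the literature, made explicit. The only caveat is bookkeeping: you should cite the quadratic-form Moser theorem (or \cite{NaSeYo97}) as the external input, since the rearrangement/P\'olya--Szeg\H{o}/one-dimensional-lemma chain you sketch at the end is only an outline and is not needed once that theorem is invoked.
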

\begin{proof}
See e.g.\ \cite[Section 2, Corollary 1, p. 414]{NaSeYo97}.
\end{proof}

\begin{lem}\label{lem:Moserbound}
Let $u\in H^{1}_0(\Lambda)$ with $\norm{\nabla u}_{L^2(\Lambda;\R^d)}\le 1$. Let
$\theta(r):=\abs{r}\log(\abs{r}+1)$ and $\theta^\ast$ the associated convex conjugate function.
Then there exists a constant $C>0$, depending only on $\abs{\Lambda}$, such that
\[\int_\Lambda\theta^\ast(u)\,d\xi\le C.\]
\end{lem}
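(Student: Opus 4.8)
The plan is to dominate $\theta^\ast$ pointwise by an exponential Young function and then invoke the Trudinger--Moser inequality of Lemma \ref{lem:moser}. The crucial observation is that the critical growth $\theta(r)\sim\abs{r}\log\abs{r}$ at infinity has a Legendre--Fenchel conjugate growing exactly like $e^{\abs{s}}$, i.e.\ with coefficient one on $\abs{s}$, which matches precisely the exponent appearing in Lemma \ref{lem:moser}. This exact matching is what lets the subcritical constraint $\norm{\nabla u}_{L^2(\Lambda)}\le 1$ suffice, with no rescaling of $u$.

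First I would compare $\theta$ with the convex function $\Psi(r):=(\abs{r}+1)\log(\abs{r}+1)-\abs{r}$ already appearing in the plasma-diffusion example. A direct computation gives
\[\theta(r)-\Psi(r)=\abs{r}-\log(\abs{r}+1)\ge 0,\quad\forall r\in\R,\]
since $t\mapsto t-\log(t+1)$ vanishes at $t=0$ and has nonnegative derivative $\frac{t}{t+1}$ for $t\ge 0$. Hence $\theta\ge\Psi$ pointwise, and by the order-reversing property of the Legendre--Fenchel transform we obtain $\theta^\ast\le\Psi^\ast$.

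Next I would compute $\Psi^\ast$ explicitly. Since $\Psi$ is even, convex, and $\Psi'(r)=\log(\abs{r}+1)\sgn(r)$, solving $s=\Psi'(r)$ for $s\ge 0$ gives $r=e^{s}-1$, whence $\Psi^\ast(s)=e^{\abs{s}}-\abs{s}-1\le e^{\abs{s}}$. Combining the two steps yields the pointwise bound $\theta^\ast(u)\le e^{\abs{u}}$. Integrating over $\Lambda$ and applying Lemma \ref{lem:moser} under the hypothesis $\norm{\nabla u}_{L^2(\Lambda)}\le 1$ then gives
\[\int_\Lambda\theta^\ast(u)\,d\xi\le\int_\Lambda e^{\abs{u}}\,d\xi\le c\abs{\Lambda}\exp\left(\tfrac{1}{16\pi}\right)=:C,\]
a constant depending only on $\abs{\Lambda}$ (through the universal Moser constant $c=c(2)$), which is exactly the assertion.

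The only delicate point is the matching of the exponent: the conjugate $\theta^\ast$ must not grow faster than $e^{\abs{s}}$, since any strictly larger coefficient in the exponent would force $\norm{\nabla u}_{L^2}$ strictly below the Moser threshold and break the uniform bound. This is guaranteed precisely because $\theta(r)=\abs{r}\log(\abs{r}+1)\ge\Psi(r)$ as a genuine pointwise inequality, rather than $\theta$ being merely asymptotic to $\abs{r}\log\abs{r}$; the clean domination $\theta\ge\Psi$ transfers directly to $\theta^\ast\le\Psi^\ast=e^{\abs{\cdot}}-\abs{\cdot}-1$ with the sharp constant, so Lemma \ref{lem:moser} applies verbatim.
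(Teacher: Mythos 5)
Your proof is correct and takes essentially the same route as the paper's: both arguments minorize $\theta$ pointwise by a convex function whose Legendre--Fenchel conjugate is explicitly dominated by $e^{|s|}$ (the paper uses $r\log r - r$, whose conjugate $e^{s}$ it cites; you use the plasma potential $(|r|+1)\log(|r|+1)-|r|$ and compute its conjugate $e^{|s|}-|s|-1$ by hand), and then conclude via the order-reversing property of conjugation and Lemma \ref{lem:moser}. The differing choice of minorant is immaterial.
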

\begin{proof}
Let $r > 0$. Then
\[\theta(r)=r\log(r+1)\ge r\log(r+1)-r\ge r\log(r)-r.\]
Taking the convex conjugate (see e.g.\ \cite[Section 3.3, p. 49]{borwein06}) yields
\[\theta^\ast(r)\le \exp(r),\quad\forall r\ge 0,\]
and hence by symmetry
\[\theta^\ast(r)\le \exp(\abs{r}),\quad\forall r\in\R.\]
The claim follows now by Lemma \ref{lem:moser}.
\end{proof}

\begin{proof}[Proof of Theorem \ref{thm:loglyapunov}]
  We observe that $\theta$ is twice continuously differentiable with
    \[\theta'(r)=\frac{r}{\abs{r}+1}+\Phi_{\log}(r), \quad \theta''(r)=\frac{\abs{r}+2}{(\abs{r}+1)^2}.\]
  Since $\theta''(r)\in (0,2]$ for $r\in\R$, we see that $\theta$ is convex and $\theta'$ is Lipschitz. Moreover, $\theta$ is an $N$-function, that is, $\theta$ is even, convex, continuous, non-decreasing, $\theta(r)=0$ iff $r=0$ and
    \[\lim_{r\to 0}\frac{\theta(r)}{r}=0,\quad\lim_{r\to +\infty}\frac{\theta(r)}{r}=+\infty.\]
  $\theta$ satisfies the following form of the so-called $\Delta_2$-condition
  \begin{equation}
      \label{Delta2}\theta(2r)\le 4\theta(r),\quad\forall r\in\R.
  \end{equation}
  Indeed,
  \[\theta(2r)=\abs{2r}\log(\abs{2r}+1)\le 2\abs{r}\log\left((\abs{r}+1)^2\right)=4\abs{r}\log(\abs{r}+1)= 4\theta(r).\]
  For initial data $u_0\in L^2(\Lambda)$, we get by the chain-rule ($0\le s\le t\le T$)
    \[\int_\Lambda\theta(u_t)\,d\xi-\int_\Lambda\theta(u_s)\,d\xi=\int_s^t\int_\Lambda\theta'(u_r)\Delta\Phi_{\log}(u_r)\,d\xi\,dr.\]
  Integrating by parts and the chain-rule yield
    \[\int_\Lambda\theta(u_t)\,d\xi-\int_\Lambda\theta(u_s)\,d\xi=-\int_s^t\int_\Lambda\theta''(u_r)\Phi_{\log}'(u_r)\abs{\nabla u_r}^2\,d\xi\,dr\le 0\]
  and $t\mapsto\int_\Lambda\theta(u_t)\,d\xi$ is non-increasing. These informal calculations can be made rigorous by standard approximation techniques. Taking into account that
  \[\norm{u_t}_{-1}^2=\norm{u_0}_{-1}^2-2\int_0^t\int_\Lambda\theta(u_r)\,d\xi\,dr,\]
  we get for $u_0\in L^2(\Lambda)$ that
  \[\int_\Lambda\theta(u_t)\,d\xi\le\frac{\norm{u_0}_{-1}^2}{2t}.\]

  Let $\eps>0$, and let $k\in\Z$ with $2^{-k}\le\eps\le 2^{-k+1}$. Then $\theta(\frac{s}{\eps})\le 4^k\theta(s)$ by \eqref{Delta2}.
  Then
  \[\int_\Lambda\theta\left(\frac{u_t}{\eps}\right)\,d\xi\le \frac{4^k \norm{u_0}_{-1}^2}{2t}\le \frac{1}{t}2^{2k-1} \norm{u_0}_{-1}^2\le\frac{2}{t}\eps^{-2}\norm{u_0}_{-1}^2.\]
  Therefore, for $\eps:=(2\norm{u_0}_{-1}^2)^{1/2}t^{-1/2}$,
  \[\int_\Lambda\theta\left(\frac{u_t}{\eps}\right)\,d\xi\le 1\]
  and hence
  \[\norm{u_t}_{(\theta)}\le\eps=\sqrt{\frac{2}{t}}\norm{u_0}_{-1},\]
  where
  \[\norm{v}_{(\theta)}:=\inf\left\{k>0\;\bigg\vert\;\int_\Lambda\theta\left(\frac{v}{k}\right)\,d\xi\le 1\right\}\]
  denotes the so-called \emph{gauge-norm}.

  Now, by \cite[Theorem 1.2.6, (i), p.\ 15]{RR2} and \cite[Theorem 1.2.8, (ii), p.\ 17]{RR2} and Lemma \ref{lem:Moserbound},

  \begin{align}\label{eqn:decay}
    \norm{u_t}_{-1}&=\sup\left\{\int_\Lambda u_t v\,d\xi\;\bigg\vert\; v\in H^{1}_0(\Lambda),\;\norm{\nabla v}_{L^2(\Lambda;\R^d)}\le 1\right\}\nonumber\\
    &\le 2\norm{u_t}_{(\theta)}\sup\left\{\left(1\vee\int_\Lambda\theta^\ast(v)\,d\xi\right)\;\bigg\vert\; v\in H^{1}_0(\Lambda),\;\norm{\nabla v}_{L^2(\Lambda;\R^d)}\le 1\right\}\\
    &\le 2\sqrt{2}(1\vee C)\frac{1}{\sqrt{t}}\norm{u_0}_{-1}.\nonumber
  \end{align}

  By density of the embedding $L^2(\Lambda)\subset H^{-1}(\Lambda)$ and continuity in the initial data we get \eqref{eqn:decay} for all $u_0\in H^{-1}(\Lambda)$. In particular, $u_t \to 0$ for $t \to \infty$ uniformly on bounded sets in $H$.

  To verify Hypothesis \ref{hyp:extinction} it remains to prove boundedness of the sublevel sets $K_R:=\{\Theta\le R\}$. By \cite[Theorem 1.2.8, (i)]{RR2}, $\norm{u}_{(\theta)}\le 1$ iff $\Theta(u)\le 1$ and by the same calculation as for \eqref{eqn:decay}
    $$\norm{u}_{-1} \le C\norm{u}_{(\theta)}, \quad\forall u \in H^{-1}(\Lambda).$$
  If $\Theta(u) \le 1$ then $\norm{u}_{(\theta)}\le 1$. If $\Theta(u) > 1$ then $\norm{u}_{(\theta)} > 1$ and by \cite[Theorem 1.2.8, (i)]{RR2} $\Theta(u) \ge \norm{u}_{(\theta)}$. Hence,
    $$\norm{u}_{-1} \le C (1+\Theta(u)), \quad\forall u \in H^{-1}(\Lambda).$$
  and $\Theta$ has bounded sublevel sets.

  The proof is completed by Theorem \ref{thm:ergodicity}.
\end{proof}

\subsection{The stochastic singular \texorpdfstring{$\Phi$}{Phi}-Laplacian equation}

\subsubsection{Dirichlet boundary conditions in a bounded domain}\label{dirichlet_section}\ 

Let $d\in\N$, $\Lambda\subset\R^d$ be a bounded domain with piecewise smooth boundary $\partial\Lambda$ of class $C^2$ and $-\Delta$ be 
the Dirichlet Laplacian on $\Lambda$. We define $\abs{\cdot}$ and $\lrbr{\cdot}{\cdot}$ to be the Euclidean norm and the inner-product on $\R^d$ respectively. Let
  $$S:=H^1_0(\Lambda) \subseteq H:=L^2(\Lambda) \subseteq S^*,$$
where we endow $S$ with the equivalent norm $\norm{u}_{S}:=\left(\int_\Lambda\abs{\nabla u}^2\,d\xi\right)^{1/2}$. 

Let $\Psi(x) = \tilde\Psi(|x|): \R^d \to \R_+$ be a radially symmetric function with $\tilde\Psi: \R \to [0,+\infty)$ being even, convex, continuous, non-decreasing and satisfying $\tilde{\Psi}(0)=0$. We further assume
\begin{equation}\label{eqn:psi_growth_2}
    \Psi(x) \le C(|x|^2+1),\quad\forall x\in\R^d
\end{equation}
for some constant $C > 0$. Let $\Phi := \partial\Psi: \R^d \to 2^{\R^d}$. Explicit examples are given by:
\begin{enumerate}
  \item Singular $p$-Laplacian:  $\Phi_p(x) := \partial\left(\tfrac{1}{p}\abs{\cdot}^{p}\right)(x) = |x|^{p-1}\Sgn(x),\ p\in [1,2],$ where 
  \[\operatorname{Sgn}(x):=\left\{\begin{aligned}&\dfrac{x}{\abs{x}},&&\;\;\text{if}\;\;x\in\R^d\setminus\{0\},\\
                 &\left\{y\in\R^d\;\vert\;\abs{y}\le 1\right\},&&\;\;\text{if}\;\;x=0.
                \end{aligned}\right.\]
  Note that we include the limit case $p=1$.
  \item Minimal surface flow: $$\Phi_{\text{m.s.f.}}(x):=\partial\left(\sqrt{1+\abs{\cdot}^2}\right)(x)=\frac{x}{\sqrt{1+\abs{x}^2}},\quad x\in\R^d.$$
  \item Plastic antiplanar shear deformation: $$\Phi_{\text{p.a.s.}}(x):=\partial\left(y\mapsto\begin{cases}\frac{1}{2}\abs{y}^2,&\;\;\text{if}\;\;\abs{y}\le 1,\\ \abs{y}-\frac{1}{2},&\;\;\text{if}\;\;\abs{y}> 1,\end{cases}\right)(x)=\begin{cases}x,&\;\;\text{if}\;\;\abs{x}\le 1,\\ \operatorname{Sgn}(x),&\;\;\text{if}\;\;\abs{x}> 1,\end{cases},\;\; x\in\R^d.$$
  \item Curve shortening flow:  
      $$\Phi_{\arctan}(r) := \partial\left( s\mapsto s\arctan(s)-\frac{1}{2}\log(s^2+1)\right)(r) = \arctan(r),\quad r \in \R.$$
\end{enumerate}
We define 
  \[\phi(u) := \int_\Lambda \Psi(\nabla u(\xi))\ d\xi,\quad u\in S\]
and $A := \partial\phi: S \to 2^{S^*}$. Since $\nabla: S=H_0^1(\Lambda) \to L^2(\Lambda;\R^d)$ is a linear, continuous operator, Pratt's lemma \cite[Theorem 1]{pratt60} (using \eqref{eqn:psi_growth_2} and $\Psi \ge 0$) implies that $\phi$ is continuous on $S$.

\begin{rem}
We would like to point out that we do not consider the $1$-Laplace operator on $H=L^2(\Lambda)$,
that is, the $L^2(\Lambda)$-subdifferential of the energy
\[\ol{\phi}(u):=\begin{cases}\norm{Du}(\Lambda)+\int_{\partial\Lambda}\abs{u^\Lambda}\,d\Hscr^{d-1},&\;\;\text{if}\;\;u\in BV(\Lambda)\cap L^2(\Lambda),\\
+\infty,&\;\;\text{otherwise.}\end{cases}\]
where $\norm{Du}(\Lambda)$ is the total variation of $u$ (measured in $\Lambda$), $u^\Lambda$
is the boundary trace of $u$ on $\partial\Lambda$ and $\Hscr^{d-1}$ is the $(d-1)$-dimensional Hausdorff
measure on $\partial\Lambda$. $\ol{\phi}$ is the closure of $u\mapsto\int_\Lambda\abs{\nabla u}\,dx$, $u\in W^{1,1}_0(\Lambda)$ in $L^2(\Lambda)$ (cf.\ \cite[Proposition 11.3.2]{ABM}, \cite[Section 2]{BR12}).

The $L^2(\Lambda)$ subdifferential $\partial\ol{\phi}$ was characterized in \cite[Proposition 4.23 (1)]{KaSchu}. As our framework relies on the triple $S\subset H\subset S^\ast$, for our purposes, it is sufficient to consider $\phi(u):=\int_\Lambda\abs{\nabla u}\,dx$ for $u\in H^1_0(\Lambda)$. Since $\varphi$ is continuous with respect to this stronger $H^1_0(\Lambda)$-norm the maximal monotonicity of $A:=\partial\varphi: S\to 2^{S^*}$ follows from standard results. Note $\phi=\ol{\phi}$ on $S=H_0^1(\Lambda)$.
\end{rem}

Since $\nabla: S=H_0^1(\Lambda) \to L^2(\Lambda;\R^d)$ has as adjoint $\nabla^* = (-\Delta)^{-1}\circ\ \div$, the chain-rule for subgradients (cf.\ \cite[Proposition 7.8]{Show}) implies $\partial\phi = \nabla^* \circ \partial\left(\int_\Lambda \Psi(\cdot)\ d\xi\right) \circ \nabla$, i.e.\ $\nabla^* y \in \partial\phi(x) \subseteq S^*$ iff $y \in L^2(\Lambda;\R^d)$ and $y(\xi) \in \Phi(\nabla x(\xi))$ for almost every $\xi \in \Lambda$. In this case
\begin{equation}\label{eqn:char_D_pLE}
  \dualdel{S}{\nabla^* y}{z} = \int_\Lambda \lrbr{y(\xi)}{ \nabla z}\ d\xi. 
\end{equation}
Hence $A$ is an $S$-realization of the singular $\Phi$-Laplace operator
  \[\text{``$A(u):=-\div \Phi(\nabla u)$''}.\]

\begin{ex}\label{ex:SpLE_Dirichlet}
  Consider the stochastic, singular $\Phi$-Laplace equation
  \begin{equation}\label{eqn:SpLE}\begin{split}
          dX_t&\in\div \Phi(\nabla X_t)\,dt+\begin{cases} dN_t \\ B_t(X_t)dW_t \end{cases}, \quad t\in (0,T], \\
          X_0&=x,
      \end{split}\end{equation}
  with Dirichlet boundary conditions. Then Theorem \ref{thm:unique_ex_stoch_add_pathw}, Corollary \ref{cor:gen_RDS}, Theorem \ref{thm:unique_ex_stoch_mult} and Theorem \ref{thm:unique_ex_stoch_mult_all_ic} apply, proving the unique existence of a (limit) solution to \eqref{eqn:SpLE}.
\end{ex}
\begin{proof}
  By continuity, convexity and the growth bound of $\Psi$, $\phi:S \to \R_+$ is a convex, continuous function. By Proposition \ref{prop:subgradient} it only remains to prove that $\phi$ is non-expansive with respect to $J_n$. This follows from Propositions \ref{prop:brezis} and \ref{prop:quadraticgrowthcontraction} in the appendix.
\end{proof}

In order to prove ergodicity of the associated Markovian semigroup we need to assume
\begin{enumerate}
 \item[(C3)] For some $p \in [1,2)$ and for all $\nabla^* v \in \partial\phi(u) \subseteq S^*$
   \begin{equation}\label{eqn:coerc_D_pLE}
      \int_\Lambda \lrbr{v(\xi)}{  \nabla u(\xi)}\ d\xi \ge c\|u\|_H^p.
   \end{equation}
  For example, this is the case for the stochastic singular $p$-Laplace equation, i.e.\ for $\Phi(x) := \abs{x}^{p-1}\Sgn(x)$ with $p\in \left[1\vee\tfrac{2d}{2+d},2\right)$ (cf.\ also \cite{LiuToe1}). For $p=1$, the result was conjectured in \cite{CiotToe}. 
\end{enumerate}

\begin{ex}\label{ex:SpLE_Dirichlet_erg}
  Consider the stochastic, singular $\Phi$-Laplace equation
  \begin{equation}\label{eqn:SpLE_2}\begin{split}
          dX_t&\in\div \Phi(\nabla X_t)\,dt+B\,dW_t, \quad t\in (0,T], \\
          X_0&=x,
      \end{split}\end{equation}
  with Dirichlet boundary conditions and assume \textup{(C3)}. Then the associated Markovian semigroup $\{P_t\}$ is weak-$*$ mean ergodic.
\end{ex}
\begin{proof}
   First assume (C3). We prove that Hypothesis \ref{hyp:extinction} is satisfied by Remark \ref{rem:suff_L2}, (1). Using \eqref{eqn:coerc_D_pLE} and \eqref{eqn:char_D_pLE} we observe
      $$2\dualdel{S}{\nabla^* y}{x} = \int_\Lambda \lrbr{y(\xi) }{\nabla x(\xi)}\, d\xi \ge c\|x\|_H^p,$$
  for all $y \in L^2(\Lambda;\R^d)$ with $y(\xi) \in \Phi(\nabla x(\xi))$ for a.e.\ $\xi \in \Lambda$. Thus, we may apply Theorem \ref{thm:ergodicity}.
\end{proof}

In particular, this applies to the stochastic total variation flow $(\Phi_1(x) = \operatorname{Sgn}(x))$ in one and two space dimensions.

\subsubsection{Neumann boundary conditions in a bounded domain}

Let $\Lambda\subset\R^d$ be a bounded, open domain with Lipschitz boundary $\partial\Lambda$. We either assume that $\Lambda$ is convex or $\partial\Lambda$ is $C^2$ and convex. By the Rellich--Kondrachov compact embedding theorem, $S:=H^1(\Lambda)\subset L^2(\Lambda)=:H$ compactly (and densely).
$S$ is normed by $\norm{u}_S^2:=\int_\Lambda\abs{\nabla u}^2\,d\xi+\int_\Lambda\abs{u}^2\,d\xi$.
Let $J_n$ be the resolvent of the Riesz map of $S$, which is the $H^1(\Lambda)$ realization
of $-\Delta+\Id$, where $\Delta$ denotes the Neumann Laplacian on $\Lambda$.

Let $\tilde{\Psi}:\R\to[0,+\infty)$ be an Orlicz function, i.e., $\tilde{\Psi}$ is even, convex, continuous, non-decreasing and satisfies $\tilde{\Psi}(0)=0$. Set $\Psi(z):=\tilde{\Psi}(\abs{z})$, $z\in\R^d$. Assume that for some constant $C>0$,
\begin{equation}\label{eq:Neumann_linear_growth}\Psi(z)\le C(1+\abs{z}^2)\quad\forall z\in\R^d.\end{equation}
Let $\Phi:=\partial\Psi$. An explicit example is given by the
singular $p$-Laplacian nonlinearity
\[\Psi_p(z):=\tfrac{1}{p}\abs{z}^p,\quad z\in\R^d,\]
where $p\in [1,2]$. The nonlinear diffusion operator ``$A(u):=-\operatorname{div}[\Phi(\nabla u)]$''
is defined rigorously as the subdifferential of the convex potential
\[\phi(u):=\int_\Lambda\Psi(\nabla u)\,d\xi,\quad u\in S.\]
Taking \eqref{eq:Neumann_linear_growth} and $\Psi\ge 0$ into account,
we can apply Pratt's lemma \cite[Theorem 1]{pratt60} in order to see that
$\phi$ is continuous on $H^1(\Lambda)$.

As in Section \ref{dirichlet_section}, $A:=\partial\phi$ admits the following variational characterization:
For $u\in S$,
$\nabla^\ast y\in A(u)$ iff $\abs{y}\in H$ and $y(\xi)\in\Phi(\nabla u(\xi))$ for a.e. $\xi\in\Lambda$.
In this case,
\[\dualdel{S}{\nabla^\ast y}{v}=\int_\Lambda\lrbr{y(\xi)}{\nabla v(\xi)}\,d\xi,\quad v\in S.\] 

\begin{ex}
  Consider the stochastic $\Phi$-Laplace equation with Neumann boundary conditions
  \begin{equation}\label{eqn:phiLE}\begin{split}
          dX_t&\in \div \Phi(\nabla X_t)  \,dt +\begin{cases} dN_t \\ B_t(X_t)dW_t \end{cases}, \quad t\in (0,T], \\
          X_0&=x\in L^2(\Lambda).
      \end{split}\end{equation}
  Then Theorem \ref{thm:unique_ex_stoch_add_pathw}, Corollary \ref{cor:gen_RDS}, Theorem \ref{thm:unique_ex_stoch_mult} and Theorem \ref{thm:unique_ex_stoch_mult_all_ic} apply, proving the unique existence of a (limit) solution to \eqref{eqn:phiLE}.
\end{ex}
\begin{proof}
By Proposition \ref{prop:subgradient}, we are done if we can show that $\phi$ is non-expansive
with respect to $\{J_n\}$. Let $\{J_n^0\}$ be the resolvent of $-\Delta$ and $\{P_t^0\}$ be the associated $L^2$ semigroup.
In our situation,
\cite[Eq.\ (1.1)]{WangYan13} holds with $K=0$. Therefore by \cite[Theorem 1.2]{WangYan13} or \cite[Proposition 2.5.1]{Wang},
we have the following gradient estimate for all $f\in C_b^1(\ol{\Lambda})$
\[\abs{\nabla P_t^0 f}\le P_t^0\abs{\nabla f},\quad\forall t\ge 0.\]
Let $\{P_t\}$ be the semigroup associated to $\{J_n\}$.
By the Trotter product formula \cite[Ch. VIII.8]{ReSi1},
$P_t=e^{-t}P_t^0$. Hence for all $n\in\N$,
\begin{multline*}\abs{\nabla J_n u}=\lrabs{\nabla \int_0^\infty e^{-t}P_{t/n}(u)\,dt}=\lrabs{\nabla \int_0^\infty e^{-t-\frac{t}{n}}P_{t/n}^0(u)\,dt}\\
\le \int_0^\infty e^{-t-\frac{t}{n}} P_{t/n}^0\abs{\nabla u}\,dt=\int_0^\infty e^{-t}P_{t/n}\abs{\nabla u}\,dt=J_n\abs{\nabla u},\quad\forall u\in C_b^1(\ol{\Lambda}). 
\end{multline*}
Since $\{P_t\}$ and hence $\{J_n\}$ is Markovian symmetric on $L^2(\Lambda)$, by \cite[Theorem 3]{Mali}
  \begin{equation*}\label{phisemigroup}
      \int_\Lambda \tilde\Psi(J_n u(\xi)) d\xi \le \int_\Lambda\tilde\Psi(u(\xi))d\xi,\quad\forall u\in L^2(\Lambda).
  \end{equation*}
  Combining these two inequalities we obtain
  \begin{align*}
     \phi(J_n u) 
     &= \int_\Lambda \tilde\Psi(|\nabla J_n u(\xi)|)\ d\xi \\
     &\le \int_\Lambda \tilde\Psi(J_n |\nabla u|(\xi))\ d\xi \\
     &\le \int_\Lambda \tilde\Psi(|\nabla u|(\xi))\ d\xi 
     = \phi(u),  \quad\forall u\in H^1(\Lambda),
  \end{align*}
  by density of $C_b^1(\ol{\Lambda})\subset H^1(\Lambda)$ and dominated convergence.
\end{proof}

\begin{rem}
Using the results from \cite{Wang,Wang05}, it is possible to deal with curved domains,
so that $\Lambda$ is replaced by a compact Riemannian manifold or a Riemannian manifold equipped with a weight.
\end{rem}

To prove ergodicity we again need to require a stronger coercivity condition, i.e.\ we assume
\begin{equation}\label{eqn:coerc_phiLE}
  \int_\Lambda \lrbr{\Phi(\nabla u(\xi))}{\nabla u(\xi)}\,d\xi \ge c \|u\|_H^p, 
\end{equation}
for some $c > 0$ and some $p \in [1,2)$. For example, by the Poincar\'e inequality, this is satisfied by the Neumann $p$-Laplacian with mean zero starting points for $p\in \left[ 1\vee\tfrac{2d}{2+d},2 \right)$, i.e., we replace
the energy space $H^1(\Lambda)$ above by the closed subspace $S:=H^1_{\textup{av}}(\Lambda):=\{u\in H^1(\Lambda)\;\vert\;\int_\Lambda u=0\}$ (cf.\  e.g.\ \cite{GigaKohn}). In this setup, we also need to
take $H:=L^2_{\textup{av}}(\Lambda):=\{u\in L^2(\Lambda)\;\vert\;\int_\Lambda u=0\}$.

By the same proof as for Example \ref{ex:SpLE_Dirichlet_erg} we obtain
\begin{ex}
  Consider the stochastic $\Phi$-Laplace equation with Neumann boundary conditions
  \begin{equation}\label{eqn:phiLE_add}\begin{split}
          dX_t&\in\operatorname{div}[\Phi(\nabla X_t) ]\,dt + B\,dW_t, \quad t\in (0,T], \\
          X_0&=x\in L^2_{\textup{av}}(\Lambda),
      \end{split}\end{equation}
  for $\Phi$ satisfying \eqref{eqn:coerc_phiLE}. Then the associated Markovian semigroup $\{P_t\}$ is weak-$*$ mean ergodic.
\end{ex}

\appendix

\section{A deterministic existence result}

Consider the following multi-valued Cauchy problem:
\begin{equation}\label{abstractcauchy}
\left\{
\begin{aligned}
\frac{d}{dt}u(t)+J(t,u(t))&\in F(t,u(t))\qquad\text{for a.e}\;\; t\in [0,T],\\
u(0)&=u_0.
\end{aligned}
\right.
\end{equation}
Here $J:[0,T]\times S\to S^\ast$ and $F:[0,T]\times S\to 2^{S^\ast}$.

\begin{defi}
A \emph{solution} of \eqref{abstractcauchy} is a function $u\in W^{1,2}(0,T)$
such that
\[\frac{d}{dt}u(t)+J(t,x(t))=f(t)\]
a.e. on $[0,T]$, $u(0)=u_0$ and $f\in L^2([0,T];S^\ast)$ such
that $f(t)\in F(t,u(t))$ for a.e. $t\in [0,T]$.
\end{defi}

Consider the following conditions on $J$ and $F$:
\begin{enumerate}[(J1)]
 \item $t\mapsto J(t,x)$ is measurable for all $x\in S$.
 \item $x\mapsto J(t,x)$ is demicontinuous and for almost all $t \in [0,T]$, $J(t,\cdot)$ satisfies: if $u_n\rightharpoonup u$ weakly in $S$ and
    \[\limsup_{n}\dualdel{S}{J(t,u_n)-J(t,u)}{u_n-u}\le 0,\]
   then $u_n\to u$ strongly in $S$.

  Note that the last property is satisfied whenever $J(t,\cdot)$ is strongly monotone, i.e.  
     \[\dualdel{S}{A(u)-A(v)}{u-v}\ge c\norm{u-v}^2_S\quad\forall u,v\in S,\ \text{for some } c>0.\]
 \item $\norm{J(t,x)}_{S^\ast}\le j_1(t)+c_1\norm{x}_S$ a.e. on $[0,T]$ for all $x\in S$ with $j_1\in L^2([0,T])$, $c_1>0$.
 \item $\dualdel{S}{J(t,x)}{x}\ge c_2\norm{x}_S^2-c_2'\norm{x}_S-j_2(t)$ a.e. on $[0,T]$ for all
$x\in S$ with $c_2,c_2'>0$ and $j_2\in L^1([0,T])$.
\end{enumerate}
\begin{enumerate}[(F1)]
 \item The values of $F$ are non-empty, closed and convex.
 \item $t\mapsto F(t,x)$ is measurable for all $x\in S$.
 \item $\operatorname{Gr} F(t,\cdot)$ is sequentially closed in $S\times S^\ast_w$ for almost all $t\in [0,T]$.
 \item $\norm{F(t,x)}_{S^\ast}\le f_1(t)+c_3\norm{x}_S$ for a.e. $t\in [0,T]$ with $f_1\in L^2([0,T])$, $c_3>0$.
 \item $\dualdel{S}{y}{x}\le \gamma(t)$ for a.e. $t\in [0,T]$, all $x\in S$, $y\in F(t,x)$ and some $\gamma\in L^1([0,T])_{+}$.
\end{enumerate}
Here ``$\operatorname{Gr}$'' denotes the graph of a multi-valued map,
\[L^1([0,T])_{+}:=\{f\in L^1([0,T])\;\vert\;f\ge 0\;\text{a.e. on}\;[0,T]\},\]
and ``$S_w^\ast$'' denotes the weak
Hilbert topology of $S^\ast$.

The following theorem can be found in \cite[Theorem I.2.40]{HuPapa2} or \cite[Theorem 3]{PapaPapaYanna}.
See \cite{BianWebb} for similar results.
\begin{thm}[Hu--Papageorgiou--Papalini--Yannakakis]\label{thm:Hupapa}
Suppose that $J$ and $F$ satisfy \textup{(J1)}--\textup{(J4)} and \textup{(F1)}--\textup{(F5)} resp.
Then the set of solutions to inclusion \eqref{abstractcauchy} for initial
point $u_0\in H$ is nonempty, weakly compact in $W^{1,2}(0,T)$ and compact
in $C([0,T];H)$.
\end{thm}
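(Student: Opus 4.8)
The plan is to establish existence by a Galerkin scheme driven by energy estimates, and then to read off the two compactness assertions from those same (uniform) estimates together with the compact embedding $S\subset H$; the multi-valuedness is absorbed through a measurable selection of $F$ and the weak closedness of $\operatorname{Gr}F(t,\cdot)$ required in (F3). First I would derive the a priori bounds. Testing the inclusion with the solution and using the chain rule in the Gelfand triple gives, for a.e.\ $t$,
\[\tfrac12\tfrac{d}{dt}\|u(t)\|_H^2+\dualdel{S}{J(t,u(t))}{u(t)}=\dualdel{S}{f(t)}{u(t)},\quad f(t)\in F(t,u(t)).\]
Applying the coercivity (J4) on the left and the pairing bound (F5) on the right, absorbing the $c_2'\|u\|_S$ term into $c_2\|u\|_S^2$, and invoking Gronwall's inequality, one bounds $u$ in $L^\infty([0,T];H)\cap L^2([0,T];S)$ purely in terms of $\|u_0\|_H$, $j_2$ and $\gamma$. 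The linear growth conditions (J3) and (F4) then bound $J(\cdot,u)$ and $f$ in $L^2([0,T];S^\ast)$, so $\tfrac{d}{dt}u$ is bounded in $L^2([0,T];S^\ast)$ and $u$ is bounded in $W^{1,2}(0,T)$. The point is that these bounds are uniform over the entire solution set.

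For existence I would project onto $H_m=\operatorname{span}\{e_1,\dots,e_m\}$, where $\{e_k\}$ is the eigenbasis of the Riesz map from Section \ref{sec:det_case}. Since $J(t,\cdot)$ is demicontinuous (J2), hence continuous on the finite-dimensional $H_m$, and since (F1)--(F4) make $F(t,\cdot)$ upper semicontinuous with convex compact values there, the projected Cauchy problem is a Carath\'eodory differential inclusion; standard finite-dimensional existence theory, together with the a priori bound ruling out blow-up, produces approximants $u_m\in W^{1,2}(0,T)$ bounded uniformly in $m$. Passing to a subsequence, $u_m\rightharpoonup u$ in $W^{1,2}(0,T)$ with $J(\cdot,u_m)\rightharpoonup\xi$ and $f_m\rightharpoonup f$ in $L^2([0,T];S^\ast)$, while the compactness of $S\subset H$ yields, through the Aubin--Lions--Simon lemma and the continuous embedding $W^{1,2}(0,T)\hookrightarrow C([0,T];H)$, strong convergence $u_m\to u$ in $C([0,T];H)$.

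The heart of the proof is the identification of the limits. Using the equation together with the strong $H$-convergence of $u_m$, one checks $\limsup_m\int_0^T\dualdel{S}{J(t,u_m)}{u_m-u}\,dt\le 0$; the structural property in (J2) then upgrades this to strong convergence $u_m\to u$ in $L^2([0,T];S)$, and demicontinuity of $J(t,\cdot)$ forces $\xi=J(\cdot,u)$. For the multi-valued part I would pass to the limit in $f_m(t)\in F(t,u_m(t))$: the now-available strong convergence of $u_m$ in $L^2([0,T];S)$, the weak convergence $f_m\rightharpoonup f$, and the convexity (F1) combine via a Mazur-type argument with the weak closedness of $\operatorname{Gr}F(t,\cdot)$ in $S\times S^\ast_w$ (F3) to show that the $L^2$-realization of $F$ has weakly closed graph, whence $f(t)\in F(t,u(t))$ for a.e.\ $t$. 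Thus $\tfrac{d}{dt}u+J(\cdot,u)=f\in F(\cdot,u)$ and $u$ is a solution, which also shows the solution set is nonempty.

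Finally, the uniform $W^{1,2}$-bound makes the solution set weakly relatively compact in $W^{1,2}(0,T)$; the very same limit-identification argument applied to an arbitrary weakly convergent net of solutions shows the set is weakly closed, giving weak compactness in $W^{1,2}(0,T)$, and the Aubin--Lions--Simon step then upgrades this to compactness in $C([0,T];H)$. I expect the main obstacle to be this simultaneous limit passage: strong $L^2([0,T];S)$-convergence must be extracted from the single-valued, (S)$_+$-type operator $J$ while only weak graph closedness is available for the multi-valued $F$, and the two mechanisms must be reconciled on one and the same subsequence.
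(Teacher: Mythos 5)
You should first be aware that the paper itself does not prove Theorem \ref{thm:Hupapa}: it quotes it from the literature (\cite[Theorem I.2.40]{HuPapa2}, \cite[Theorem 3]{PapaPapaYanna}, cf.\ also \cite{BianWebb}), so your argument has to stand on its own — and it has a genuine gap at exactly the step you yourself flag as the heart of the proof. You run a Galerkin scheme directly on the inclusion and obtain $u_m\rightharpoonup u$ in $W^{1,2}(0,T)$, $J(\cdot,u_m)\rightharpoonup\xi$ and $f_m\rightharpoonup f$ in $L^2([0,T];S^\ast)$, and you then claim that ``the equation together with the strong $H$-convergence of $u_m$'' yields $\limsup_m\int_0^T\dualdel{S}{J(t,u_m)}{u_m-u}\,dt\le 0$. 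Expanding that quantity via the Galerkin equation leaves you with the term $\int_0^T\dualdel{S}{f_m}{u_m-u}\,dt$ (equivalently, with $\int_0^T\dualdel{S}{f_m}{u_m}\,dt$ up to terms that do converge), which is a pairing of two sequences converging only \emph{weakly}, $f_m$ in $L^2([0,T];S^\ast)$ and $u_m$ in $L^2([0,T];S)$. Strong convergence of $u_m$ in $L^2([0,T];H)$, or even in $C([0,T];H)$, is of no use here: $f_m$ is bounded only in $S^\ast$, not in $H$, and the bracket $\dualdel{S}{\cdot}{\cdot}$ cannot be estimated through the $H$-norm. In the single-valued pseudomonotone theory this term is harmless because the forcing is a \emph{fixed} element of $L^2([0,T];S^\ast)$ (weak-against-strong pairing); here $f_m$ is a selection of $F(\cdot,u_m)$ and moves with $u_m$. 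The hypotheses give no handle on it: (F5) yields only the one-sided bound $\int_0^T\dualdel{S}{f_m}{u_m}\,dt\le\int_0^T\gamma(t)\,dt$, which is not comparable to $\int_0^T\dualdel{S}{f}{u}\,dt$, and (F3) can be invoked only \emph{after} strong $L^2([0,T];S)$-convergence of $u_m$ is known — which is precisely what this step was supposed to produce. Since $F$ is not assumed monotone, no Minty-type trick rescues the circularity. This is why the cited proofs do not discretize the inclusion: they freeze the forcing, solve the single-valued problem $u'+J(t,u)=g$, $u(0)=u_0$, for fixed $g$ ranging in a bounded, convex, weakly compact subset of $L^2([0,T];S^\ast)$ determined by the a priori bounds (there the Galerkin passage does close, the pairing being weak-against-fixed), and then apply a multivalued fixed point/selection argument to $g\mapsto\{\text{selections of }F(\cdot,u_g(\cdot))\}$, using (F1)--(F4) for the required closedness; the compactness assertions follow from the same estimates and closedness.

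A secondary error: weak convergence in $W^{1,2}(0,T)$ plus Aubin--Lions--Simon plus the \emph{continuous} embedding $W^{1,2}(0,T)\hookrightarrow C([0,T];H)$ does \emph{not} give $u_m\to u$ in $C([0,T];H)$. Bounds in $L^\infty([0,T];H)\cap L^2([0,T];S)$ with derivatives bounded in $L^2([0,T];S^\ast)$ yield compactness only in $L^2([0,T];H)$ and in $C([0,T];S^\ast)$: taking $u_m(t)=\phi_m(t)e_m$ with $e_m$ the $m$-th eigenvector of $T$ normalized in $H$ and $\phi_m$ a tent function of height $1$ and width of order $\lambda_m^{-1}$, all three bounds hold uniformly while $\sup_{t}\norm{u_m(t)}_H=1$ and $u_m\to0$ in $L^2([0,T];H)$, so no subsequence converges uniformly in $H$. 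Uniform $H$-convergence must be derived a posteriori from strong $L^2([0,T];S)$-convergence via the integration-by-parts estimate $\sup_t\norm{u_m(t)-u(t)}_H^2\le\norm{u_m(0)-u(0)}_H^2+2\norm{u_m'-u'}_{L^2([0,T];S^\ast)}\norm{u_m-u}_{L^2([0,T];S)}$ — so it suffers from the same missing ingredient as above. To your credit, you identified the simultaneous limit passage as the main obstacle; the point is that, with the tools your scheme provides, it is not merely delicate but unobtainable.
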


\section{Maximal monotone operators depending on a parameter}

Let $S$ be a separable Hilbert space, $(E,\Bscr,\mu)$ be a $\sigma$-finite measure space and $A:E \times S\to 2^{S^\ast}$ satisfy:
\begin{enumerate}
\item[(MM)] For $\mu$-a.a. $u \in E$ the map $x\mapsto A(u,x)$ is maximal monotone with nonempty values.
\end{enumerate}

\begin{defi}
  A map $A:S\to 2^{S^\ast}$ with non-empty values is called \emph{strongly-to-weakly upper semi-continuous} if for each $x\in S$ and for each weakly open set $V$ in $S^\ast$ such that $A(x)\subset V$ and for all $\{x_n\}\subset S$ with $x_n\to x$ strongly, we have that $A(x_n)\subset V$ for large $n$.

  A map $A:S\to 2^{S^\ast}$ with non-empty values is called \emph{weakly upper hemicontinuous} if for each $x,y\in S$, $\lambda\in [0,1]$ and for each weakly open set $V$ in $S^\ast$ such that $A(x+\lambda y)\subset V$ and for all $\{\lambda_n\}\subset [0,1]$ with $\lambda_n\to\lambda$, we have that $A(x+\lambda_n y)\subset V$ for large $n$.
\end{defi}

A map $A:S\to 2^{S^\ast}$ with non-empty values is weakly upper hemicontinuous whenever it is strongly-to-weakly upper semi-continuous.

Let $\tilde{A}$ be the mapping defined by:
\begin{align*}
  \tilde{A}&:     L^2(E,\Bscr,\mu;S)\to 2^{L^2(E,\Bscr,\mu;S^*)},\\
  \tilde{A}&:     x\mapsto\{y \in L^2(E,\Bscr,\mu;S^*)\;|\; y(u) \in A(u,x(u)),\ \mu\text{-a.e.}\; u \in E \}
\end{align*}
Note that $\tilde A (x)$ might be empty for some (or all) $x\in L^2(E,\Bscr,\mu;S)$.

\begin{prop}\label{prop:graphclosed}
  Suppose that \textup{(MM)} holds. Then the graph of $\tilde{A}$ is sequentially closed in $L^2(E,\Bscr,\mu;S)\times L^2_w(E,\Bscr,\mu;S^\ast)$.
\end{prop}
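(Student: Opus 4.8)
The plan is to establish the membership $[x,y]\in\tilde A$ fibrewise, i.e.\ that $y(u)\in A(u,x(u))$ for $\mu$-a.e.\ $u$, by combining the fibrewise maximal monotonicity in (MM) with a Minty-type inequality extracted in the limit. Thus let $x_k\to x$ strongly in $L^2(E,\Bscr,\mu;S)$ and $y_k\rightharpoonup y$ weakly in $L^2(E,\Bscr,\mu;S^\ast)$ with $[x_k,y_k]\in\tilde A$. Since $S$ is a separable Hilbert space, $S^\ast$ is reflexive and $L^2(E,\Bscr,\mu;S^\ast)^\ast=L^2(E,\Bscr,\mu;S)$, so the weak convergence of $y_k$ pairs correctly against test functions in $L^2(E,\Bscr,\mu;S)$.

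First step — a localized monotonicity inequality. Fix any selection $[\chi,\eta]\in\tilde A$ and any $F\in\Bscr$. By monotonicity of $A(u,\cdot)$ (from (MM)) we have, $\mu$-a.e.,
\[\dualdel{S}{y_k(u)-\eta(u)}{x_k(u)-\chi(u)}\ge 0.\]
Integrating this $L^1$ pairing over $F$ and letting $k\to\infty$ yields $\int_F\dualdel{S}{y(u)-\eta(u)}{x(u)-\chi(u)}\,d\mu\ge0$. The only delicate term is $\int_F\dualdel{S}{y_k}{x_k}\,d\mu=\int_E\dualdel{S}{y_k}{\ind{F}x_k}\,d\mu$, which I split as $\int_E\dualdel{S}{y_k}{\ind{F}x}\,d\mu+\int_E\dualdel{S}{y_k}{\ind{F}(x_k-x)}\,d\mu$; the first converges by weak convergence of $y_k$ against $\ind{F}x\in L^2(E;S)$, the second vanishes since $\sup_k\norm{y_k}$ is finite and $x_k\to x$ strongly. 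The remaining terms converge by weak convergence of $y_k$ (paired with $\ind{F}\chi$) and strong convergence of $x_k$ (paired with $\eta$). As $F\in\Bscr$ was arbitrary, this gives
\[\dualdel{S}{y(u)-\eta(u)}{x(u)-\chi(u)}\ge 0\quad\text{for $\mu$-a.e.\ }u,\ \text{for each }[\chi,\eta]\in\tilde A.\]

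Second step — localization to pointwise membership. To upgrade this inequality, which tests $[x(u),y(u)]$ against one selection at a time, to genuine membership, I would invoke a Castaing representation $\{[\chi_j,\eta_j]\}_{j\in\N}$ of the measurable multifunction $u\mapsto\operatorname{Gr}A(u,\cdot)\subset S\times S^\ast$, so that for a.e.\ $u$ the set $\{[\chi_j(u),\eta_j(u)]:j\in\N\}$ is dense in $\operatorname{Gr}A(u,\cdot)$. As the $\chi_j,\eta_j$ need not be square-integrable, I would first truncate: using $\sigma$-finiteness of $\mu$ and splicing each $[\chi_j,\eta_j]$ with the fixed baseline selection $[x_1,y_1]\in\tilde A$ outside the bounded region $\{\norm{\chi_j(u)}_S\le R,\ \norm{\eta_j(u)}_{S^\ast}\le R\}$, I obtain admissible elements of $\tilde A$ to which the first step applies on every such region; letting $R\to\infty$ produces the first-step inequality for each $[\chi_j,\eta_j]$ off a single $\mu$-null set. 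By density and continuity of the pairing, for a.e.\ $u$ we then have $\dualdel{S}{y(u)-w}{x(u)-v}\ge0$ for all $[v,w]\in\operatorname{Gr}A(u,\cdot)$, whence maximal monotonicity of $A(u,\cdot)$ forces $y(u)\in A(u,x(u))$, i.e.\ $[x,y]\in\tilde A$.

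I expect the main obstacle to be precisely this second, measure-theoretic, step: weak convergence in the Bochner space $L^2(E;S^\ast)$ does \emph{not} localize to pointwise weak convergence, so one cannot simply pass to fibrewise limits and invoke the strong--weak closedness of the maximal monotone graph $\operatorname{Gr}A(u,\cdot)$ used in (F3) (cf.\ \cite[Ch.\ 2, Proposition 1.1]{Barb2}). The localized Minty inequality circumvents this, but recovering pointwise membership from inequalities tested against one selection at a time requires a measurable/Castaing representation of the graph together with the truncation needed to remain in $L^2$; this is where measurability of $A$ in $u$ (implicit in the standing setup) enters. As a fallback I would instead characterize membership through the resolvent $(i_S+\lambda A(u,\cdot))^{-1}$, which is single-valued, nonexpansive, and depends continuously and measurably on its data, thereby replacing the selection argument by a limit of continuous operations.
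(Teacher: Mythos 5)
Your first step is correct: integrating the fibrewise monotonicity over an arbitrary $F\in\Bscr$ and passing to the limit does give $\dualdel{S}{y(u)-\eta(u)}{x(u)-\chi(u)}\ge 0$ $\mu$-a.e.\ for each fixed $[\chi,\eta]\in\operatorname{Gr}\tilde{A}$. The genuine gap is in your second step, exactly where you placed it, but it is not a removable technicality: a Castaing representation of $u\mapsto\operatorname{Gr}A(u,\cdot)$ requires this multifunction to be \emph{measurable}, and no measurability in $u$ is assumed in Proposition \ref{prop:graphclosed}. The hypothesis is (MM) alone, i.e.\ fibrewise maximal monotonicity; measurability enters the paper only as a separate assumption later (condition (1) of Proposition \ref{prop:max_monotone}, resp.\ (A4) of Hypothesis \ref{hyp:A}), and this proposition is deliberately proved without it so that it can be quoted inside the proof of Proposition \ref{prop:max_monotone}. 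Nor can measurability be recovered from (MM): take $E=[0,1]$ with Lebesgue measure, $S=\R$, and $A(u,x)=\sgn(x)+b(u)$ with $b:[0,1]\to[0,\tfrac12]$ non-measurable. Every fibre is maximal monotone and $\operatorname{Gr}\tilde{A}\neq\emptyset$ (it contains the pair $x\equiv 0$, $y\equiv\tfrac12$), so the statement is not vacuous here; but if a countable family $\{[\chi_j,\eta_j]\}$ of measurable selections were pointwise dense in the graphs, then $1+b(u)=\inf\{\eta_j(u)\mid \chi_j(u)>0\}$ for a.e.\ $u$, making $b$ measurable --- a contradiction. So under the stated hypothesis your second step cannot even begin; what you prove is the proposition under an extra assumption, which is strictly weaker than what is claimed.

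The paper's proof replaces your Castaing family by a device built from the data alone, which is the idea you are missing: after extracting a subsequence with $x_{n_k}(u)\to x(u)$ in $S$ a.e., it applies the Banach--Saks theorem to $\{y_n\}$, so that the Ces\`aro means $w_n=\frac1n\sum_{i\le n}y_i$ converge \emph{strongly} in $L^2(E,\Bscr,\mu;S^\ast)$ to $\eta$, hence a.e.\ strongly in $S^\ast$ along a further subsequence. Then, for a.e.\ \emph{fixed} $u$, strong-to-weak upper semicontinuity of the maximal monotone map $A(u,\cdot)$ (\cite[Ch.\ 3, Theorem 1.28]{HuPapa1}) puts $y_{n_k}(u)$ eventually inside any weakly open $V\supseteq A(u,x(u))$, so $\eta(u)=\lim_k w_{n_k}(u)\in\ol{\co V}$, and intersecting over all such $V$ (the values being weakly closed and convex) yields $\eta(u)\in A(u,x(u))$; no $u$-measurability is ever used. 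Note also that even in the applications where (A4) does hold, your route still owes a proof that measurability of $u\mapsto A(u,x)$ for each fixed $x$ implies measurability of the graph multifunction (product measurability via \cite{Zyg} plus a projection argument on a complete measure space, before Castaing's theorem applies), so ``implicit in the standing setup'' hides real work even there. Your resolvent fallback suffers from the same measurability defect, and in addition the merely weak convergence of $y_k$ does not pass through the nonlinear resolvent.
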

\begin{proof}
  Our proof is inspired by \cite[proof of Proposition (3.4)]{AttDam}.
  Let $x_n\to x$ in $L^2(E,\Bscr,\mu;S)$, such that $[x_n,y_n]\in\operatorname{Gr}\tilde{A}$ exists for each $n\in\N$ and assume that $y_n\rightharpoonup\eta$ weakly in $L^2(E,\Bscr,\mu;S^\ast)$ for some $\eta\in L^2(E,\Bscr,\mu;S^\ast)$. Let us extract a subsequence $\{x_{n_k}\}$ of $\{x_n\}$ such that $x_{n_k}(u)\to x(u)$ for $\mu$-a.e.\ $u\in E$.
  By the Banach-Saks theorem,
    \[w_{n}:=\frac{1}{n}\sum_{i=1}^{n} y_i\]
  converges to $\eta$ strongly in $L^2(E,\Bscr,\mu;S^\ast)$ (and hence $\{w_{n_k}\}$, too). By extracting another subsequence, if necessary, there is a $\mu$-null set $N\in\Bscr$ such that for all $u \in E \setminus N$
    \[x_{n_k}(u)\to x(u)\text{\;\;strongly in $S$},\quad w_{n_k}(u)\to\eta(u)\text{\;strongly in $S^\ast$}\]
  and $A(u,\cdot)$ is maximal monotone. By \cite[Ch. 3, Theorem 1.28]{HuPapa1}, $A(u,\cdot)$ is strongly-to-weakly upper semi-continuous. Let $u \in E \setminus N$ and $V$ be any weakly open neighborhood of $A(u,x(u))$. Then $y_{n_k}(u)\in V$ for large $k$ and hence $w_{n_k}(u)\in\textup{co}V$. As a result, $\eta(u)=\lim_k w_{n_k}(u)\in\ol{\textup{co}V}$. Since $V$ was chosen arbitrarily and $A(u,x(u))$ is a weakly closed, convex set (see e.g.\ \cite[Ch. 3, Proposition 1.14]{HuPapa1}), we conclude
    \[\eta(u) \in \bigcap_{\substack{V\text{\;weakly open}\\V\supset A(u,x(u))}}\ol{\textup{co}V}=A(u,x(u)).\]
  Since we can repeat the argument for all $u \in E\setminus N$, we get that $[x,\eta]\in\operatorname{Gr}\tilde{A}$.
\end{proof}

\begin{lem}\label{lem:hemi}
  Suppose that $A:S\to 2^{S^\ast}$ is a map with non-empty values. Then the following statements are equivalent.
  \begin{enumerate}[(i)]
    \item $A$ is maximal monotone.
    \item $A$ is monotone, has weakly compact and convex values and is weakly upper hemicontinuous.
  \end{enumerate}
\end{lem}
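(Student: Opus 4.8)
The plan is to establish the two implications separately, leaning throughout on one structural fact: since $A$ has nonempty values on all of $S$, its domain is the whole space, so $A$ is \emph{locally bounded} as a monotone operator (Rockafellar's local boundedness theorem, see e.g.\ \cite[Ch.\ 3]{HuPapa1}). This is the device that produces bounded nets and lets us extract weak limits, and it is used in every nontrivial step below.

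First I would prove (i) $\Rightarrow$ (ii). Monotonicity is immediate. Convexity and strong closedness of each value $A(x)$ follow at once from maximality: if $y_1,y_2\in A(x)$ and $y=\lambda y_1+(1-\lambda)y_2$, then monotonicity gives $\dualdel{S}{y-v}{x-u}\ge 0$ for every $[u,v]\in A$, so $[x,y]\in A$ by maximality; closedness is the same computation with a strongly convergent sequence in place of the convex combination. Being convex and strongly closed, $A(x)$ is weakly closed, and by local boundedness it is bounded, hence weakly compact in the Hilbert space $S^\ast$. For weak upper hemicontinuity I would argue by contradiction along a segment: if $A(x+\lambda_n y)\not\subset V$ for a weakly open $V\supset A(x+\lambda y)$ with $\lambda_n\to\lambda$, choose $z_n\in A(x+\lambda_n y)\setminus V$; local boundedness makes $\{z_n\}$ bounded, so a subsequence satisfies $z_{n_k}\rightharpoonup z$, and the strong--weak closedness of the graph of a maximal monotone operator (the demiclosedness property: $u_n\to u$, $v_n\rightharpoonup v$, $[u_n,v_n]\in A$ imply $[u,v]\in A$) forces $z\in A(x+\lambda y)\subset V$, contradicting $z\in\overline{S^\ast\setminus V}^{\,w}=S^\ast\setminus V$.

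For (ii) $\Rightarrow$ (i) I would run a Minty-type segment argument. Suppose $[x_0,y_0]$ is monotonically related to the whole graph, i.e.\ $\dualdel{S}{y_0-v}{x_0-u}\ge 0$ for all $[u,v]\in A$; the goal is $y_0\in A(x_0)$. Fix $z\in S$, set $u_t:=x_0+tz$ and pick $v_t\in A(u_t)$; the monotone relation yields $\dualdel{S}{y_0-v_t}{z}\le 0$ for $t>0$. Letting $t\downarrow 0$ along a sequence, local boundedness gives a weak limit $v_t\rightharpoonup v_\ast$, and weak upper hemicontinuity together with weak closedness of $A(x_0)$ forces $v_\ast\in A(x_0)$, so that $\dualdel{S}{y_0-v_\ast}{z}\le 0$ with $v_\ast\in A(x_0)$. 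Thus for every direction $z$ there is a point of $A(x_0)$ that is not strictly separated from $y_0$ in that direction. If $y_0\notin A(x_0)$, then since $A(x_0)$ is weakly compact and convex I could strictly separate it from $y_0$ by Hahn--Banach, producing $z\in S=S^{\ast\ast}$ with $\dualdel{S}{y_0-v}{z}>0$ for all $v\in A(x_0)$, contradicting the previous sentence. Hence $y_0\in A(x_0)$ and $A$ is maximal.

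The main obstacle will be the compactness bookkeeping rather than any single hard inequality: in both directions the crux is manufacturing a \emph{bounded} net so that weak subsequential limits exist, and this is precisely where everywhere-definedness enters through local boundedness of monotone operators. The remaining subtlety is the weak-upper-hemicontinuity step, namely showing that a weak subsequential limit of selections $v_t\in A(x_0+tz)$ lands back in $A(x_0)$; I would justify this using weak closedness of $A(x_0)$ together with the fact that a weakly compact convex set coincides with the intersection of the weak closures of its weakly open neighborhoods.
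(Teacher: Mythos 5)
Your proof is correct, but it takes a genuinely different route from the paper, whose entire proof is the citation \cite[Propositions 2.6.4 and 2.6.5]{ChangZhang}. You give a self-contained argument, and its three pillars are all sound: (a) since $A$ has nonempty values on all of $S$, monotonicity alone yields local boundedness everywhere (Rockafellar), which is exactly what produces the bounded sequences in both directions; (b) demiclosedness of maximal monotone graphs (sequential closedness in $S\times S^\ast_w$ --- the same fact the paper invokes elsewhere via \cite[Ch.~2, Proposition 1.1]{Barb2}) gives weak upper hemicontinuity and, together with convexity and boundedness of values plus reflexivity of $S^\ast$, weak compactness of values in (i)$\Rightarrow$(ii); (c) the Minty-type segment argument combined with Hahn--Banach strict separation of $y_0$ from the weakly compact convex set $A(x_0)$ gives maximality in (ii)$\Rightarrow$(i). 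The one step worth flagging is the identification $v_\ast\in A(x_0)$: your justification --- that a weakly compact set equals the intersection of the weak closures of its weakly open neighborhoods --- is valid (compactness plus Hausdorffness of the weak topology suffices, so convexity is not even needed there), and it is the same device the paper uses at the end of the proof of Proposition \ref{prop:graphclosed}, where the intersection is taken over sets $\overline{\operatorname{co} V}$ instead. What your route buys is transparency about precisely where the standing hypothesis of nonempty values on all of $S$ enters (local boundedness, hence weak compactness of values and extraction of weak limits); what the paper's citation buys is brevity, at the cost of hiding that the quoted propositions rest on the very same ingredients.
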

\begin{proof}
  Follows from \cite[Propositions 2.6.4 and 2.6.5]{ChangZhang}.

\end{proof}

\begin{prop}\label{prop:max_monotone}
  Let $(E,\Bscr,\mu)$ be a complete $\sigma$-finite measure space. Suppose that $A:E \times S\to 2^{S^\ast}$ is a map with non-empty values such that
  \begin{enumerate}
    \item[\textup{(1)}] $u \mapsto A(u,x)$ is $\Bscr$-measurable for all $x \in S$.
    \item[\textup{(2)}] $x \mapsto A(u,x)$ is maximal monotone for all $u \in E$.
    \item[\textup{(3)}] $\sup_{y\in A(x,u)}\norm{y}_{S^\ast}\le f(u)+C\norm{x}_S$ for $\mu$-a.a.\ $u \in E$ for all $x\in S$ and some $C>0$, $f\in L^2(E,\Bscr,\mu)_{+}$.
  \end{enumerate}
  Then $\tilde{A}:L^2(E,\Bscr,\mu;S)\to 2^{L^2(E,\Bscr,\mu;S^*)}$ is a maximal monotone map with non-empty values.
\end{prop}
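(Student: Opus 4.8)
The plan is to verify the pointwise characterization of maximal monotonicity furnished by Lemma \ref{lem:hemi}: I will show that $\tilde A$ is monotone, has non-empty weakly compact convex values, and is weakly upper hemicontinuous. Since $\tilde A$ is assembled fiberwise from the operators $A(u,\cdot)$, each property reduces to its pointwise counterpart, combined with the graph-closedness already proved in Proposition \ref{prop:graphclosed} and the growth bound (3).

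First I would settle non-emptiness of the values, which I expect to be the main obstacle. Fix $x\in L^2(E,\Bscr,\mu;S)$. Condition (1), together with the strongly-to-weakly upper semicontinuity of $x\mapsto A(u,x)$ (a consequence of the pointwise maximal monotonicity (2), cf.\ \cite[Ch.\ 3, Theorem 1.28]{HuPapa1}), yields, exactly as in the verification of (F2) above, that $(u,x)\mapsto A(u,x)$ is product measurable and hence superpositionally measurable. Thus $u\mapsto A(u,x(u))$ is a $\Bscr$-measurable multifunction with non-empty closed values in the separable space $S^*$, so by the Kuratowski--Ryll-Nardzewski selection theorem it admits a $\Bscr$-measurable selection $y(u)\in A(u,x(u))$. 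The growth bound (3) gives $\|y(u)\|_{S^*}\le f(u)+C\|x(u)\|_S$, whose right-hand side is in $L^2(E,\Bscr,\mu)$; hence $y\in L^2(E,\Bscr,\mu;S^*)$ and $y\in\tilde A(x)$, so $\tilde A(x)\neq\emptyset$.

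Monotonicity and convexity then follow at once from their pointwise versions: for $y_i\in\tilde A(x_i)$ the integrand $\dualdel{S}{y_1(u)-y_2(u)}{x_1(u)-x_2(u)}$ is non-negative $\mu$-a.e., so $\tilde A$ is monotone, and each fiber $A(u,x(u))$ is convex as the value of a maximal monotone operator (cf.\ \cite[Ch.\ 3, Proposition 1.14]{HuPapa1}), whence $\tilde A(x)$ is convex. For weak compactness, the growth bound (3) shows $\tilde A(x)$ is bounded in the Hilbert space $L^2(E,\Bscr,\mu;S^*)$, while Proposition \ref{prop:graphclosed} applied with the constant sequence $x_n\equiv x$ shows $\tilde A(x)$ is weakly sequentially closed, in particular norm-closed; a bounded, closed, convex subset of a Hilbert space is weakly compact.

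Finally, for weak upper hemicontinuity I would instead prove the stronger strongly-to-weakly upper semicontinuity, which implies it by the remark preceding Lemma \ref{lem:hemi}. Suppose $x_n\to x$ strongly in $L^2(E,\Bscr,\mu;S)$ and let $V$ be weakly open with $\tilde A(x)\subset V$. If $\tilde A(x_n)\not\subset V$ along a subsequence, choose $y_n\in\tilde A(x_n)\setminus V$; by (3) the $y_n$ are bounded, so a further subsequence satisfies $y_{n_k}\rightharpoonup\eta$ weakly in $L^2(E,\Bscr,\mu;S^*)$. Proposition \ref{prop:graphclosed} forces $\eta\in\tilde A(x)\subset V$, whereas $\eta$ lies in the weakly closed set $V^c$ as a weak limit of points of $V^c$, a contradiction; hence $\tilde A(x_n)\subset V$ eventually. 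With all hypotheses of Lemma \ref{lem:hemi}(ii) verified, that lemma yields the maximal monotonicity of $\tilde A$. The only delicate point throughout is the superpositional measurability underlying the selection argument; everything else is a routine transfer of pointwise properties through the integral, powered by Proposition \ref{prop:graphclosed}.
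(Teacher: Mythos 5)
Your proposal is correct and follows essentially the same route as the paper's proof: both verify the hypotheses of Lemma \ref{lem:hemi} by obtaining non-emptiness from Zygmunt's superpositional measurability results together with a measurable selection and the growth bound (3), deducing monotonicity and convexity fiberwise, getting weak compactness from boundedness plus the weak closedness supplied by Proposition \ref{prop:graphclosed}, and establishing upper hemicontinuity by the same Banach--Alaoglu compactness and graph-closedness contradiction (you prove the slightly stronger strongly-to-weakly upper semicontinuity, which the paper's preliminary remark notes implies weak upper hemicontinuity). The only cosmetic differences are your explicit appeal to the Kuratowski--Ryll-Nardzewski theorem, which the paper leaves implicit, and your use of Mazur/Kakutani in place of a direct Banach--Alaoglu argument for weak compactness.
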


The above conditions mimic those in \cite[cond. H(A)]{Papa1}, except for an additional coercivity condition that is assumed therein.

\begin{proof}[Proof of Proposition \ref{prop:max_monotone}]
  Compare with \cite[Proof of Lemma 3.3]{Papa1}. By condition (2) and \cite[Ch. 3, Theorem 1.28]{HuPapa1} $x \mapsto A(u,x)$ is strongly-to-weakly upper semi-continuous for all $u \in E$. By condition (1) and \cite[Theorem 1, Theorem 2]{Zyg} the map $u \mapsto A(u,x(u))$ is $\Bscr$-measurable and non-empty valued for all $x \in L^2(E,\Bscr,\mu;S)$. Condition (3) implies that each measurable selection $y \in A(\cdot,x(\cdot))$ is contained in $L^2(E,\Bscr,\mu;S^\ast)$, thus $\tilde A(x)$ is non-empty valued. Because of condition (2), $\tilde{A}$ is monotone. We are left to prove that $\tilde A$ is weakly upper hemicontinuous and has weakly compact, convex values in $L^2(E,\Bscr,\mu;S^\ast)$.

  We note that by condition (2) and Lemma \ref{lem:hemi}, for all $u \in E$ every $x \in L^2(E,\Bscr,\mu;S)$, $A(u,x(u))$ is a non-empty, convex and weakly compact set in $2^{S^\ast}$. It is easy to see that $A(\cdot,x(\cdot))$ is a convex subset of $L^2(E,\Bscr,\mu;S^\ast)$. As in the proof of Proposition \ref{prop:graphclosed} we see that $A(\cdot,x(\cdot))$ is weakly closed in $L^2(E,\Bscr,\mu;S^\ast)$. Weak compactness follows from condition (3) and the Banach-Alaoglu theorem.

  Now, let $x,v \in L^2(E,\Bscr,\mu;S)$, $\lambda\in [0,1]$, $V$ be a weakly open set such that $\tilde{A}(x+\lambda v) \subset V$ and $\{\lambda_n\}\subset [0,1]$ such that $\lambda_n\to\lambda$. Clearly, $x+\lambda_n v \to x+\lambda v$ strongly in $L^2(E,\Bscr,\mu;S)$. Suppose that ``$\tilde A(x+\lambda_n v)\subset V$ for large $n$'' is not valid. Then there exists a subsequence $y_{n_k} \in \tilde{A}(x+\lambda_{n_k} v)$ such that $y_{n_k} \not\in V$. Passing to a further subsequence, by condition (3) and the Banach-Alaoglu theorem, we may assume that $y_{n_k} \rightharpoonup \eta$ weakly for some $\eta\in L^2(E,\Bscr,\mu;S^\ast)$. By Proposition \ref{prop:graphclosed}, $\eta\in \tilde{A}(x+\lambda v)\subset V$ which gives a contradiction, since $V$ was assumed to be weakly open. Hence $\tilde A$ is weakly upper hemicontinuous and the proof is completed by Lemma \ref{lem:hemi}.
\end{proof}

\section{Comparison of different kinds of solutions}\label{SVIapp}

In \cite{BDPR,BDPR12,BR12} another approach for multi-valued singular stochastic evolution
equations was developed by introducing solutions in the sense of \emph{stochastic (evolution) variational inequalities} (SVI). In the following, we shall compare our notion of a solution in the sense of
Definitions  \ref{def:soln_mult}, \ref{def:stoch_soln_limit} with such SVI solutions (see also \cite[Remark 3.4]{BR12}).

Assume that there exists a proper, l.s.c., convex function $\phi:H\to [0,+\infty]$ such that
$\phi_S:=\phi\rs{S}$ is finite-valued on $S$ and admits a subdifferential $\partial\phi_S:S\to 2^{S^\ast}$
with $\partial\phi_S=A$. Assume also that
\begin{equation}\label{eq:quadradicgrowthinS}\phi_S(x)\le C(1+\norm{x}_S^2)\end{equation}
for some constant $C\ge 0$.

Let  $(\Omega,\hat\Fscr,\{\hat\Fscr_t\}_{t\ge 0},\P)$ be a normal filtered probability space.
Consider the following SPDE in $H$
\begin{equation}
\begin{split}\label{eq:SVI-SPDE}
dX_t+\partial\phi(X_t)\,dt&\ni B_t(X_t)\,dW_t, \quad t>0,\\
X_0&=x\in H,\end{split}\end{equation}
here, $W_t$ is a cylindrical Wiener process in $U$, where $U$ is some
separable Hilbert space, and $B:[0,T]\times\Omega\times H\to L_2(U,H)$ is assumed
to be $\{\hat\Fscr_t\}$-progressively measurable.

\begin{defi}\label{def:varn_soln}
Let $0<T<\infty$ and let $x\in H$. A stochastic process $X:[0,T]\times\Omega\to H$
is said to be an \emph{SVI solution} to \eqref{eq:SVI-SPDE} if the following conditions
hold.
\begin{enumerate}[(i)]
\item $X$ is $\{\hat\Fscr_t\}$-adapted, $X$ has $\P$-a.s.\ continuous sample paths in $H$ and $X_0=x$.
\item $X\in L^2([0,T]\times\Omega;H)$, $\phi(X)\in L^1([0,T]\times\Omega)$.
\item For all  $\{\hat\Fscr_t\}$-progressively measurable processes $G\in L^2([0,T]\times\Omega;H)$
and all $\{\hat\Fscr_t\}$-adapted $H$-valued processes $Z$ with $\P$-a.s.\ continuous sample paths
such that $Z\in L^2([0,T]\times\Omega;S)$ and $Z$ solving the equation
\[Z_t-Z_0+\int_0^t G_s\,ds=\int_0^t B_s(Z_s)\,dW_s,\quad t\in [0,T],\]
we have
\begin{equation}\begin{split}\label{eqn:var_ineq}
&\frac{1}{2}\E\norm{X_t-Z_t}_H^2+\E\int_0^t\phi(X_\tau)\,d\tau\\
&\le\frac{1}{2}\E\norm{x-Z_0}_H^2+\E\int_0^t\phi(Z_\tau)\,d\tau
+\E\int_0^t(X_\tau-Z_\tau,G_\tau)_H\,d\tau\\
&+\frac{1}{2}\E\int_0^t\norm{B_\tau(X_\tau)-B_\tau(Z_\tau)}_{L_2(U,H)}^2\,d\tau,\quad t\in [0,T].
\end{split}\end{equation}
There is at least one such pair $(G,Z)$.
\end{enumerate}
\end{defi}

Note that for additive noise, i.e.\ when $B$ does not depend on the position of $X$ in $H$,
the last term in the variational inequality introduced above vanishes.

\begin{rem}
If $B$ satisfies Hypothesis \ref{hyp:noise} and Hypothesis \ref{hyp:noise_2} then there exists at least one pair $(G,Z)$ satisfying the requirements of (iii) in the above Definition.
This follows (with $G\equiv 0$) e.g.\ from Theorem \ref{thm:unique_ex_stoch_mult}.
\end{rem}

\begin{prop}
Let $X$ be a solution (resp.\ limit solution) to \eqref{eq:SVI-SPDE} in the sense of Definition \ref{def:soln_mult}  (Definition \ref{def:stoch_soln_limit} resp.). Assume that $A$ satisfies Hypothesis \ref{hyp:A} and $B$ satisfies Hypothesis \ref{hyp:noise} and Hypothesis \ref{hyp:noise_2}. Then $X$ is also an SVI solution
to \eqref{eq:SVI-SPDE}.
\end{prop}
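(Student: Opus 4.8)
The plan is to verify conditions (i)--(iii) of Definition \ref{def:varn_soln} directly from the defining properties of the solution. Conditions (i) and (ii) are essentially immediate: a solution $X$ in the sense of Definition \ref{def:soln_mult} is by construction $\{\hat\Fscr_t\}$-adapted with $\P$-a.s.\ continuous paths in $H$, satisfies $X_0=x$, and lies in $L^2(\Omega;C([0,T];H))\cap L^2([0,T]\times\Omega;S)$; the membership $\phi(X)\in L^1([0,T]\times\Omega)$ then follows from the quadratic growth bound \eqref{eq:quadradicgrowthinS} together with $X\in L^2([0,T]\times\Omega;S)$. The required existence of at least one admissible pair $(G,Z)$ is supplied by the Remark preceding the statement, taking $G\equiv 0$ and $Z$ the solution furnished by Theorem \ref{thm:unique_ex_stoch_mult}.

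The heart of the matter is the variational inequality \eqref{eqn:var_ineq} in (iii), which I would first establish for a genuine solution $X$. Writing the integral equations for $X$ and for the test process $Z$ in $S^\ast$, the difference $X-Z$ lies in $L^2([0,T]\times\Omega;S)$ with drift $-(\eta-G)\in L^2([0,T]\times\Omega;S^\ast)$ (using (A2) with $\E\sup_t\|X_t\|_S^2<\infty$, and $G\in L^2([0,T]\times\Omega;H)\hookrightarrow L^2([0,T]\times\Omega;S^\ast)$) and diffusion $B(X)-B(Z)\in L^2([0,T]\times\Omega;L_2(U,H))$ by (B1). Hence the It\=o formula for $\|\cdot\|_H^2$ in the Gelfand triple $S\subset H\subset S^\ast$ applies (cf.\ \cite[Theorem 4.2.5]{PrRoe}), expressing $\|X_t-Z_t\|_H^2$ with a mean-zero stochastic integral. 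Taking expectations and invoking the subgradient inequality $\dualdel{S}{\eta_s}{X_s-Z_s}\ge\phi(X_s)-\phi(Z_s)$ (valid since $\eta_s\in\partial\phi_S(X_s)$ and $X_s,Z_s\in S$), while identifying $\dualdel{S}{G_s}{X_s-Z_s}=(G_s,X_s-Z_s)_H$ because $G_s\in H$, yields exactly \eqref{eqn:var_ineq} after dividing by two and rearranging.

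For a limit solution in the sense of Definition \ref{def:stoch_soln_limit} I would argue by approximation: each approximating solution $X^\delta$ (driven by $x^\delta\to x$ and $B^\delta\to B$) is a genuine solution, so the previous paragraph applies verbatim to $X^\delta-Z$, producing \eqref{eqn:var_ineq} with $X$ replaced by $X^\delta$ and the noise term $\|B^\delta(X^\delta_\tau)-B(Z_\tau)\|_{L_2(U,H)}^2$. Letting $\delta\to 0$, the terms $\tfrac12\E\|X^\delta_t-Z_t\|_H^2$, $\tfrac12\E\|x^\delta-Z_0\|_H^2$ and $\E\int_0^t(X^\delta_\tau-Z_\tau,G_\tau)_H\,d\tau$ converge by the $L^2(\Omega;C([0,T];H))$-convergence $X^\delta\to X$, while the noise term converges via the splitting $B^\delta(X^\delta)-B(X)=(B^\delta(X^\delta)-B^\delta(X))+(B^\delta(X)-B(X))$, the first summand controlled by the uniform Lipschitz bound (B2) and the second vanishing in $L^2$ by dominated convergence from $B^\delta\to B$. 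The main obstacle is the left-hand potential term, for which only one-sided control is available: here I would pass to a subsequence with $X^\delta_\tau\to X_\tau$ in $H$ for a.e.\ $(\tau,\omega)$ and combine lower semicontinuity of $\phi\colon H\to[0,+\infty]$ with Fatou's lemma to obtain $\E\int_0^t\phi(X_\tau)\,d\tau\le\liminf_\delta\E\int_0^t\phi(X^\delta_\tau)\,d\tau$, so that taking $\liminf_\delta$ of the approximate inequalities preserves \eqref{eqn:var_ineq} in the limit.
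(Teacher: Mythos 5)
Your treatment of genuine solutions (Definition \ref{def:soln_mult}) is correct and coincides with the paper's argument: It\^o's formula in the triple $S\subset H\subset S^\ast$ applied to $X-Z$, taking expectations to kill the stochastic integral, then the subgradient inequality $\dualdel{S}{\eta_\tau}{X_\tau-Z_\tau}\ge\phi_S(X_\tau)-\phi_S(Z_\tau)$ together with $\dualdel{S}{G_\tau}{X_\tau-Z_\tau}=(G_\tau,X_\tau-Z_\tau)_H$. (One small inaccuracy: you invoke $\E\sup_t\|X_t\|_S^2<\infty$, which is part of Theorem \ref{thm:unique_ex_stoch_mult} but not of Definition \ref{def:soln_mult}; it is also unnecessary, since $X\in L^2([0,T]\times\Omega;S)$ plus (A2) already gives $\eta\in L^2([0,T]\times\Omega;S^\ast)$.)

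The gap is in the limit-solution case. You work with an arbitrary approximating pair $(x^\delta,B^\delta)$ and then need $B^\delta(X)\to B(X)$ in $L^2([0,T]\times\Omega;L_2(U,H))$, which you justify by ``dominated convergence from $B^\delta\to B$''. This does not follow from Definition \ref{def:stoch_soln_limit}: the convergence $B^\delta(u)\to B(u)$ is assumed only for each \emph{fixed} $u\in S$, whereas the limit solution $X$ is merely $H$-valued (this is precisely why one passes to limit solutions), so there is no a.e.\ pointwise convergence of $B^\delta(X_\tau)$ to dominate. Moreover, each $B^\delta$ satisfies (B2) with its own constant, which need not be uniform in $\delta$, so even your first summand $B^\delta(X^\delta)-B^\delta(X)$ is not controlled. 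The repair is to observe that verifying the SVI property for $X$ requires only \emph{one} admissible approximation: since the proposition assumes $B$ itself satisfies Hypotheses \ref{hyp:noise} and \ref{hyp:noise_2}, take $B^\delta\equiv B$ and approximate only the initial datum by $x^\delta\in L^2(\Omega,\hat\Fscr_0;S)$ with $x^\delta\to x$ in $L^2(\Omega,\hat\Fscr_0;H)$; by the definition of limit solution, the corresponding genuine solutions converge to $X$ in $L^2(\Omega;C([0,T];H))$. This is exactly the paper's route, and your final step (a.e.\ convergent subsequence, lower semicontinuity of $\phi$ on $H$, Fatou) then closes the argument. Finally, for limit solutions condition (ii) of Definition \ref{def:varn_soln}, i.e.\ $\phi(X)\in L^1([0,T]\times\Omega)$, is not covered by your first paragraph (a limit solution need not lie in $L^2([0,T]\times\Omega;S)$); it should be recorded that this follows a posteriori from the limiting inequality \eqref{eqn:var_ineq} applied to a single admissible pair $(G,Z)$, as the paper notes.
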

\begin{proof}
We first consider the case of $X$ being a solution in the sense of Definition \ref{def:soln_mult}. Then Definition \ref{def:varn_soln}, (i) is satisfied by assumption. Since $A$ satisfies Hypothesis \ref{hyp:A} also  Definition \ref{def:varn_soln}, (ii) is satisfied.
Let $(G,Z)$ be as in Definition \ref{def:varn_soln}. Using It\={o}'s formula \cite[Theorem 4.2.5]{PrRoe} for the $S^\ast$-valued process
\[X_t-Z_t=x-Z_0+\int_0^t (G_\tau-\eta_\tau)\,d\tau+\int_0^t (B_\tau(X_\tau)-B_\tau(Z_\tau))\,dW_\tau\]
and taking expectation yields
\begin{align*}
&\frac{1}{2}\E\norm{X_t-Z_t}^2_H+\E\int_0^t\dualdel{S}{\eta_\tau}{X_\tau-Z_\tau}\,d\tau\\
&=\frac{1}{2}\E\norm{x-Z_0}^2_H+\frac{1}{2}\E\int_0^t\norm{B_\tau(X_\tau)-B_\tau(Z_\tau)}_{L_2(U,H)}^2\,d\tau\\
&+\E\int_0^t(X_\tau-Z_\tau,G_\tau)_H\,d\tau,\quad t\in [0,T].
\end{align*}
By the subgradient property,
\begin{align*}
&\frac{1}{2}\E\norm{X_t-Z_t}_H^2+\E\int_0^t\phi_S(X_\tau)\,d\tau\\
&\le\frac{1}{2}\E\norm{x-Z_0}_H^2+\E\int_0^t\phi_S(Z_\tau)\,d\tau+\E\int_0^t(X_\tau-Z_\tau,G_\tau)_H\,d\tau\\
&+\frac{1}{2}\E\int_0^t\norm{B_\tau(X_\tau)-B_\tau(Z_\tau)}_{L_2(U,H)}^2\,d\tau,\quad t\in [0,T].
\end{align*}
Hence, $X$ is an SVI solution in the sense of Definition \ref{def:varn_soln}.

Let now $X$ be a limit solution in the sense of Definition \ref{def:stoch_soln_limit}. Again, Definition \ref{def:varn_soln}, (i) is satisfied by assumption. Let $x_n\in S$ with $\norm{x_n-x}_H\to 0$ as $n\to\infty$ and let $X^n$ be a solution to \eqref{eq:SVI-SPDE} in the sense of Definition \ref{def:soln_mult} starting in $x_n$. By Definition \ref{def:stoch_soln_limit} we have
   $$X^n \to X,\quad \text{in } L^2(\Omega;C([0,T];H)).$$
We have already seen that $X^n$ is an SVI solution and thus satisfies \eqref{eqn:var_ineq}. Taking the limit $n\to\infty$ in \eqref{eqn:var_ineq} and using  l.s.c.\ of $\varphi$ on $H$ we see that Definition \ref{def:varn_soln}, (iii) is satisfied by $X$. Definition \ref{def:varn_soln}, (ii)  then follows by choosing one such $(G,Z)$ which implies $\varphi(X) \in L^1([0,T]\times\Omega)$. 
\end{proof}

\begin{rem}\label{rem:compare_solutions}
\begin{enumerate}
 \item  Assume that $A$ satisfies Hypothesis \ref{hyp:A} and $B$ satisfies Hypothesis \ref{hyp:noise} and Hypothesis \ref{hyp:noise_2}. If uniqueness of SVI solutions to \eqref{eq:SVI-SPDE} can be proven then the unique limit solution $X$ coincides with the unique SVI solution.
 \item Uniqueness for SVI solutions has been verified for the stochastic $1$-Laplace equation with additive (cf.\ \cite{BDPR,BDPR12}) and linear multiplicative noise (cf.\ \cite{BR12}). Hypothesis \ref{hyp:A} has been proven for the $1$-Laplacian in Section \ref{dirichlet_section}. For additive noise, \cite[condition (1.3)]{BDPR} implies Hypothesis \ref{hyp:noise_2} while Hypothesis \ref{hyp:noise} is trivially satisfied. For linear multiplicative noise, \cite[conditions (H1) \& (H2)]{BR12} and the structure of the noise imply Hypothesis \ref{hyp:noise_2} and Hypothesis \ref{hyp:noise} (cf.\ \cite[Remark 2.1]{BR12} for details).
\end{enumerate}
\end{rem}

\section{\texorpdfstring{$W^{1,1}_0$}{W\_0\^\{1,1\} }-dissipativity of the Dirichlet Laplacian}

The main result \ref{prop:brezis} of this appendix is credited to H. Br\'ezis. For a detailed proof of
the results we refer to \cite[Section 8, Appendix 1]{BR12}.

\begin{prop}\label{prop:brezis}
Let $\Lambda$ be a bounded, convex domain of $\R^d$, $d\ge 1$, with piecewise smooth
boundary $\partial\Lambda$ of class $C^2$. Let $J_n=(1-\frac{\Delta}{n})^{-1}$, $n\in\N$, be the resolvent
of the Dirichlet Laplacian $(-\Delta,D(-\Delta))$, where $D(-\Delta)=H^{1}_0(\Lambda)\cap H^2(\Lambda)$. Then
\[\int_\Lambda\abs{\nabla J_n(u)}\,d\xi\le\int_\Lambda\abs{\nabla u}\,d\xi,\quad\forall u\in W^{1,1}_0(\Lambda),\;n\in\N.\]
\end{prop}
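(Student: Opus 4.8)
The plan is to differentiate the resolvent equation and reduce the claim to a pointwise differential inequality whose boundary contribution is controlled by the (nonnegative) mean curvature of $\partial\Lambda$. Write $\lambda:=1/n$ and $v:=J_n(u)$, so that $v-\lambda\Delta v=u$ with $v=0$ on $\partial\Lambda$. By density of smooth functions in $W^{1,1}_0(\Lambda)$ and continuity of $J_n$, it suffices to treat smooth $u$ with $u=0$ on $\partial\Lambda$; elliptic regularity then gives $v$ smooth up to the boundary, and crucially \emph{both} $v$ and $u$ vanish on $\partial\Lambda$. Differentiating the equation shows that the vector field $\mathbf{w}:=\nabla v$ solves the componentwise system $\mathbf{w}-\lambda\Delta\mathbf{w}=\nabla u$.

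First I would test this system against the regularized direction field $\mathbf{w}/\abs{\mathbf{w}}_\eps$, where $\abs{\cdot}_\eps:=\sqrt{\abs{\cdot}^2+\eps^2}$, and integrate over $\Lambda$. The zeroth-order term gives $\int_\Lambda \abs{\mathbf{w}}^2/\abs{\mathbf{w}}_\eps$, which increases monotonically to $\int_\Lambda\abs{\nabla v}$ as $\eps\to0$, while the right-hand side is bounded by $\int_\Lambda\abs{\nabla u}$ because $\abs{\mathbf{w}/\abs{\mathbf{w}}_\eps}\le1$. Thus the proof reduces to showing $-\lambda\int_\Lambda\Delta\mathbf{w}\cdot\mathbf{w}/\abs{\mathbf{w}}_\eps\ge0$. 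Integrating by parts,
\[
 -\lambda\int_\Lambda\Delta\mathbf{w}\cdot\frac{\mathbf{w}}{\abs{\mathbf{w}}_\eps}
 =\lambda\int_\Lambda\sum_i\nabla w_i\cdot\nabla\Big(\frac{w_i}{\abs{\mathbf{w}}_\eps}\Big)
 -\lambda\int_{\partial\Lambda}\frac{\mathbf{w}}{\abs{\mathbf{w}}_\eps}\cdot\partial_\nu\mathbf{w}\,dS .
\]
A short computation shows the interior integrand equals $(\abs{\nabla\mathbf{w}}^2-\abs{\nabla\abs{\mathbf{w}}_\eps}^2)/\abs{\mathbf{w}}_\eps\ge0$ (the Kato-type convexity inequality for $\mathbf{w}\mapsto\abs{\mathbf{w}}_\eps$), so only the boundary term must be handled.

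The boundary term is the crux, and it is precisely where convexity enters. Since $v=0$ on $\partial\Lambda$, its tangential gradient vanishes and $\mathbf{w}=(\partial_\nu v)\,\nu$ is purely normal there. Because $u=0$ on $\partial\Lambda$ as well, the equation gives $\Delta v=n(v-u)=0$ on $\partial\Lambda$; combined with the boundary decomposition $\Delta v=\partial^2_{\nu\nu}v+\kappa\,\partial_\nu v$, where $\kappa$ is the sum of the principal curvatures of $\partial\Lambda$, this forces $\partial^2_{\nu\nu}v=-\kappa\,\partial_\nu v$ there. Substituting into the boundary integrand yields
\[
 \frac{\mathbf{w}}{\abs{\mathbf{w}}_\eps}\cdot\partial_\nu\mathbf{w}
 =\frac{\partial_\nu v}{\sqrt{(\partial_\nu v)^2+\eps^2}}\,\partial^2_{\nu\nu}v
 =-\kappa\,\frac{(\partial_\nu v)^2}{\sqrt{(\partial_\nu v)^2+\eps^2}}\le0,
\]
the last inequality because $\kappa\ge0$ for a convex domain. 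Hence the boundary contribution to $-\lambda\int_\Lambda\Delta\mathbf{w}\cdot\mathbf{w}/\abs{\mathbf{w}}_\eps$ is nonnegative, the whole quantity is nonnegative, and letting $\eps\to0$ gives $\int_\Lambda\abs{\nabla v}\le\int_\Lambda\abs{\nabla u}$.

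I expect the main obstacle to be a rigorous justification of this boundary computation: the decomposition $\Delta v=\partial^2_{\nu\nu}v+\kappa\,\partial_\nu v$ and the integrations by parts must be carried out near a boundary that is only piecewise $C^2$, which requires localizing away from its singular set and approximating $\Lambda$ by smooth convex domains, or a careful limiting argument. As an alternative route that sidesteps some of these issues, one may first prove the estimate for the Dirichlet heat semigroup $e^{t\Delta}$ by the same differential inequality, monitoring $\frac{d}{dt}\int_\Lambda\abs{\nabla e^{t\Delta}u}\le0$, and then deduce the claim for $J_n$ from the subordination identity $J_n=\int_0^\infty e^{-t}\,e^{(t/n)\Delta}\,dt$ together with the triangle inequality.
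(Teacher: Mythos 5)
The paper does not actually prove Proposition \ref{prop:brezis}: its ``proof'' is the single line ``See \cite[Proposition 8.1, Remark 8.1]{BR12}'', with the argument credited to H.~Br\'ezis. Your proposal is, in substance, a correct reconstruction of that cited argument, and its core chain is sound: testing $\mathbf{w}-\lambda\Delta\mathbf{w}=\nabla u$ (with $\mathbf{w}=\nabla v$, $v=J_n u$) against $\mathbf{w}/\abs{\mathbf{w}}_\eps$, the interior term is nonnegative by the Kato-type inequality $\abs{\nabla\abs{\mathbf{w}}_\eps}\le\abs{\nabla\mathbf{w}}$, while on the boundary, for $v$ vanishing there, one has $\nabla v=(\partial_\nu v)\nu$ and $\Delta v=\partial^2_{\nu\nu}v+\kappa\,\partial_\nu v$ (the Laplace--Beltrami term drops), so that reducing to $u\in C_c^\infty(\Lambda)$ --- legitimate, as you note --- forces $\Delta v=0$ on $\partial\Lambda$ and makes the boundary integrand equal to $-\kappa(\partial_\nu v)^2/\abs{\mathbf{w}}_\eps\le 0$ by convexity ($\kappa\ge 0$). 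Your two caveats are precisely the technical content of the cited Remark 8.1 and should not be understated: the piecewise-$C^2$ boundary must be handled by approximating $\Lambda$ from inside by smooth convex domains, since elliptic regularity up to the boundary fails at corners (convexity alone only yields $H^2$), and the passage from $C_c^\infty(\Lambda)$ to all of $W^{1,1}_0(\Lambda)$ requires, besides density, the $L^1$-contractivity of $J_n$ --- needed even to define $J_n u$ when $d\ge 3$, where $W^{1,1}_0(\Lambda)\not\subset L^2(\Lambda)$ --- together with lower semicontinuity of the total variation under $L^1$-convergence; ``continuity of $J_n$'' by itself is not quite the right invocation. Finally, your alternative route (the same differential inequality for the Dirichlet heat semigroup, noting that $\Delta v_t=\partial_t v_t=0$ on $\partial\Lambda$ for the Dirichlet flow, followed by the subordination formula $J_n=\int_0^\infty e^{-t}e^{(t/n)\Delta}\,dt$ and Minkowski's integral inequality) is also viable, and it parallels how the paper itself handles the Neumann case in Section \ref{sect:ex}, where the Trotter formula and Wang--Yan gradient estimates play the corresponding role.
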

\begin{proof}
See \cite[Proposition 8.1, Remark 8.1]{BR12}.
\end{proof}

\begin{prop}\label{prop:quadraticgrowthcontraction}
Let $\Lambda$ be a bounded, convex domain of $\R^d$, $d\ge 1$, with piecewise smooth
boundary $\partial\Lambda$ of class $C^2$. Let $J_n=(1-\frac{\Delta}{n})^{-1}$, $n\in\N$, be the resolvent
of the Dirichlet Laplacian $(-\Delta,D(-\Delta))$, where $D(-\Delta)=H^{1}_0(\Lambda)\cap H^2(\Lambda)$.
Let $\tilde{\Psi}:\R\to [0,\infty)$ be a continuous, even and convex function of
at most quadratic growth such that $\tilde{\Psi}(0)=0$. Then
\[\int_\Lambda\tilde{\Psi}\left(\abs{\nabla J_n(u)}\right)\,d\xi\le\int_\Lambda\tilde{\Psi}\left(\abs{\nabla u}\right)\,d\xi,\quad\forall u\in H^{1}_0(\Lambda),\;n\in\N.\]
\end{prop}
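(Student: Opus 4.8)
The plan is to follow Br\'ezis's proof of Proposition \ref{prop:brezis}, which is exactly the case $\tilde\Psi(r)=\abs{r}$, but with the general convex integrand $F(p):=\tilde\Psi(\abs{p})$, $p\in\R^d$, in place of $\abs{\cdot}$. First I would reduce to a regular situation by a double approximation. Replacing $\tilde\Psi$ by $\tilde\Psi_\delta:=\rho_\delta\ast\tilde\Psi-(\rho_\delta\ast\tilde\Psi)(0)+\delta(\cdot)^2$ (a mollification plus a quadratic perturbation) yields smooth, even, uniformly convex functions of at most quadratic growth with $\tilde\Psi_\delta(0)=0$ and $\tilde\Psi_\delta\to\tilde\Psi$ pointwise. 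Likewise I would approximate $u\in H^1_0(\Lambda)$ by $u\in C_c^\infty(\Lambda)$, so that $v:=J_n u\in H^1_0(\Lambda)\cap H^2(\Lambda)$ inherits the extra regularity afforded by the $C^2$ boundary. Since $J_n$ commutes with $-\Delta$ and is a spectral contraction, one has $\norm{\nabla v}_{L^2}\le\norm{\nabla u}_{L^2}$; together with the bound $\tilde\Psi_\delta(r)\le C(1+r^2)$ this furnishes an $L^1$-dominating function, so that both sides pass to the limit by dominated convergence and the general statement follows from the smooth, uniformly convex case.

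In the regular case set $a:=\nabla_pF(\nabla v)$ and start from the differentiated resolvent equation: writing $nv-\Delta v=nu$ in $\Lambda$ with $v=0$ on $\partial\Lambda$, differentiation in the $j$-th direction gives $n\,\partial_j v-\Delta\partial_j v=n\,\partial_j u$. Testing with $a_j=\partial_{p_j}F(\nabla v)$, summing over $j$ and integrating over $\Lambda$ yields
\[
n\int_\Lambda\bracket{a,\nabla v}\,d\xi+\int_\Lambda\sum_{j,k,l}F_{p_jp_l}(\nabla v)\,\partial_k\partial_l v\,\partial_k\partial_j v\,d\xi+(\text{boundary})=n\int_\Lambda\bracket{a,\nabla u}\,d\xi.
\]
The bulk Hessian term is nonnegative, since convexity of $F$ makes $(F_{p_jp_l})$ positive semidefinite and it is contracted against the Gram matrix $\sum_k(\partial_k\partial_l v)(\partial_k\partial_j v)$. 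Dropping it (and discarding the boundary term, once its sign is controlled) leaves $\int_\Lambda\bracket{a,\nabla v}\,d\xi\le\int_\Lambda\bracket{a,\nabla u}\,d\xi$. Finally, the subgradient inequality for the convex $F$, namely $\bracket{\nabla_pF(\nabla v),\nabla u-\nabla v}\le F(\nabla u)-F(\nabla v)$, combined with the previous line gives $\int_\Lambda F(\nabla v)\,d\xi\le\int_\Lambda F(\nabla u)\,d\xi$, which is the assertion.

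The main obstacle is the boundary term generated by the integration by parts, $-\int_{\partial\Lambda}\sum_j a_j\,\partial_\nu(\partial_j v)\,d\Hscr^{d-1}$ (with $\nu$ the outer normal). This is exactly where the convexity of $\Lambda$ is indispensable: on the Dirichlet boundary the tangential derivatives of $v$ vanish, so $\nabla v$ is normal to $\partial\Lambda$ and the vector $a$ is parallel to $\nu$; the boundary integrand can then be recast in terms of the second fundamental form of $\partial\Lambda$, which is nonnegative for a convex domain and forces the favourable sign, so that the boundary contribution may be discarded. This geometric input is precisely what underlies Br\'ezis's Proposition \ref{prop:brezis}, and carrying it out rigorously—together with justifying the above formal manipulations for $v\in H^2(\Lambda)$—is the only delicate step; the detailed computation is the one reproduced in \cite[Section 8, Appendix 1]{BR12}.
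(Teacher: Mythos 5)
Correct, and essentially the same approach: the paper's own ``proof'' consists of nothing but the citation of \cite[Proposition 8.2]{BR12}, and your outline faithfully reconstructs the Br\'ezis-type argument behind that reference --- the differentiated resolvent equation tested with $\nabla_p F(\nabla v)$, nonnegativity of the Hessian bulk term by convexity of $F$, the boundary term's sign coming from convexity of $\Lambda$ via the second fundamental form, and the conclusion via the subgradient inequality. Since you defer the delicate rigorous justifications (regularity of $v=J_nu$, validity of the integration by parts, the piecewise-smooth boundary) to \cite[Section 8, Appendix 1]{BR12}, exactly as the paper itself does, there is nothing to flag.
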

\begin{proof}
\cite[Proposition 8.2]{BR12}.
\end{proof}

\providecommand{\bysame}{\leavevmode\hbox to3em{\hrulefill}\thinspace}
\providecommand{\MR}{\relax\ifhmode\unskip\space\fi MR }
\providecommand{\MRhref}[2]{%
  \href{http://www.ams.org/mathscinet-getitem?mr=#1}{#2}
}
\providecommand{\href}[2]{#2}

\end{document}